\newcommand{\eps}{\varepsilon}
\newcommand{\di}{\nabla \cdot }
\newcommand{\Tm}{T_{\max}}
\newcommand{\R}{\mathbb R}
\renewcommand{\epsilon}{\varepsilon}
\newcommand{\LO}[1]{L^{#1}(\Omega)}
\newcommand{\intO}{\int_{\Omega}}
\newcommand{\intQTm}{\iint_{ Q_{\Tm}}}
\newcommand{\intQt}{\iint_{Q_{t}}}
\newcommand{\intQT}{\iint_{Q_{T}}}
\newtheorem{theorem}{{\bf Theorem}}[section]
\theoremstyle{definition} \newtheorem{definition}{\bf Definition}[section]
\theoremstyle{assumption} \newtheorem{assumption}{\bf Assumption}[section]
\theoremstyle{plain}
\newtheorem{lemma}{Lemma}[section]
\newtheorem{remark}{Remark}[section]
\title{\bf \Large Rigorous fast signal diffusion limit and  convergence rates with the initial layer effect in a competitive chemotaxis system}
\author{Cordula Reisch$^{a,b}$\footnote{Email: c.reisch@tu-bs.de}, Bao-Ngoc Tran$^{b,c}$\footnote{Email: bao-ngoc.tran@uni-graz.at (Corresponding author)}, Juan Yang$^{d,e}$\footnote{Email: yangjuan12@sjtu.edu.cn}}
\date{}  
\begin{document}

\maketitle

\vspace*{-1cm}

\begin{center}
{\small
$^{a}$Institute for Partial Differential Equations, Technische Universit\"at Braunschweig, \\ 38106 Braunschweig,  Germany \vspace{0.15cm} \\
$^{b}$Department of Mathematics and Scientific Computing, University of Graz, \\ Heinrichstrasse 36, 8010 Graz, Austria \vspace{0.15cm}\\ 
$^{c}$Department of Mathematics, Faculty of Science, Nong Lam University,    Ho Chi Minh City, Vietnam \vspace{0.15cm}\\
$^{d}$School of Mathematical Sciences, Shanghai Jiao Tong University, Shanghai 200240, China \vspace{0.15cm}\\
$^{e}$School of Mathematics and Statistics, Lanzhou University, Lanzhou, 730000, China 
}
\end{center}

\begin{abstract} We study a chemotaxis system that includes two competitive prey and one predator species in a two-dimensional domain, where the movement of prey (resp. predators) is driven by chemicals secreted by predators (resp. prey), called mutually repulsive (resp. mutually attractive) chemotactic effect. The kinetics for all species are chosen according to the competitive Lotka–Volterra equations for prey and to a Holling type functional response for the predator.  Under the biologically relevant scenario that the chemicals diffuse much faster than the individual diffusion of all species and a suitable re-scaling, equations for chemical concentrations are parabolic with slow evolution depending on the relaxation time $0<\varepsilon\ll 1$. The first main result shows the global existence of a unique classical solution to the system for each~$\varepsilon$.
Second, we study rigorously the so-called fast signal diffusion limit, passing from the system including parabolic equations with the slow evolution of the chemical concentrations to elliptic equations for the chemical concentrations, i.e. the limit as $\varepsilon \to 0$. This explains why elliptic equations can be proposed for chemical concentration instead of parabolic ones with slow evolution. Third, the $L^\infty$-in-time  convergence rates for the fast signal diffusion limit are estimated, where the effect of the initial layer is carefully treated. Finally, the differences between the systems with and without the slow evolution, and between the systems with one or two prey, as well as their dynamics, are discussed numerically.        \\

\noindent \textbf{Keywords}: Competitive chemotaxis system,  Fast signal diffusion limit, $L^\infty$-in-time convergence rate, Singular limits, Bootstrap arguments. \\

\noindent \textbf{2020 Mathematics Subject Classification}: 35B25, 35A01, 35Q92, 92C17.

\end{abstract}

\tableofcontents
 
\section{Introduction}\label{sec:1}

\subsection{Food chain chemotaxis model}\label{sec:1_1}

  Chemotaxis 
  is well-known and plays a crucial role in many biological systems in which the species' movement is biased along concentration gradients of chemical stimuli. In the last decade, chemotaxis systems 
have gained much attention in mathematical analysis, such as, in two dimensions,   global existence results for a 
parabolic-elliptic system 
can be found in \cite{zheng2017global}, or for a
parabolic-parabolic system 
in \cite{black2017global}. For a higher-dimensional case, the global existence was established in \cite{gnanasekaran2024global,zheng2017global} by imposing restrictive conditions on parameters. Various results regarding global existence, large-time behaviour, or blowing-up of solutions have been studied   in   \cite{tao2015boundedness,
pan2020boundedness,tu2021phenomenon,
yang2022global,gnanasekaran2024global} and references therein.  
Recently, 
a chemotaxis system including two competitive prey and one predator species has been studied in  \cite{amorim2023global} with a result on global existence and in \cite{burger2020numerical} with numerical analysis, which will be studied in this paper in the context of fast signalling.  
Specifically, we consider in $Q_\infty=\Omega \times (0,\infty)$ the system
\begin{align}
\label{System.U}
\left\{ \begin{array}{llll}
\partial_\tau \widetilde u_1  - \widetilde d_1\Delta \widetilde u_1  - \widetilde \chi_1 \di (\widetilde u_1 \nabla \widetilde v_3)  &\hspace*{-0.2cm}=\hspace*{-0.2cm}& \widetilde f_1(\widetilde u_1,\widetilde u_2,\widetilde u_3) , \vspace*{0.05cm} \\
\partial_\tau \widetilde u_2  - \widetilde d_2\Delta \widetilde u_2  - \widetilde \chi_2 \di (\widetilde u_2 \nabla \widetilde v_3) &\hspace*{-0.2cm}=\hspace*{-0.2cm}& \widetilde f_2(\widetilde u_1,\widetilde u_2,\widetilde u_3)  , \vspace*{0.05cm} \\
\partial_\tau \widetilde u_3  - \widetilde d_3\Delta \widetilde u_3  +  \sum_{i=1}^2 \widetilde \chi_{3i} \di (\widetilde u_3 \nabla \widetilde v_i )  &\hspace*{-0.2cm}=\hspace*{-0.2cm}& \widetilde f_3(\widetilde u_1,\widetilde u_2,\widetilde u_3) ,
\end{array}
\right.
\end{align}
where $\Omega\subset \mathbb{R}^2$ is a bounded domain with a sufficiently smooth boundary $\Gamma$, and at a time $\tau\ge 0$ and a position $x\in \Omega$, the functions $\widetilde u_i=\widetilde u_i(x,\tau)$, $i=1,2,$ stand for the densities of
two competitive prey species, and $\widetilde u_3=\widetilde u_3(x,\tau)$ for the density of a predator species. Besides the self-diffusion with the coefficients $\widetilde d_1,\widetilde d_2,\widetilde d_3>0$, the movement of preys (resp. predators) is also driven by chemicals secreted by predators (resp. prey) with the coefficients $\widetilde \chi_{1}, \widetilde \chi_{2}>0$ (resp. $\widetilde \chi_{31}, \widetilde \chi_{32}>0$), called chemotactic movement. The function $\widetilde v_i$ denotes the concentration of the chemical secreted by the species $\widetilde u_i$, $i=1,2,3$. The preys tend to move in the direction such that the predator concentration $\widetilde v_3$ is decreasing, described by  $\widetilde \chi_i\di(\widetilde u_i\nabla \widetilde v_3)$, $i=1,2,$   and called mutually repulsive chemotactic effects. In contrast,  the predators tend to move in a direction such that the prey concentrations are increasing, described by  $-\di (\widetilde u_3 ( \widetilde \chi_{31} \nabla \widetilde v_1 + \widetilde \chi_{32}  \nabla \widetilde v_2))$   and called mutually attractive chemotactic effects. We assume that the chemical concentrations are governed by the following equations\footnote{The case in which all chemical concentration equations are parabolic will be discussed in Section ~\ref{Sec.Extention-Outlook}.}     
\begin{align}
\label{System.V}
\left\{\begin{array}{lll}
\hspace*{-0.3cm} & - \widetilde\lambda_i \Delta \widetilde v_i + \widetilde\mu_i \widetilde  v_i \hspace*{-0.3cm} & = \widetilde u_i,   \\ 
\partial_\tau \widetilde v_3 \hspace*{-0.3cm} & - \widetilde \lambda_3 \Delta \widetilde v_3 + \widetilde \mu_3 \widetilde  v_3 \hspace*{-0.3cm} & = \widetilde u_3,   
\end{array}\right. \quad i=1,2,
\end{align}
where $\widetilde \lambda_i$ are the diffusion coefficients and $\widetilde \mu_i$ the decay rates of the $i$-th  chemical, for $i=1,2,3$. The kinetics for all species are chosen according to the competitive Lotka–Volterra equations for the preys and to a Holling-type functional response for the predator, as follows
\begin{align*}
\widetilde f_1(\widetilde u_1,\widetilde u_2,\widetilde u_3) =&\, \widetilde \alpha_1 \widetilde u_1(1-\widetilde u_1 - \beta_1  \widetilde u_2) -  \dfrac{\widetilde m_1 \widetilde u_1}{\eta_1+\widetilde u_1} \widetilde u_3,  \\
\widetilde f_2(\widetilde u_1,\widetilde u_2,\widetilde u_3) =&\, \widetilde \alpha_2 \widetilde u_2(1-\widetilde u_2 - \beta_2 \widetilde u_1) -  \dfrac{\widetilde m_2 \widetilde u_2}{\eta_2+\widetilde u_2} \widetilde u_3,  \\
\widetilde f_3(\widetilde u_1,\widetilde u_2,\widetilde u_3) =&\, \left( \gamma_1 \dfrac{\widetilde m_1 \widetilde u_1}{\eta_1+\widetilde u_1} + \gamma_2 \dfrac{\widetilde m_2 \widetilde u_2}{\eta_2+\widetilde u_2} - \widetilde k \right) \widetilde u_3 - \widetilde l \, \widetilde u_3^2,
\end{align*}
where $\widetilde \alpha_1,\widetilde \alpha_2$ are biotic potentials;
 $ \beta_1, \beta_2$  are coefficients of inter-specific competition between two prey species;
 $\widetilde m_1,\widetilde m_2$ are predation
coefficients;
 $\eta_1,\eta_2$ are half-saturation constants;
 $\gamma_1,\gamma_2$ are conversion factors;
 $\widetilde k$ and $\widetilde l$ are the natural death rates of the predator and the intra-specific competition among predators, respectively. 
 The system is subjected to the no-flux boundary condition
\begin{align}
\label{Condition.boundary}
\nabla \widetilde u_i \cdot \nu = \nabla \widetilde  v_i \cdot \nu = 0  & \quad \text{on} \quad \Gamma_\infty := \Gamma \times (0,\infty),  \quad i=1,2,3,
\end{align}
and the initial condition
\begin{align}
\label{Condition.initial}
\widetilde u_i(0)=u_{i0}, \quad \widetilde v_3(0)=v_{30}  & \quad \text{on} \quad  \Omega,  \quad i=1,2,3,
\end{align}
where  $\nu$ is the unit outer normal vector to $\Gamma$, and     $u_{10},u_{20},u_{30},v_{30}$ are given functions.    
 
\subsection{Fast signal diffusion limit} \label{sec:1_2}

In chemotaxis systems, it has usually been argued to simplify \textit{at the formal level} a parabolic-parabolic chemotaxis system to the respective parabolic-elliptic one, see, e.g. in \cite{corrias2004global,kiselev2022chemotaxis}. In the technical aspect, this simplification accompanies substantial benefits for mathematical analysis with a more enhanced insight into parabolic-elliptic systems. For example, the detection of the solution explosion was found early \cite{jager1992explosions} while more significant effort is needed for the corresponding fully parabolic system \cite{mizoguchi2014nondegeneracy}. Moreover, the possibility of various tools has enabled the uncovering of further qualitative properties of the system \cite{kiselev2012biomixing,carrillo2019nonlinear}. The limits passing from the parabolic-parabolic system to the parabolic-elliptic one can be termed \textit{fast signal diffusion limit} that falls into the topic of singular limits in PDEs, see e.g. \cite{mizukami2019fast,wang2019fast,ogawa2023maximal}, which will be the main focus of this paper.  
We consider the biological scenario where the chemicals diffuse much faster than the individual diffusion of all species, or more precisely,
\begin{align}
\varepsilon:= \frac{\max\{\widetilde d_1,\widetilde d_2,\widetilde d_3\}}{\min\{\widetilde \lambda_1,\widetilde \lambda_2,\widetilde \lambda_3\}} \ll 1.
\label{Assumption.QSS}
\end{align}
The re-scaling
\begin{align*}
t:= \max \{\widetilde d_1,\widetilde d_2,\widetilde d_3\} \tau, \quad (u_i^\varepsilon(t),v_i^\varepsilon(t)):=(\widetilde u_i(\tau),\widetilde v_i(\tau)), \quad i=1,2,3,
\end{align*}
recasts the system \eqref{System.U}--\eqref{Condition.initial} as 
 \begin{align}
\label{System.UV}
\left\{ \begin{array}{llll}
\hspace*{0.15cm} \partial_t u_1^\varepsilon  - d_1 \Delta u_1^\varepsilon  - \chi_1 \di ( u_1^\varepsilon \nabla v_3^\varepsilon) &\hspace*{-0.2cm}=\hspace*{-0.2cm}& f_1( u_1^\varepsilon, u_2^\varepsilon, u_3^\varepsilon) ,  \\
\hspace*{0.15cm} \partial_t u_2^\varepsilon - d_2\Delta u_2^\varepsilon  - \chi_2 \di ( u_2^\varepsilon \nabla  v_3^\varepsilon) &\hspace*{-0.2cm}=\hspace*{-0.2cm}& f_2( u_1^\varepsilon, u_2^\varepsilon, u_3^\varepsilon)  , \\
\hspace*{0.15cm} \partial_t u_3^\varepsilon -  d_3\Delta u_3^\varepsilon +  \sum_{i=1}^2 \chi_{3i} \di ( u_3^\varepsilon \nabla v_i^\varepsilon)  &\hspace*{-0.2cm}=\hspace*{-0.2cm}&  f_3( u_1^\varepsilon, u_2^\varepsilon, u_3^\varepsilon) ,  \\
\hspace*{0.95cm} -\, \lambda_i \Delta v_i^\varepsilon + \mu_i v_i^\varepsilon \hspace*{0.1cm} = \zeta_i u_i^\varepsilon, \,\, i=1,2,   \\
\varepsilon \partial_t v_3^\varepsilon - \lambda_3 \Delta v_3^\varepsilon + \mu_3 v_3^\varepsilon = \zeta_3 u_3^\varepsilon,  
\end{array}
\right.
\end{align}
equipped with the boundary - initial conditions 
\begin{align}
\label{Condition.Boun-Init.Epsilon}
(\nabla u_i^\varepsilon \cdot \nu, \nabla v_i^\varepsilon \cdot \nu)|_{\Gamma_\infty} = 0, \quad 
(u_i^\varepsilon(0),v_3^\varepsilon(0))|_{\Omega}=(u_{i0},v_{30}), \quad i=1,2,3,
\end{align} 
where   
\begin{align}
\begin{gathered}
(d_i,\chi_i,\chi_{3j},f_i) :=   \frac{1}{\max \{\widetilde d_1,\widetilde d_2,\widetilde d_3\}} (\widetilde d_i,\widetilde \chi_i,\widetilde \chi_{3j},\widetilde f_i), \\ 
(\lambda_j, \mu_j,\zeta_j):=   (\widetilde \lambda_j, \widetilde \mu_j,1) , \quad
(\lambda_3, \mu_3,\zeta_3):=   \frac{1}{\min\{\widetilde \lambda_1,\widetilde \lambda_2,\widetilde \lambda_3\}} (\widetilde \lambda_3,\widetilde \mu_3,1),
\end{gathered}
\label{ScaledParameters}
\end{align}
for $i=1,2,3$, $j=1,2$. 
Here,  the $\varepsilon$-superscript is used to emphasize the dependence of solutions on $\varepsilon$. Due to the assumption \eqref{Assumption.QSS}, it is relevant to consider the limit as $\varepsilon\to0$.    
Formally, we expect 
\begin{align}
(u_i^\varepsilon,v_i^\varepsilon) \to (u_i,v_i) \quad \text{and} \quad \varepsilon \partial_t v_3^\varepsilon \to 0,
\label{FormalExpectation}
\end{align}
and therefore, the system \eqref{System.UV} is reduced to 
\begin{align}
\label{System.UV.Limit}
\left\{ \begin{array}{llll}
\hspace*{0.15cm} \partial_t u_1  - d_1 \Delta u_1  - \chi_1 \di ( u_1 \nabla v_3 ) &\hspace*{-0.2cm}=\hspace*{-0.2cm}& f_1( u_1 , u_2 , u_3 ) ,   \\
\hspace*{0.15cm} \partial_t u_2 - d_2\Delta u_2  - \chi_2 \di ( u_2 \nabla  v_3 ) &\hspace*{-0.2cm}=\hspace*{-0.2cm}& f_2( u_1 , u_2 , u_3 )  ,   \\
\hspace*{0.15cm} \partial_t u_3 -  d_3\Delta u_3 +  \sum_{i=1}^2 \chi_{3i} \di ( u_3 \nabla v_i )  &\hspace*{-0.2cm}=\hspace*{-0.2cm}&  f_3( u_1 , u_2 , u_3 ) ,  \\
  -\, \lambda_i \Delta v_i  + \mu_i v_i \hspace*{0.1cm} = \zeta_i u_i , \,\, i=1,2,3,  
\end{array}
\right.
\end{align}
equipped with the boundary - initial conditions
\begin{align}
\label{Condition.Boun-Init.Limit}
(\nabla u_i \cdot \nu, \nabla v_i \cdot \nu)|_{\Gamma_\infty} = 0, \quad 
 u_i (0)|_{\Omega}= u_{i0}, \quad i=1,2,3 .
\end{align}
The limiting system \eqref{System.UV.Limit}-\eqref{Condition.Boun-Init.Limit} has been studied in \cite{burger2020numerical,
amorim2023global} with results on global existence and finite volume scheme, where, under the biological situation \eqref{Assumption.QSS},    the equations for chemical concentrations are formally proposed to be elliptic.

\medskip

Although it was commonly considered early on, the mathematical analysis, which rigorously justifies the simplification, has only been studied in the last few years. In \cite{mizukami2019fast}\footnote{See its arXiv version at arXiv:1711.04328, originally announced in November 2017.}, the author proposed a rigorous analysis that shows the convergence of the solution to the following parabolic-parabolic Keller-Segel system 
\begin{align}
\label{Ma:2018:eps}
\left\{ \begin{array}{llll}
\partial_t u_\lambda =  \Delta u_\lambda - \chi \nabla \cdot ( u_\lambda \nabla v_\lambda)  & \text{in } \Omega \times(0,\infty)  \\
\lambda \partial_t v_\lambda = \Delta v_\lambda - v_\lambda + u_\lambda & \text{in } \Omega \times(0,\infty),   \\ 	
(u_\lambda,v_\lambda)|_{t=0}=(u_{0},v_{0}) &  \text{on }  \Omega, 
\end{array} \right.
\end{align}
supplemented by the homogeneous Neumann boundary conditions, to that of the corresponding parabolic-elliptic version  
\begin{align}
\left\{ \begin{array}{llll}
\partial_t u  =  \Delta u - \chi \nabla \cdot (u  \nabla v )  & \text{in } \Omega \times(0,\infty)  \\
 \Delta v - v + u = 0 & \text{in } \Omega \times(0,\infty),  \\ 	
    u|_{t=0}=u_{0} &  \text{on }  \Omega, 
\end{array} \right.
\label{Ma:2018:0}
\end{align}
where, with regular initial state $u_0,v_0$ being of small size, it had been shown as $\lambda\to 0$ that  
\begin{align}
    \left\{\begin{array}{llll}
    u_\lambda \to u \text{ in } C_{\mathsf{loc}}(\overline{\Omega}\times[0,\infty)) , \\
    v_\lambda \to v \text{ in } C_{\mathsf{loc}}(\overline{\Omega}\times(0,\infty)) \cap L^2_{\mathsf{loc}}((0,\infty);W^{1,2}(\Omega)), 
    \end{array}\right.
    \label{Ma:2018:Conv}
\end{align} 
and the limit $(u,v)$ solves \eqref{Ma:2018:0} classically. In \cite{mizukami2018fast}, this result was extended to the chemotactic flux $u_\lambda S(v_\lambda) \nabla v_\lambda$ of strong sensitivity in the sense that $S\in C^{1+\vartheta}((0,\infty))$ for some $\vartheta\in(0,1)$ and $0\le S(v) \le \chi(a+v)^{-k}$ for  $a\ge 0$, $k>1$. 
While with the usual chemotactic flux $\chi u_\lambda \nabla v_\lambda$, the simplification of \eqref{Ma:2018:eps} with non-degenerate diffusion of porous medium type had been investigated in \cite{freitag2020fast}. An  fast signal diffusion  limit for another indirect signal chemotaxis system, describing the
movement of the mountain pine beetle in forest habitats, can be found in \cite{li2023convergence}. On the other hand, results in the setting of the whole domain $\Omega = \mathbb{R}^N$ can be found in    \cite{kurokiba2020singular,ogawa2023maximal}. 
In the other context that Keller-Segel systems are coupled with a fluid, the authors of \cite{wang2019fast} simplified the Keller-Segel-Navier-Stokes system 
\begin{align}
\label{Wang:2018:eps}
\left\{ \begin{array}{llll}
\partial_t n_\varepsilon + u_\varepsilon \cdot \nabla n_\varepsilon =  \Delta n_\varepsilon - \nabla \cdot ( n_\varepsilon S(x,n_\varepsilon,c_\varepsilon) \cdot \nabla c_\varepsilon) + f(x,n_\varepsilon,c_\varepsilon)  \\
\varepsilon \partial_t c_\varepsilon + u_\varepsilon \cdot \nabla c_\varepsilon = \Delta c_\varepsilon - c_\varepsilon + n_\varepsilon  ,   \\ 	
\partial_t u_\varepsilon + \kappa (u_\varepsilon \cdot \nabla) u_\varepsilon = \Delta u_\varepsilon + \nabla P_\varepsilon + n_\varepsilon  \nabla \phi, \; \kappa \in \mathbb R, \hfill \nabla \cdot u_\varepsilon = 0  , \\
(n_\varepsilon,c_\varepsilon,u_\varepsilon)|_{t=0} = (n_0,c_0,u_0),
\end{array} \right.
\end{align}
which is subjected to the no-flux boundary conditions for   $n_\varepsilon,c_\varepsilon$ and the homogeneous Dirichlet boundary condition for the fluid velocity $u_\varepsilon$, to the relative 
\begin{align}
\label{Wang:2018:eps}
\left\{ \begin{array}{llll}
\partial_t n + u \cdot \nabla n =  \Delta n - \nabla \cdot ( n S(x,n,c) \cdot \nabla c) + f(x,n,c)  \\
u \cdot \nabla c = \Delta c - c + n  ,   \\ 	
\partial_t u + \kappa (u \cdot \nabla) u = \Delta u + \nabla P + n   \nabla \phi, \hfill \nabla \cdot u = 0  , \\
(n,u)|_{t=0} = (n_0,u_0),
\end{array} \right.
\end{align}
according to the limit as $\varepsilon\to 0$. A conditional result, including an explicit criterion allowing for rigorous simplification, had been provided, which mainly entails the uniform-in-$\varepsilon$ regularity 
\begin{align*}
    \sup_{\varepsilon>0} \Big( \|\nabla c_\varepsilon\|_{L^p((0,T);L^q(\Omega))} + \|u_\varepsilon\|_{L^\infty((0,T);L^r(\Omega))} \Big) <\infty,
\end{align*}
for some $p,q,r$ such that  
\begin{align*}
    2<p\le \infty, \quad q>N, \quad r>\max\{2;N\} \; \text{ such that } \; \frac{1}{p} + \frac{N}{2q} < \frac{1}{2}. 
\end{align*}
Other results for Keller-Segel-(Navier-)Stokes system can be found in \cite{li2021convergence,
li2023stability,wu2025fast}. 

\medskip

{\bf Initial layer effect and critical manifold}: It is useful to point out that the initial conditions for the systems \eqref{System.UV}-\eqref{Condition.Boun-Init.Epsilon} and \eqref{System.UV.Limit}-\eqref{Condition.Boun-Init.Limit} are different. Therefore, the initial values $v_{3}(0)$ of the solution to \eqref{System.UV.Limit}-\eqref{Condition.Boun-Init.Limit} is not necessarily equal to the initial datum $v_{30}$ given in \eqref{System.UV}-\eqref{Condition.Boun-Init.Epsilon}. Since the limiting solution satisfies that $\lambda_3 \Delta v_3 - \mu_3 v_3 + u_3 = 0$, or in the notion of  dynamical systems, it stays in the so-called critical manifold 
\begin{align}
 \mathcal C:= \{ (\vartheta,\varrho)\in L^2(\Omega)\times H^{2}(\Omega):  \lambda_3 \Delta \varrho - \mu_3 \varrho + \vartheta = 0\},
\label{CriMani}
\end{align}
the mentioned difference yields a distance from the initial data of the $\varepsilon$-dependent system \eqref{System.UV}-\eqref{Condition.Boun-Init.Epsilon} to this manifold, called an initial layer.

\medskip

To the best of our knowledge, fast signal diffusion limits in competitive prey-predator systems have not been studied, especially the fast signal diffusion limit from \eqref{System.UV}-\eqref{Condition.Boun-Init.Epsilon} to \eqref{System.UV.Limit}-\eqref{Condition.Boun-Init.Limit}. Moreover, the effect of this layer on the accuracy (or more mathematically, the convergence rates) of the approximation for the parabolic-parabolic system \eqref{System.UV}-\eqref{Condition.Boun-Init.Epsilon} using its parabolic-elliptic relative \eqref{System.UV.Limit}-\eqref{Condition.Boun-Init.Limit}   has not been analysed and is not well understood in general fast signal diffusion limits.

\subsection{Main goal and organisation}

\textbf{Main goal and challenges}: Our main goals are to provide a rigorous analysis for the fast signal diffusion limit from \eqref{System.UV}-\eqref{Condition.Boun-Init.Epsilon} to \eqref{System.UV.Limit}-\eqref{Condition.Boun-Init.Limit} as well as for the initial layer effect in both analysis and numerical simulations. 

\medskip

Typically, fast signal diffusion limit problems include parabolic equations with slow evolutions of the form $\varepsilon \partial_t u_\varepsilon - L u_\varepsilon = f(u_\varepsilon)$, where $L$ is an elliptic operator and $f$ is an external source possibly being nonlinear. Usual analysis tools, such as the maximal regularity, can give the boundedness of the product $\varepsilon \partial_t u_\varepsilon$, but not the time derivative $\partial_t u_\varepsilon$. Hence, it seems challenging to utilise the Aubin-Lions lemma to get the convergence of $u_\varepsilon$ in a strong sense. Moreover, to have the smoothing effect via estimates for the heat semigroup, an essential-in-time boundedness of $f(u_\varepsilon)$ is needed, which is usually difficult before having the necessary a priori estimates. 

\medskip

In general, studying chemotaxis systems faces the issue of low regularity, and since many techniques for their global solvability are inapplicable to obtain the necessary compactness of the solutions that depend on the relaxation parameter, it is challenging to provide an analysis of fast signal diffusion limits and their convergence rates with the initial layer effect. 

\medskip

{\bf Organisation}: 
We first state our main results in Section~\ref{sec:2}. The global existence of a unique classical solution to \eqref{System.UV}-\eqref{Condition.Boun-Init.Epsilon} is proved in Section~\ref{sec-strong-solu}. We present a rigorous analysis for the fast signal diffusion limit passing from \eqref{System.UV}-\eqref{Condition.Boun-Init.Epsilon} to \eqref{System.UV.Limit}-\eqref{Condition.Boun-Init.Limit} in Section~\ref{sec:4}. Then, uniform-in-time convergence rates are studied in Section~\ref{sec:5}. In Section~\ref{sec:6}, differences between the systems with and without the slow evolution and between the systems with one or two preys are discussed by numerical simulations. The final section contains further discussions.

\medskip

{\bf notations}: We write $Q_T = \Omega\times(0,T)$ for $T>0$, and $L^p(\Omega)$ and $L^p(Q_T)$, $1\le p \le \infty$, for the  usual Lebesgue spaces. We use the same symbol, $C$, without distinction to denote positive constants that may change line by line, or even in the same line, which do not depend on $\eps>0$, but can depend on other fixed parameters. We also write $C_{T}$ to underline the dependency of $C$ on $T$. With $a \in X_+$, we mean that $a\in X$ and $a\ge 0$.

\section{Main results and key ideas}\label{sec:2}

Before stating our main results, we note that although the following assumption can be relaxed in some lemmas, it will be assumed throughout this paper to avoid confusion and ensure consistency. 

\begin{assumption}
\label{Assumption.Initial1} Assume that 
$u_{i0}\in C_+(\overline{\Omega})$, $v_{30} \in C_+^{2}(\overline{\Omega})$ with the compatibility conditions $ \nabla v_{3i} \cdot \nu=0$ on $\Gamma$ for $i=1,2,3$. 
\end{assumption}

For studying the aforementioned fast signal diffusion limit, the global existence of a unique classical solution to Problem \eqref{System.UV}-\eqref{Condition.Boun-Init.Epsilon} for each $\varepsilon>0$, in the sense of Definition~\ref{Definition.Solution},   is first required. By standard arguments of fixed point theorems, the existence of a local classical, non-negative solution $(u_i^\varepsilon,v_i^\varepsilon)$ can be accomplished up to the maximal time  $0<T_{\max}\le \infty$, 
for example, see \cite{winkler2010boundedness}. To extend the maximal time to be global, as presented in Theorem \ref{Theo.GlobalClassicalSol},  we assume $\Tm<\infty$ and prove that $(u_i^\varepsilon,v_i^\varepsilon)$ belongs to $L^\infty(Q_{T_{\max}})$, explained as follows. 

\begin{itemize}
    \item \textit{Improved regularity}: The homogeneous Neumann boundary conditions  ensure that $(u_1^\varepsilon,u_2^\varepsilon,u_3^\varepsilon) \in L^\infty(0,T;L^1(\Omega)) \cap L^{2}(Q_T)$. However, we can show $(u_1^\varepsilon,u_2^\varepsilon,u_3^\varepsilon) \in L^{2+\theta}(Q_T)$ for some $\theta>0$ using the energy function
\begin{align}
E_p^\varepsilon(t)= \sum_{i=1}^3 \int_\Omega  (u_i^\varepsilon(t))^{p} , \quad 0< t \le T\le \Tm, \quad 1< p <\infty. 
\label{Energy.FunctionDef}
\end{align}
Indeed, by utilising the $L^p$-maximal regularity with independent-of-$p$ constants, see Lemmas~\ref{Lem.MaximalRegularity}-\ref{Lem.ParaMaximalRegularity}, we prove the following a priori estimate   
\begin{align}
\begin{aligned}
& E_p^\varepsilon(t) +  (p-1) \sum_{i=1}^3  \intQt   (u_i^\varepsilon)^{p-2} |\nabla u_i^\varepsilon|^2 +  \sum_{i=1}^3 \intQt  (u_i^\varepsilon)^{p+1} \\
& \le C^\varepsilon_{p,(u_{i0}),v_{30}} + C_p \int_0^t E_p^\varepsilon(s)  + (p-1) C_{p}^\varepsilon \sum_{i=1}^3 \intQt (u_i^\varepsilon)^{p+1}, 
\end{aligned}
\label{Key.Theo1.1}
\end{align}
 where the constant $C_{p}^\varepsilon$ satisfies that $\lim_{p\searrow1} (p-1)C_{p}^\varepsilon=0$, see  Lemma~\ref{Lem.EnergyEstimate}.   This step is done by a sufficiently close choice of $p$ to $1$ from the right.
    \item \textit{Feedback arguments}: Feedback from the prey species to the predator one can be observed from the structure of \eqref{System.UV},  for which, $u_3^\varepsilon \in L^{2+\theta_0}(Q_T)$ if  $u_1^\varepsilon,u_2^\varepsilon \in L^{2+\theta_0}(Q_T)$ for some  $\theta_0>0$, while, in the feedback from the predator species to the prey ones, $u_1^\varepsilon,u_2^\varepsilon \in L^{2+(3/2)\theta_*}(Q_T)$ if  $u_3^\varepsilon \in L^{2+\theta_*}(Q_T)$ for some $\theta_*>0$, see Lemmas~\ref{Lem.Feedback12to3}-\ref{Lem.Feedback3to12}.
    \item \textit{Smoothing effect}: The above feedback allows us to perform a bootstrap argument to show solution regularity up to $L^q(Q_T)$ for any $1\le q<\infty$, see Lemma~\ref{Lem.LpEstimate}. Then,  the smoothing effect of the heat semigroup claims the global existence corresponding to an $L^\infty(Q_T)$-estimate for the solution.  
\end{itemize}

\begin{theorem}[Global existence of classical solution] \label{Theo.GlobalClassicalSol} 
For each $\varepsilon>0$, there exists a unique globally classical solution $(u_i^\varepsilon,v_i^\varepsilon)_{i=1,2,3}$ to  \eqref{System.UV}-\eqref{Condition.Boun-Init.Epsilon} in the sense of Definition~\ref{Definition.Solution}.
\end{theorem}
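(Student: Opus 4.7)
The plan is to argue by contradiction: fix $\varepsilon>0$, invoke the standard local existence of a unique classical non-negative solution $(u_i^\varepsilon,v_i^\varepsilon)_{i=1,2,3}$ on a maximal interval $(0,\Tm)$ (via fixed-point arguments as in \cite{winkler2010boundedness}), and suppose $\Tm<\infty$. The usual extension criterion then says that it suffices to derive a bound of the form $\|u_i^\varepsilon\|_{L^\infty(Q_{\Tm})}<\infty$ for $i=1,2,3$, together with the corresponding bounds on $\nabla v_i^\varepsilon$ obtained from elliptic/parabolic regularity applied to the $v_i^\varepsilon$-equations. The strategy to reach this $L^\infty$-bound is the three-step program sketched immediately before the statement: (i) an energy estimate yielding initial higher integrability $u_i^\varepsilon\in L^{2+\theta}(Q_{\Tm})$ for some $\theta>0$; (ii) a prey--predator feedback iteration upgrading this to $L^q(Q_{\Tm})$ for every finite $q$; (iii) a heat-semigroup smoothing argument producing the $L^\infty$-bound.

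For step (i), I would test the $u_i^\varepsilon$-equation against $(u_i^\varepsilon)^{p-1}$ with $1<p<\infty$ and sum over $i=1,2,3$. The parabolic diffusion produces the dissipation term $(p-1)\intQt(u_i^\varepsilon)^{p-2}|\nabla u_i^\varepsilon|^2$, the Lotka--Volterra/Holling kinetics supply the damping $-\intQt(u_i^\varepsilon)^{p+1}$ (up to lower-order terms controlled by $\int_0^t E_p^\varepsilon$), and the chemotactic terms produce cross terms of the form $(p-1)\intQt(u_i^\varepsilon)^{p-1}\nabla u_i^\varepsilon\cdot\nabla v_j^\varepsilon$, which after an integration by parts become bounded by $(p-1)\intQt(u_i^\varepsilon)^p|\Delta v_j^\varepsilon|$ up to absorbable terms. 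Here the crucial input is the $L^p$-maximal regularity with $p$-independent constants (Lemmas~\ref{Lem.MaximalRegularity}--\ref{Lem.ParaMaximalRegularity}) applied to the $v_i^\varepsilon$-equations, which lets one control $\|\Delta v_j^\varepsilon\|_{L^{p+1}}$ by $\|u_j^\varepsilon\|_{L^{p+1}}$ with a constant $C_p^\varepsilon$ such that $(p-1)C_p^\varepsilon\to 0$ as $p\searrow 1$. This gives exactly \eqref{Key.Theo1.1}. Choosing $p-1$ small enough so that $(p-1)C_p^\varepsilon<1$, the last term on the right of \eqref{Key.Theo1.1} is absorbed into the left, and Gr\"onwall's inequality yields $u_i^\varepsilon\in L^{p+1}(Q_{\Tm})=L^{2+\theta}(Q_{\Tm})$ with $\theta=p-1>0$.

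The remaining two steps follow almost mechanically from the lemmas already cited. The prey-to-predator feedback (Lemma~\ref{Lem.Feedback12to3}) upgrades $u_3^\varepsilon$ to $L^{2+\theta_0}(Q_{\Tm})$, while the predator-to-prey feedback (Lemma~\ref{Lem.Feedback3to12}) amplifies integrability by a factor $3/2$ at each pass; iterating alternately produces $(u_i^\varepsilon)\in L^q(Q_{\Tm})$ for every finite $q$ (Lemma~\ref{Lem.LpEstimate}). Once $u_i^\varepsilon\in L^q(Q_{\Tm})$ with $q$ large (in particular $q>N+2=4$ in our two-dimensional setting), elliptic and parabolic regularity promote $\nabla v_i^\varepsilon$ to $L^\infty(Q_{\Tm})$, so that the chemotactic terms in the $u_i^\varepsilon$-equations become admissible drifts, and a Moser iteration or variation-of-constants representation against the Neumann heat semigroup yields the $L^\infty$-bound, contradicting $\Tm<\infty$. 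Uniqueness of the classical solution follows by the usual energy estimate applied to the difference of two solutions, exploiting the $L^\infty$-bounds just obtained. I expect the genuinely non-routine point to be step (i): carefully tracking that the maximal-regularity constants entering \eqref{Key.Theo1.1} are $p$-independent so that the smallness $(p-1)C_p^\varepsilon\to 0$ really holds, since this is what makes the whole absorption mechanism work in the presence of both mutually attractive and mutually repulsive chemotactic cross terms.
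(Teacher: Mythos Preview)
Your proposal is correct and follows essentially the same approach as the paper: the contradiction argument based on the extension criterion \eqref{Lem.GlobalExistenceCriteria}, the energy estimate of Lemma~\ref{Lem.EnergyEstimate} (with the absorption mechanism relying on $(p-1)C_p^\varepsilon\to 0$ as $p\searrow 1$), the feedback bootstrap of Lemmas~\ref{Lem.Feedback12to3}--\ref{Lem.LpEstimate}, and the final heat-semigroup smoothing are all exactly what the paper does. The only minor difference is that the paper does not spell out uniqueness separately, since it is already part of the local existence theorem invoked at the outset.
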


One of the main ingredients of a rigorous analysis for \textit{fast signal diffusion limits} is the compactness of the $\varepsilon$-depending solution, or more specifically, its \textit{uniform-in-$\varepsilon$} (shortly, \textit{uniform}) bounds. Our analysis is constructed according to the following framework.

\begin{itemize}
    \item \textit{Uniformly improved regularity}: An application of the heat regularisation (Lemma~\ref{Lem.HeatRegularisation}) does not give a uniform estimate for $v_3^\varepsilon$, since its equation includes the slow evolution $\varepsilon \partial_t v_3^\varepsilon$. Thus, we need an improved version of \eqref{Key.Theo1.1}. By the Gagliardo–Nirenberg  inequality,   
\begin{align*}
\intQT (u_i^\varepsilon)^{2p} 
& \le C_p \left( \sup_{0\le t\le T} \int_\Omega (u_i^\varepsilon(t))^p \right)  \intQT \big|\nabla (u_i^\varepsilon)^{\frac{p}{2}}\big|^2 
\end{align*}
holds for any $0<T<\infty$. 
Then, the energy function \eqref{Energy.FunctionDef} can be estimated as follows
\begin{equation*}
\begin{gathered} 
\left( \sup_{0\le t\le T} \int_\Omega (u_i^\varepsilon(t))^p \right)  +  (p-1) \intQT \big| \nabla (u_i^\varepsilon)^{\frac{p}{2}}\big|^2   +  \intQT (u_i^\varepsilon)^{p+1}  \\
\le C_{p,T} + C_{p,T} \int_\Omega u_{i0}^p   +   C_{p,T} (p-1)    \left( \sup_{0\le t\le T} \int_\Omega (u_i^\varepsilon(t))^p \right) , 
\end{gathered}
\end{equation*}
where $C_{p,T}$ does not depend on $\varepsilon$, and it satisfies the limit  
$\lim_{p\searrow 1} (p-1) C_{p,T} = 0$. This implies the uniform boundedness of $u_1^\varepsilon,u_2^\varepsilon,u_3^\varepsilon$  in $L^\infty(0,T;L^{1+\delta}(\Omega)) \cap L^{2+\delta}(Q_T)$ for some $\delta>0$, see Lemma~\ref{Lem.uEps.LInftyL1+}, which is crucial for proving that $v_1^\varepsilon,v_2^\varepsilon$ is uniformly bounded in $L^\infty(0,T;L^{1+\delta}(\Omega))$, and $v_3^\varepsilon$ in $L^\infty(0,T;L^q(\Omega))$ for any $1\le q<\infty$, see Lemma~\ref{Lem.vEps.LInftyLp}.
    \item \textit{Feedback argument via parabolic maximal regularity with slow evolution}: With the parabolic equation  $ \varepsilon \partial_t w^\varepsilon - \lambda \Delta w^\varepsilon + \mu w^\varepsilon = h^\varepsilon$, we show that $\|\Delta w^\varepsilon\|_{L^q(Q_T)}$ can be controlled by $\|h^\varepsilon\|_{L^q(Q_T)}$ for any $1\le q<\infty$, see Lemma~\ref{Lem.MRSlowEvolution}. Then, thanks to the latter application of the Gagliardo–Nirenberg inequality, we show the feedback argument that the uniform boundedness of $u_1^\varepsilon,u_2^\varepsilon,u_3^\varepsilon$ in $L^{2+\delta_0}(Q_T)$ for some $\delta_0>0$ can be improved up to $L^{2+2\delta_0}(Q_T)$, see Lemma~\ref{Lem.Eps.Feedback}.
    \item \textit{Smoothing effect}: A bootstrap argument and the smoothing effect of the heat semigroup can be performed similarly to
the proof of Theorem~\ref{Theo.GlobalClassicalSol}, where the uniform boundedness of $(u_1^\varepsilon,u_2^\varepsilon,u_3^\varepsilon)$ in $L^\infty(Q_T) \cap L^2(0,T;H^1(\Omega))$ is obtained.
    \item \textit{Weak-to-strong convergence}: Due to the lack of time derivatives in the equations for $v_1^\varepsilon, v_2^\varepsilon$, and the vanishing of the parabolicity in the equation for $v_3^\varepsilon$ (i.e., $\varepsilon \partial_t v_3^\varepsilon \to 0$ in a suitable sense), the establishment of strong convergence of $(v_1^\varepsilon,v_2^\varepsilon,v_3^\varepsilon)$ is non-standard. However, we  can use the  energy equation method, see \cite{ball2004global,henneke2016fast}, and the uniform convexity of $L^2(0,T;H^1(\Omega))$ to prove that if $v_1^\varepsilon,v_2^\varepsilon,v_3^\varepsilon$ weakly converge in  $L^2(0,T;H^1(\Omega))$ then the convergence becomes strong, see Lemma~\ref{Lem:Weak-to-strong}. 
    \item \textit{Passing to the limit}: This is based on the Aubin–Lions lemma for $\{u_i^\varepsilon\}$, and the weak-to-strong convergence for $\{v_i^\varepsilon\}$. 
\end{itemize}

\begin{theorem}[Fast signal diffusion limit]  \label{Theo.FastSignalDiffLimit} Let $(u_i^\varepsilon,v_i^\varepsilon)_{i=1,2,3}$ be the global classical solution to Problem \eqref{System.UV}-\eqref{Condition.Boun-Init.Epsilon} for each $\varepsilon>0$. Then, for any $1\le q<\infty$,
\begin{gather}
\sup_{\varepsilon>0} \left( \sum_{i=1}^3 \| u_i^\varepsilon \|_{L^\infty(Q_T)} + \sum_{i=1}^3 \| u_i^\varepsilon \|_{L^2(0,T;H^1(\Omega))}   \right)   \le C_T,    
\label{Theo.FastSignalDiffLimit.State1}
\\
 \displaystyle \sup_{\varepsilon>0} \left( \sum_{i=1}^2 \| v_i^\varepsilon \|_{L^\infty(0,T;W^{2,\infty}(\Omega))} + \| v_3^\varepsilon \|_{L^\infty(Q_T)}  +   \| v_3^\varepsilon \|_{L^q(0,T;W^{2,q}(\Omega))}  \right)  \le C_T,
\label{Theo.FastSignalDiffLimit.State2}
\end{gather}
and up to the whole sequence as $\varepsilon \to 0$,  
\begin{align*}
\begin{array}{clcll}
(u_1^\varepsilon,u_2^\varepsilon,u_3^\varepsilon) &\hspace*{-0.2cm}\to\hspace*{-0.2cm}& (u_1,u_2,u_3) & \text{strongly in } L^q(Q_T)^3,   \\
(v_1^\varepsilon,v_2^\varepsilon,v_3^\varepsilon) &\hspace*{-0.2cm}\to\hspace*{-0.2cm}& (v_1,v_2,v_3) & \text{strongly in } L^q(Q_T)^3,  \\
(\nabla v_1^\varepsilon,\nabla v_2^\varepsilon,\nabla v_3^\varepsilon) &\hspace*{-0.2cm}\to\hspace*{-0.2cm}& (\nabla v_1,\nabla v_2,\nabla v_3) & \text{strongly in } L^q(Q_T)^3,
\end{array}
\end{align*}
where 
 $(u_i,v_i)_{1\le i\le 3}$ is the unique classical solution to Problem \eqref{System.UV.Limit}-\eqref{Condition.Boun-Init.Limit}, see Definition~\ref{Definition.WeakSolution}. Moreover,  the limiting solution has the following regularity, for $i=1,2,3$, 
\begin{gather}
u_i \in L^\infty(0,T;W^{1,\infty}(\Omega)) \cap W^{2,1}_q(Q_T), \quad 
v_i \, \in L^\infty(0,T;W^{2,\infty}(\Omega)) .
\label{Theo.FastSignalDiffLimit.State3}
\end{gather}  
\end{theorem}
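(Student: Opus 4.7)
My plan follows the roadmap sketched in the introduction, proceeding through uniform estimates, compactness, and identification of the limit, with close attention to the terms where $\varepsilon$-uniformity is non-trivial.

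\emph{Step 1: uniformly improved regularity.} I would first revisit the energy computation used in Theorem~\ref{Theo.GlobalClassicalSol}, testing the $u_i^\varepsilon$-equations with $(u_i^\varepsilon)^{p-1}$, but keeping every constant independent of $\varepsilon$. Elliptic regularity handles $v_1^\varepsilon,v_2^\varepsilon$ in terms of $u_1^\varepsilon,u_2^\varepsilon$. The delicate object is $v_3^\varepsilon$, whose equation has the vanishing evolution $\varepsilon\partial_t v_3^\varepsilon$; here the $\varepsilon$-uniform parabolic maximal regularity of Lemma~\ref{Lem.MRSlowEvolution} provides $\|\Delta v_3^\varepsilon\|_{L^q(Q_T)}\le C\|u_3^\varepsilon\|_{L^q(Q_T)}$ with $C$ independent of $\varepsilon$. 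Combining with the Gagliardo--Nirenberg inequality displayed above and choosing $p$ sufficiently close to $1$ from the right absorbs the bad term and produces the uniform bound $(u_i^\varepsilon) \subset L^\infty(0,T;L^{1+\delta}(\Omega))\cap L^{2+\delta}(Q_T)$ (Lemma~\ref{Lem.uEps.LInftyL1+}), which then transfers to the corresponding uniform bounds for $v_i^\varepsilon$ (Lemma~\ref{Lem.vEps.LInftyLp}).

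\emph{Step 2: feedback bootstrap to $L^\infty$.} Given uniform $L^{2+\delta_0}(Q_T)$ bounds on $u_i^\varepsilon$, Lemma~\ref{Lem.MRSlowEvolution} plus elliptic regularity yield the same bound on $\Delta v_i^\varepsilon$. Re-entering the energy argument of Step~1 with this improved control on the chemotactic terms and applying Gagliardo--Nirenberg once more upgrades the integrability exponent from $2+\delta_0$ to $2+2\delta_0$ (Lemma~\ref{Lem.Eps.Feedback}). A finite number of iterations reaches any $L^q(Q_T)$, and then the smoothing effect of the Neumann heat semigroup, run exactly as in the proof of Theorem~\ref{Theo.GlobalClassicalSol} but with $\varepsilon$-uniform input, promotes the estimate to $L^\infty(Q_T)$, delivering \eqref{Theo.FastSignalDiffLimit.State1} and, via elliptic and $\varepsilon$-uniform parabolic regularity, \eqref{Theo.FastSignalDiffLimit.State2}.

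\emph{Step 3: compactness and weak-to-strong upgrade.} For the $u_i^\varepsilon$, the uniform $L^2(0,T;H^1(\Omega))$ bound together with the PDE (which controls $\partial_t u_i^\varepsilon$ in some negative Sobolev space via the already-bounded chemotactic flux and kinetics) allows Aubin--Lions to produce strong $L^q(Q_T)$ convergence along a subsequence. For $v_1^\varepsilon, v_2^\varepsilon$, strong convergence is inherited from that of $u_1^\varepsilon, u_2^\varepsilon$ through the continuous dependence provided by the elliptic equations $-\lambda_i\Delta v_i^\varepsilon+\mu_i v_i^\varepsilon=\zeta_i u_i^\varepsilon$. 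The main obstacle is $v_3^\varepsilon$: since $\varepsilon\partial_t v_3^\varepsilon\to 0$ only weakly and no time-derivative bound survives uniformly, Aubin--Lions is unavailable. Here I would invoke the energy-equation method of Lemma~\ref{Lem:Weak-to-strong} in the spirit of \cite{ball2004global,henneke2016fast}: the weak limit $v_3$ satisfies an energy identity obtained by testing the limiting elliptic equation against itself, and a matching $\liminf$-lim-sup argument for $v_3^\varepsilon$ combined with the uniform convexity of $L^2(0,T;H^1(\Omega))$ upgrades the weak convergence $v_3^\varepsilon\rightharpoonup v_3$ to strong convergence.

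\emph{Step 4: passing to the limit and regularity.} With strong $L^q$ convergence of both $u_i^\varepsilon$ and $\nabla v_i^\varepsilon$, together with the uniform $L^\infty$ bounds, the chemotactic products $u_i^\varepsilon\nabla v_j^\varepsilon$ and the kinetics $f_i(u_1^\varepsilon,u_2^\varepsilon,u_3^\varepsilon)$ pass to the limit in the distributional sense, and the term $\varepsilon\partial_t v_3^\varepsilon$ vanishes in $\mathcal{D}'(Q_T)$. The limit therefore solves \eqref{System.UV.Limit}--\eqref{Condition.Boun-Init.Limit} in the sense of Definition~\ref{Definition.WeakSolution}. The regularity \eqref{Theo.FastSignalDiffLimit.State3} follows from applying $W^{2,1}_q$-maximal regularity to the $u_i$-equations with the now-smooth right-hand side and, for the $v_i$, from elliptic regularity applied to $-\lambda_i\Delta v_i+\mu_i v_i=\zeta_i u_i$. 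Finally, uniqueness of the classical solution to the limiting problem, obtained via a standard $L^2$ Gronwall contraction using the $L^\infty$ bounds, promotes the subsequential convergence to convergence of the full sequence as $\varepsilon\to 0$.
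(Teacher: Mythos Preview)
Your proposal is correct and follows essentially the same architecture as the paper's proof: uniform-in-$\varepsilon$ energy estimates with $p\searrow 1$ (Lemmas~\ref{Lem.vEps.L2H2}--\ref{Lem.vEps.LInftyLp}), the feedback bootstrap via Lemma~\ref{Lem.MRSlowEvolution} to reach $L^\infty(Q_T)$ (Lemma~\ref{Lem.UVeps.Bootstrap}), Aubin--Lions for $u_i^\varepsilon$, and the energy-equation weak-to-strong upgrade of Lemma~\ref{Lem:Weak-to-strong} for $v_3^\varepsilon$. Your treatment of $v_1^\varepsilon,v_2^\varepsilon$ via direct elliptic continuous dependence is in fact slightly cleaner than the paper, which applies Lemma~\ref{Lem:Weak-to-strong} to all three $v_i^\varepsilon$; and where you sketch uniqueness by an $L^2$ Gronwall argument, the paper simply cites \cite{amorim2023global} (Lemma~\ref{Lem.MainResultsOfABOV23}).
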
 

\begin{remark}[Initial value of the limiting solution]
\label{Remark.Theo.FastSignalDiffLimit}
We note from Definition~\ref{Definition.WeakSolution} that $(u_1,u_2,u_3)$ has the same initial condition as \eqref{Condition.Boun-Init.Epsilon}, namely,
$ u_i(0)=u_{i0}$ on $\Omega$ for $ i=1,2,3.$ 
However, due to \eqref{Expression.v}, the initial value of the component $v_3$ is given by
\begin{align}
v_3(0) &  = \int_0^\infty e^{s(\lambda_3 \Delta - \mu_3 I)} u_{30} \, ds .
\label{Expression.v30InTermOfu30}
\end{align}   
According to the identity \eqref{EigenvalueIdentity},   
\begin{align*}
\int_0^\infty e^{s(\lambda_3 \Delta - \mu_3 I)} (\lambda_3 \Delta - \mu_3 I) \, ds \equiv - I,
\end{align*}
which gives combined with \eqref{Expression.v30InTermOfu30} that
\begin{align}
v_{30}-v_3(0) = - \int_0^\infty e^{s(\lambda_3 \Delta - \mu_3 I)} ( \lambda_3 \Delta v_{30} - \mu_3 v_{30} + u_{30}) \, ds.  
\label{Expression.InitialLayer}
\end{align} 
For arbitrary initial data $u_{30},v_{30}$, we generally have  $v_3(0)\not=v_{30}$.  This difference reveals the effect of the so-called initial layer, which will be indicated in Theorem~\ref{Theo.ConvergenceRate}.
\end{remark}

The next interest is to estimate $L^\infty$-in-time convergence rates for the   fast signal diffusion  limit in Theorem~\ref{Theo.FastSignalDiffLimit}. More precisely, the rate $(\widehat{u}_i^{\,\varepsilon},\widehat{v}_i^{\,\varepsilon}) :=(u^{\eps}_i - u_i, v^{\eps}_i - v_i) $ will be estimated in $L^\infty(0,T;W^{r,q}(\Omega))$ with suitable $r,q$. For this purpose, by subtracting side-by-side corresponding equations in \eqref{System.UV} and \eqref{System.UV.Limit}, $(\widehat{u}^{\,\varepsilon}_i, \widehat{v}^{\,\varepsilon}_i)$  satisfies the \textit{rate system}   
\begin{align}\label{System.Rate.UV}
	\left\{ \begin{array}{rlllllll}
		 \partial_t \widehat{u}^{\,\varepsilon}_1 &\hspace*{-0.25cm}-\hspace*{-0.25cm}& d_1\Delta \widehat{u}^{\,\varepsilon}_1  - \chi_1 \di (\widehat{u}^{\,\varepsilon}_1 \nabla v_3^\varepsilon) - \chi_1 \di (u_1 \nabla \widehat{v}^{\,\varepsilon}_3) &=&  \widehat{f}^{\,\varepsilon}_1, \vspace*{0.05cm} \\
		 \partial_t \widehat{u}^{\,\varepsilon}_2  &\hspace*{-0.25cm}-\hspace*{-0.25cm}& d_2\Delta \widehat{u}^{\,\varepsilon}_2  - \chi_2 \di (\widehat{u}^{\,\varepsilon}_2 \nabla v_3^\varepsilon)  - \chi_2 \di (u_2 \nabla \widehat{v}^{\,\varepsilon}_3) &=& \widehat{f}^{\,\varepsilon}_2,\vspace*{0.05cm}  \\
		 \partial_t \widehat{u}^{\,\varepsilon}_3 &\hspace*{-0.25cm}-\hspace*{-0.25cm}& d_3\Delta \widehat{u}^{\,\varepsilon}_3  + \sum_{i=1}^2 \left( \chi_{3i} \di (\widehat{u}^{\,\varepsilon}_3 \nabla v_i^\varepsilon)   +  \chi_{3i} \di (u_3 ( \nabla \widehat{v}^{\,\varepsilon}_i) \right) &=&  \widehat{f}^{\,\varepsilon}_3, \vspace*{0.05cm} \\
	&\hspace*{-0.25cm}-\hspace*{-0.25cm}& \lambda_i \Delta \widehat{v}^{\,\varepsilon}_i\hspace*{0.03cm} + \mu_i \widehat{v}^{\,\varepsilon}_i  = \widehat{u}^{\,\varepsilon}_i , \,\,i=1,2, \vspace*{0.05cm}  \\
		\eps \partial_t \widehat{v}^{\,\eps}_3 &\hspace*{-0.25cm}-\hspace*{-0.25cm}& \lambda_3 \Delta \widehat{v}^{\,\varepsilon}_3 \hspace*{0.03cm} + \mu_3 \widehat{v}^{\,\varepsilon}_3 =  \widehat{u}^{\,\varepsilon}_3 - \eps \partial_t v_3,
	\end{array}
	\right.
\end{align}
subjected to the homogeneous Neumann boundary condition. With $v_3(0)$ defined in \eqref{Expression.v30InTermOfu30}, the initial state of this system is given by   
\begin{align}	\label{Condition.Initial.Rate}
	  (\widehat{u}^{\,\varepsilon}_i(0), \widehat{v}^{\,\varepsilon}_3(0) )|_{\Omega}=(0, v_{30}-v_3(0)), \quad  i=1,2,3.
\end{align} 
Here, for the sake of convenience, we denote  
$\widehat{f}^{\,\varepsilon}_i:= f_i(u_1^\varepsilon,u_2^\varepsilon,u_3^\varepsilon) - f_i(u_1,u_2,u_3).$ 
To obtain $L^\infty$-convergence rates, we first utilise the  energy function  
\begin{align*}
\mathcal E_{n}[\widehat u^{\,\varepsilon}](t):=\sum_{i=1}^3 \int_\Omega (\widehat{u}_i^{\,\varepsilon})^{2n},
\end{align*}
for $n\in \mathbb{N}$, $n\ge 1$. One of the key points is to use the  uniform boundedness of   $(u_i^\varepsilon,v_i^\varepsilon)$ in  Theorem~\ref{Theo.FastSignalDiffLimit}. We prove in Lemma~\ref{Lem.Rate.EnergyE}  that  
\begin{align*}
  \frac{d}{dt} \mathcal E_{n}[\widehat u^{\,\varepsilon}] \le - \frac{2n-1}{n} \sum_{i=1}^3 d_i \int_\Omega |\nabla (\widehat{u}_i^{\,\varepsilon})^{n}|^2  
+ C_{n,T}\, \mathcal E_{n}[\widehat u^{\,\varepsilon}]  + C_{n,T}\,  \mathcal F[\widehat v^{\,\varepsilon}] , 
\end{align*}
where $\mathcal F[\widehat v^{\,\varepsilon}]:=\sum_{i=1}^3  \int_\Omega  |\nabla \widehat{v}_i^{\,\varepsilon}|^2$. In order to estimate $\mathcal E_{n}[\widehat u^{\,\varepsilon}]$, this suggests estimating $\mathcal F[\widehat v^{\,\varepsilon}]$. Indeed, by considering $n=1$, we can show    
\begin{equation*}
\begin{aligned}
\mathcal F[\widehat v^{\,\varepsilon}] \le - \frac{\varepsilon}{2\lambda_3} \frac{d}{dt} \int_\Omega (\widehat{v}_3^{\,\varepsilon})^{2} -  \frac{\mu_3}{2\lambda_3} \int_\Omega (\widehat{v}_3^{\,\varepsilon})^{2} + C  \mathcal E [\widehat u^{\,\varepsilon}] + C\varepsilon^2 \int_\Omega |\partial_t v_3|^2  , 
\end{aligned} 
\end{equation*}
and so,
\begin{align*}
\mathcal E [\widehat u^{\,\varepsilon}]+  \sum_{i=1}^3 \intQT |\nabla \widehat{u}_i^{\,\varepsilon}|^2 \le  C_T  \left( \varepsilon^2 \intQT |\partial_t v_3|^2 + \varepsilon  \int_\Omega (\widehat{v}_3^{\,\varepsilon}(0))^{2} \right),  
\end{align*}
where $\mathcal E[\widehat u^{\,\varepsilon}]:=\mathcal E_{1}[\widehat u^{\,\varepsilon}]$, see Lemma~\ref{Lem.Rate.EstimateF}. Note that a relevant  estimate for the time derivative $\partial_t v_3$ needs to be carried out, see Lemma~\ref{Lem.Rate.TimeDeriOfv3}, for which we recall from the limiting system that this derivative is missing in the equation for $v_3$. Eventually, estimates for $L^\infty$-in-time convergence rates require careful treatment of the initial layer, which will be explained in Remark~\ref{Remark.Theo.ConvergenceRate} after the statement of the third main result.

\begin{theorem}[Convergence rates and the initial layer effects] 
\label{Theo.ConvergenceRate}
Let $(u_i^\varepsilon,v_i^\varepsilon)_{i=1,2,3}$ and $(u_i,v_i)_{i=1,2,3}$ be the global classical solutions to  \eqref{System.UV}-\eqref{Condition.Boun-Init.Epsilon}, for each $\varepsilon>0$, and \eqref{System.UV.Limit}-\eqref{Condition.Boun-Init.Limit}, respectively. Denote  
\begin{align}
\varepsilon_{\mathsf{in}} := \|\lambda_3 \Delta v_{30} - \mu_3 v_{30} + u_{30}\|_{L^2(\Omega)} .
\label{Assumption.InitialLayer}
\end{align}
\begin{itemize}
\item[a)] The followings hold for $i=1,2,3$ and $j=1,2$, 
\begin{equation}
\left\{ \,
\begin{gathered}
 \|u_i^\varepsilon-u_i\|_{L^\infty(0,T;L^2(\Omega))}   \le C_T  \sqrt{\varepsilon} \left( \varepsilon_{\mathsf{in}} + \sqrt{\varepsilon} \right),  \\
 \|u_i^\varepsilon-u_i\|_{L^2(0,T;H^1(\Omega))}    \le C_T  \sqrt{\varepsilon} \left( \varepsilon_{\mathsf{in}} + \sqrt{\varepsilon} \right), \\
 \|v_j^\varepsilon-v_j\|_{L^\infty(0,T;H^2(\Omega))}   \le C_T  \sqrt{\varepsilon} \left( \varepsilon_{\mathsf{in}} + \sqrt{\varepsilon} \right),
\end{gathered}\right.
\label{Theo.ConvergenceRate.a.State1}
\end{equation}
and 
\begin{equation}
\|v_3^\varepsilon-v_3\|_{L^\infty(0,T;H^1(\Omega))} + \|v_3^\varepsilon-v_3\|_{L^2(0,T;H^2(\Omega))} \le C_T \left( \varepsilon_{\mathsf{in}} +  \varepsilon  \right) .
\label{Theo.ConvergenceRate.a.State2}
\end{equation}

\item[b)] For any $2\le q<\infty$ and $i=1,2,3$, $j=1,2$, 
\begin{equation}
\left\{\,\begin{aligned}
 \|u_i^\varepsilon-u_i\|_{L^\infty(0,T;L^q(\Omega))} \hspace*{0.3cm} & \le C_{q,T} \, \varepsilon^{\frac{1}{q}} \Big(\varepsilon_{\mathsf{in}}^{\frac{2}{q}} + \varepsilon^{\frac{1}{q}}\Big),  \\
   \|v_j^\varepsilon-v_j\|_{L^\infty(0,T;W^{2,q}(\Omega))} &\le C_{q,T} \, \varepsilon^{\frac{1}{q}} \Big(\varepsilon_{\mathsf{in}}^{\frac{2}{q}} + \varepsilon^{\frac{1}{q}}\Big).
\end{aligned}\right.
\label{Theo.ConvergenceRate.b.State1}
\end{equation}
If $(u_{30},v_{30})\in W^{2,q}(\Omega) \times W^{4,q}(\Omega)$, then  
\begin{align}
\|v_3^\varepsilon-v_3\|_{L^q(0,T;W^{2,q}(\Omega))} \le C_T \, \varepsilon^{\frac{1}{q}}  \Big(  \widehat{\varepsilon}_{\mathsf{in}} +  \varepsilon_{\mathsf{in}}^{\frac{2}{q}} + \varepsilon^{\frac{1}{q}} \Big), 
\label{Theo.ConvergenceRate.b.State2}
\end{align}
where 
\begin{align*}
\widehat{\varepsilon}_{\mathsf{in}} := \|\lambda_3 \Delta v_{30} - \mu_3 v_{30} + u_{30}\|_{H^2(\Omega)} .
\end{align*}

\item[c)] If $(u_{30},v_{30})\in W^{2,4^+}(\Omega) \times W^{4,4^+}(\Omega)$, then   
\begin{align}
\sum_{i=1}^3 \|u_i^\varepsilon-u_i\|_{L^\infty(Q_T)} \le C_T \, \varepsilon^{(\frac{1}{4})^-}  \Big(  \widehat{\varepsilon}_{\mathsf{in}} +  \varepsilon_{\mathsf{in}}^{(\frac{1}{2})^-} + \varepsilon^{(\frac{1}{4})^-} \Big). 
\label{Theo.ConvergenceRate.b.State3}
\end{align}

\end{itemize}

\end{theorem}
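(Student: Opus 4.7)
The plan is to prove parts (a)--(c) sequentially, where (a) is the core $L^2$-based estimate obtained by energy methods on the rate system \eqref{System.Rate.UV}, and (b) and (c) follow by interpolation and parabolic maximal regularity.

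\textbf{Part (a).} I would combine the two differential inequalities already recorded before the statement---namely the energy identity for $\mathcal E[\widehat u^{\,\varepsilon}]:=\mathcal E_{1}[\widehat u^{\,\varepsilon}]$ from Lemma \ref{Lem.Rate.EnergyE} with $n=1$ and the bound on $\mathcal F[\widehat v^{\,\varepsilon}]$ from Lemma \ref{Lem.Rate.EstimateF}---and add them with a suitable weight so that $\mathcal F$ is absorbed. Three ingredients then close the Gronwall loop: (i) the uniform-in-$\varepsilon$ bound on $\|\partial_t v_3\|_{L^2(Q_T)}$ from Lemma \ref{Lem.Rate.TimeDeriOfv3}, obtained by differentiating the limiting elliptic equation $-\lambda_3\Delta v_3+\mu_3 v_3=u_3$ in time and using $\partial_t u_3\in L^2(Q_T)$; (ii) the identity \eqref{Expression.InitialLayer}, which expresses the initial layer as $\widehat v_3^{\,\varepsilon}(0)=-(-\lambda_3\Delta+\mu_3)^{-1}(\lambda_3\Delta v_{30}-\mu_3 v_{30}+u_{30})$, so that the $L^2$-boundedness of the Neumann resolvent forces $\|\widehat v_3^{\,\varepsilon}(0)\|_{L^2(\Omega)}\le C\varepsilon_{\mathsf{in}}$ (and $\|\widehat v_3^{\,\varepsilon}(0)\|_{H^1(\Omega)}\le C\varepsilon_{\mathsf{in}}$ by elliptic regularity); (iii) $\widehat u_i^{\,\varepsilon}(0)=0$. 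Gronwall then produces the first two estimates of \eqref{Theo.ConvergenceRate.a.State1}. The elliptic $H^2$-theory applied to $-\lambda_j\Delta\widehat v_j^{\,\varepsilon}+\mu_j\widehat v_j^{\,\varepsilon}=\widehat u_j^{\,\varepsilon}$ for $j=1,2$ passes the rate to $\widehat v_j^{\,\varepsilon}$ in $L^\infty(0,T;H^2(\Omega))$. Estimate \eqref{Theo.ConvergenceRate.a.State2} is obtained separately by testing the parabolic rate equation for $\widehat v_3^{\,\varepsilon}$ with a combination of $\widehat v_3^{\,\varepsilon}$ and $-\Delta\widehat v_3^{\,\varepsilon}$, exploiting the slow-evolution structure to keep the initial-layer contribution additively of order $\varepsilon_{\mathsf{in}}$ and the forcing of order $\varepsilon$.

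\textbf{Part (b).} For $\widehat u_i^{\,\varepsilon}$ I would rely on the interpolation
\[
\|\widehat u_i^{\,\varepsilon}\|_{L^q(\Omega)}\le \|\widehat u_i^{\,\varepsilon}\|_{L^2(\Omega)}^{2/q}\,\|\widehat u_i^{\,\varepsilon}\|_{L^\infty(\Omega)}^{1-2/q},
\]
combined with the uniform $L^\infty(Q_T)$-bound of $\widehat u_i^{\,\varepsilon}$ that follows from \eqref{Theo.FastSignalDiffLimit.State1} and the regularity \eqref{Theo.FastSignalDiffLimit.State3} of the limit, together with the subadditivity $(\varepsilon_{\mathsf{in}}+\sqrt\varepsilon)^{2/q}\le \varepsilon_{\mathsf{in}}^{2/q}+\varepsilon^{1/q}$ valid for $q\ge2$. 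The rate for $\widehat v_j^{\,\varepsilon}$, $j=1,2$, then follows by elliptic $W^{2,q}$-regularity. For $\widehat v_3^{\,\varepsilon}$ in $L^q(0,T;W^{2,q}(\Omega))$ I would apply the parabolic $L^q$-maximal regularity with slow evolution (Lemma \ref{Lem.MRSlowEvolution}) to $\varepsilon\partial_t\widehat v_3^{\,\varepsilon}-\lambda_3\Delta\widehat v_3^{\,\varepsilon}+\mu_3\widehat v_3^{\,\varepsilon}=\widehat u_3^{\,\varepsilon}-\varepsilon\partial_t v_3$: the source is controlled in $L^q(Q_T)$ by the part-(b) rate for $\widehat u_3^{\,\varepsilon}$ plus $\varepsilon\|\partial_t v_3\|_{L^q(Q_T)}$, the latter being uniformly bounded once $(u_{30},v_{30})\in W^{2,q}(\Omega)\times W^{4,q}(\Omega)$. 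The nonzero initial datum $\widehat v_3^{\,\varepsilon}(0)$ is split off as the solution of the homogeneous problem and absorbed into the factor $\widehat\varepsilon_{\mathsf{in}}$, giving \eqref{Theo.ConvergenceRate.b.State2}.

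\textbf{Part (c).} I would then read the first three equations of \eqref{System.Rate.UV} as standard parabolic problems for $\widehat u_i^{\,\varepsilon}$ with zero initial data, with the chemotactic terms $\chi_i\di(\widehat u_i^{\,\varepsilon}\nabla v_3^\varepsilon)$, $\chi_i\di(u_i\nabla\widehat v_3^{\,\varepsilon})$, and the reaction difference $\widehat f_i^{\,\varepsilon}$ moved to the right-hand side. Using the uniform $W^{1,\infty}$-bounds from Theorem \ref{Theo.FastSignalDiffLimit} on $u_i,v_j^\varepsilon$ and the part-(b) rates for $\widehat u_i^{\,\varepsilon}$ and $\widehat v_i^{\,\varepsilon}$, these right-hand sides are controlled in $L^q(Q_T)$. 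Applying Lemma \ref{Lem.ParaMaximalRegularity} with $q=4^+$ yields a bound on $\widehat u_i^{\,\varepsilon}$ in $W^{2,1}_q(Q_T)$ of the order $\varepsilon^{1/q}(\widehat\varepsilon_{\mathsf{in}}+\varepsilon_{\mathsf{in}}^{2/q}+\varepsilon^{1/q})$, and the parabolic Sobolev embedding $W^{2,1}_q(Q_T)\hookrightarrow L^\infty(Q_T)$, valid in two dimensions whenever $q>(N+2)/2=2$, delivers \eqref{Theo.ConvergenceRate.b.State3}. The threshold $q=4^+$ is dictated by the target exponent $(1/4)^-$ and accounts for the regularity requirement $(u_{30},v_{30})\in W^{2,4^+}\times W^{4,4^+}$.

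\textbf{Main obstacle.} The delicate point throughout is the initial layer: because $v_3(0)\neq v_{30}$ in general, $\widehat v_3^{\,\varepsilon}(0)$ is a non-vanishing datum that injects an $\varepsilon_{\mathsf{in}}$-contribution into every bound, while the extra source $\varepsilon\partial_t v_3$ in the rate equation is small only once the non-trivial uniform bound on $\partial_t v_3$ (Lemma \ref{Lem.Rate.TimeDeriOfv3}) is in hand---recall that this derivative does not even appear in the limit equation for $v_3$. Keeping these two error sources additive, as $\varepsilon_{\mathsf{in}}+\varepsilon$ (or their interpolated versions in (b)--(c)), rather than letting them multiply or appear with the wrong power of $\varepsilon$, is the bookkeeping that drives the choice of exponents and of initial-data spaces in all three parts.
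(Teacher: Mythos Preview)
Your Parts (a) and (b) are correct. Part (a) matches the paper essentially line by line. In Part (b) you take a shortcut the paper does not: instead of re-running the energy argument for $\mathcal E_n$ with $n>1$ (Lemma~\ref{Lem.Rate.EnergyE}) and then interpolating between $L^{2(n-1)}(\Omega)$ and $L^{2n}(\Omega)$, you interpolate directly between the $L^\infty(0,T;L^2)$-rate from Part (a) and the uniform $L^\infty(Q_T)$-bound furnished by Theorem~\ref{Theo.FastSignalDiffLimit}. This is cleaner and gives the same outcome; the only cost is that you import the $L^\infty$ bound as a black box rather than staying within the energy framework. One minor inaccuracy: the $L^q$-bound on $\partial_t v_3$ from Lemma~\ref{Lem.Rate.TimeDeriOfv3} holds for every $q$ under the standing Assumption~\ref{Assumption.Initial1} and does not itself require $(u_{30},v_{30})\in W^{2,q}\times W^{4,q}$; that extra regularity is needed only to control $\|\Delta\widehat v_3^{\,\varepsilon}(0)\|_{L^q}$ through the identity \eqref{Expression.InitialLayer}.

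Part (c), however, has a genuine gap. You propose to move all chemotactic terms to the right-hand side and apply the constant-coefficient maximal regularity of Lemma~\ref{Lem.ParaMaximalRegularity}, which requires the full right-hand side to lie in $L^q(Q_T)$. But expanding $\chi_i\,\nabla\!\cdot(\widehat u_i^{\,\varepsilon}\nabla v_3^\varepsilon)$ produces the term $\chi_i\,\nabla\widehat u_i^{\,\varepsilon}\cdot\nabla v_3^\varepsilon$, and Parts (a)--(b) only control $\nabla\widehat u_i^{\,\varepsilon}$ in $L^2(Q_T)$. Since $\nabla v_3^\varepsilon$ is merely in $L^r(Q_T)$ for finite $r$ uniformly in $\varepsilon$ (Theorem~\ref{Theo.FastSignalDiffLimit} gives $W^{1,\infty}$ for the \emph{limit} $v_3$ and for $v_1^\varepsilon,v_2^\varepsilon$, but not for $v_3^\varepsilon$), the product cannot be placed in $L^{4^+}(Q_T)$ with the required rate; the same obstruction arises for $\nabla\widehat u_3^{\,\varepsilon}\cdot\nabla v_j^\varepsilon$, $j=1,2$. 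The paper avoids this by working with the Duhamel representation and the semigroup--divergence estimate of Lemma~\ref{Lem.HeatCombineDiv}, which bounds $\|(-d_i\Delta+I)^\sigma e^{t(d_i\Delta-I)}\nabla\!\cdot w\|_{L^q}$ directly by $\|w\|_{L^q}$; in that setup only the flux $\widehat u_i^{\,\varepsilon}\nabla v_3^\varepsilon$ (not its divergence) needs to be controlled in $L^q$, and that follows by H\"older from the Part-(b) rate on $\widehat u_i^{\,\varepsilon}$ together with the uniform $L^r$-bound on $\nabla v_3^\varepsilon$. Your strategy could be repaired either by switching to this semigroup argument or by keeping $\chi_i\nabla v_3^\varepsilon\cdot\nabla\widehat u_i^{\,\varepsilon}$ on the left as a lower-order perturbation and invoking a variable-coefficient maximal regularity theorem, but the bare appeal to Lemma~\ref{Lem.ParaMaximalRegularity} does not close as written.
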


\begin{remark}  
\label{Remark.Theo.ConvergenceRate}
Let us comment on the initial layer. Due to the expression \eqref{Expression.InitialLayer}, if  $(u_{30},v_{30})$  belongs to the critical manifold $\mathcal C$, that is defined at \eqref{CriMani}, 
then $\varepsilon_{\mathsf{in}}=0$ and  $v_3(0)=v_{30}$ (and so, $v_{3}(0)=v_3^\varepsilon(0)$). Otherwise, $\varepsilon_{\mathsf{in}}>0$ and $v_{3}(0)\ne v_3^\varepsilon(0).$ 
If $\varepsilon_{\mathsf{in}}$ is small enough (compared to $\varepsilon$), namely, the dynamics of \eqref{System.UV}-\eqref{Condition.Boun-Init.Epsilon} starts close to the critical manifold $\mathcal C$, then the estimate \eqref{Theo.ConvergenceRate.a.State2} is meaningful.
On the other hand,  \eqref{Theo.ConvergenceRate.a.State1}, \eqref{Theo.ConvergenceRate.b.State1}-\eqref{Theo.ConvergenceRate.b.State3} reveal that the initial layer does not affect the convergence of $(\widehat{u}_1^{\,\varepsilon},\widehat{u}_2^{\,\varepsilon},\widehat{u}_3^{\,\varepsilon},\widehat{v}_1^{\,\varepsilon},\widehat{v}_2^{\,\varepsilon})$, but improves their convergence rates if $\varepsilon_{\mathsf{in}}$, $\widehat{\varepsilon}_{\mathsf{in}}$ are small enough. 
\end{remark}

Theorem~\ref{Theo.ConvergenceRate} reveals interesting effects of the initial layer on the $L^\infty$-in-time convergence rates, where the distance from the initial data to the critical manifold $\mathcal C$ is crucial besides the smallness of the relaxation time $\varepsilon$. In this paper, we also demonstrate this effect numerically, as shown in Table~\ref{Fig:RateOrders}. In the following theorem, we will estimate this distance from the trajectory to $\mathcal C$ in $L^q(Q_T)$ for $1<q<\infty$ and $L^\infty(0,T;L^2(\Omega))$, where the latter means that we can shift the initial time to any time $t\in (0,T)$ with the distance $\varepsilon_t$ and obtain similar estimates to Theorem~\ref{Theo.ConvergenceRate}.

\begin{theorem} [Distance from trajectories to the critical manifold]
\label
{Coro.ConvToCritMani} 
Let $(u_i^\varepsilon,v_i^\varepsilon)_{i=1,2,3}$, for each $\varepsilon>0$, and $(u_i,v_i)_{i=1,2,3}$ be given by Theorem~\ref{Theo.ConvergenceRate}.
\begin{itemize}
    \item[a)] If $(u_{30},v_{30})\in W^{1,q}(\Omega) \times W^{3,q}(\Omega)$, then  
\begin{align}
\|\lambda_3\Delta v_3^\varepsilon - \mu_3 v_3^\varepsilon + u_3^\varepsilon \|_{L^q(Q_T)}   \le C_T \, \varepsilon^{\frac{1}{q}}  \Big(  \widehat{\varepsilon}_{\mathsf{in}} +  \varepsilon_{\mathsf{in}}^{\frac{2}{q}} + \varepsilon^{\frac{1}{q}} \Big)  
\label{Theo.ConvToCritMani.State1}
\end{align}
for any $2\le q<\infty$. 

    \item[b)] (Shifting the initial layer) It holds 
    \begin{align}
    \varepsilon_t:= \|\lambda_3\Delta v_3^\varepsilon(t) - \mu_3 v_3^\varepsilon(t) + u_3^\varepsilon(t) \|_{L^2(\Omega)} \le C_T(\varepsilon_{\mathsf{in}}+\varepsilon)
    \label{Distance:FurtherComment}
\end{align}
for any $0<t<T$.
\end{itemize}
\end{theorem}

Finally, we present some numerical results in Section \ref{sec:6} with comparison of the solutions to the $\ varepsilon$-depending and limiting systems, including the initial layer effect as mentioned earlier, the difference between the systems with one or two prey, as well as their dynamics.  
  
\section{Global existence of classical solution}\label{sec-strong-solu} 

We will prove the global existence of a unique classical solution to \eqref{System.UV}-\eqref{Condition.Boun-Init.Epsilon} for each $\varepsilon>0$, presented in Theorem~\ref{Theo.GlobalClassicalSol}, where the concept of  classical solution is given below.

\begin{definition}\label{Definition.Solution}
The vector of functions $(u_i,v_i)_{1\le i\le 3}$ is called
a global classical solution to Problem \eqref{System.UV}-\eqref{Condition.Boun-Init.Epsilon} if, for any $T>0$, $$ (u_i,v_i)_{1\le i\le 3} \in C(\overline{\Omega}\times[0,T))^3 \cap C^{2,1}(\Omega \times (0,T))^3$$ and \eqref{System.UV}-\eqref{Condition.Boun-Init.Epsilon} are pointwise satisfied.
\end{definition}
  By standard arguments of fixed point theorems, see e.g.  \cite{winkler2010boundedness}, the existence of a local classical, non-negative solution $(u_i^\varepsilon,v_i^\varepsilon)$ can be accomplished up to the maximal time  $0<T_{\max}\le \infty$ such that 
\begin{align}
\label{Lem.GlobalExistenceCriteria}
T_{\max}=\infty \quad \text{or} \quad \left( \lim_{t \to \Tm^-} \sum_{i=1}^3 \|u_i^\varepsilon(t)\|_{L^\infty(\Omega)} = \infty \,\text{ if } \Tm<\infty \right).
\end{align}
Our goal in this section is to prove  $T_{\max}=\infty$ using the criteria \eqref{Lem.GlobalExistenceCriteria}. 

\subsection{Energy estimate}

 We first note that, by the rescaling \eqref{ScaledParameters}, the kinetics $f_i(u_1^\varepsilon,u_2^\varepsilon,u_3^\varepsilon)$, for $i=1,2,3$, are  
\begin{align*}
f_1(u_1^\varepsilon,u_2^\varepsilon,u_3^\varepsilon) =&\, \alpha_1u_1^\varepsilon(1-u_1^\varepsilon - \beta_1  u_2^\varepsilon) -  \dfrac{m_1 u_1^\varepsilon}{\eta_1+u_1^\varepsilon}u_3^\varepsilon,  \\
f_2(u_1^\varepsilon,u_2^\varepsilon,u_3^\varepsilon) =&\,\alpha_2u_2^\varepsilon(1-u_2^\varepsilon - \beta_2u_1^\varepsilon) -  \dfrac{m_2u_2^\varepsilon}{\eta_2+u_2^\varepsilon} u_3^\varepsilon,  \\
f_3(u_1^\varepsilon,u_2^\varepsilon,u_3^\varepsilon) =&\, \left(\gamma_1 \dfrac{m_1 u_1^\varepsilon}{\eta_1+u_1^\varepsilon} + \gamma_2 \dfrac{m_2 u_2^\varepsilon}{\eta_2+u_2^\varepsilon} -k \right) u_3^\varepsilon -l(u_3^\varepsilon)^2,
\end{align*}
where 
\begin{align*}
(\alpha_i,m_i,k,l) := \frac{1}{\max \{\widetilde d_1,\widetilde d_2,\widetilde d_3\}} (\widetilde\alpha_i,\widetilde m_i,\widetilde k,\widetilde l), \quad i=1,2.
\end{align*}
One can observe that   
\begin{align*}
 f_i(u_1^\varepsilon,u_2^\varepsilon,u_3^\varepsilon) &\le \alpha_i  (u_i^\varepsilon -(u_i^\varepsilon)^2), \quad i=1,2, \\ 
 f_3(u_1^\varepsilon,u_2^\varepsilon,u_3^\varepsilon)  &\le   (\gamma_1m_1+\gamma_2m_2)u_3^\varepsilon -l(u_3^\varepsilon)^2.
\end{align*}
Therefore, by integrating the equations for $u_i$, $1\le i\le 3$, over the domain $\Omega$, we obtain the estimate for the total mass 
\begin{align*}
\sum_{i=1}^3  \int_\Omega u_i^\varepsilon(t)  +  \sum_{i=1}^3  \intQTm (u_i^\varepsilon)^2  \le C_T, \quad 0\le t< \Tm.
\end{align*}
However, this regularity is not strong enough in the following sense: feedback from $(u_i^\varepsilon)$ to $(v_i^\varepsilon)$ using the equations for $(v_i^\varepsilon)$, and then from $(v_i^\varepsilon)$ to $(u_i^\varepsilon)$ using the equations for $(u_i^\varepsilon)$ is not enough to improve the regularity of $(u_i^\varepsilon)$ again. To improve the solution regularity, an a priori estimate will be obtained by utilising the energy function \eqref{Energy.FunctionDef}.

\begin{lemma}[Energy estimate] \label{Lem.EnergyEstimate} Let $\varepsilon>0$, and $T\in (0,\infty)$ such that $T \le \Tm$. Then, for $t\in (0,T)$ and $1<p<\infty$, 
\begin{equation}
\begin{aligned}
& E_p^\varepsilon(t) +  (p-1) \sum_{i=1}^3  \intQt   (u_i^\varepsilon)^{p-2} |\nabla u_i^\varepsilon|^2 +  \sum_{i=1}^3 \intQt  (u_i^\varepsilon)^{p+1} \\
& \le C^\varepsilon_{p,(u_{i0}),v_{30}} + C_p \int_0^t E_p^\varepsilon(s)  + (p-1) C_{p}^\varepsilon \sum_{i=1}^3 \intQt (u_i^\varepsilon)^{p+1},
\end{aligned}
\label{Lem.EnergyEstimate.State1}
\end{equation}
where the constants depend on fixed parameters of the problem and $p,\varepsilon$, but not on time. In particular, $C^\varepsilon_{p,(u_{i0}),v_{30}}$ also depends on $\|(u_{i0})\|_{L^p(\Omega)^3}$ and $\|v_{30}\|_{W^{2,p+1}(\Omega)}$. Moreover, $C_{p}^\varepsilon$ satisfies    
\begin{align}
\lim_{p\searrow1} (p-1)C_{p}^\varepsilon=0. \label{Lem.EnergyEstimate.Limit}
\end{align}
\end{lemma}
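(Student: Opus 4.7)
My plan is to test each $u_i$-equation in \eqref{System.UV} against $(u_i^\varepsilon)^{p-1}$, exploit the dissipative structure of the reactions $f_1,f_2,f_3$ to generate the $L^{p+1}$-term on the left, and control the chemotactic cross-terms through the $L^{p+1}$-maximal regularity of the chemical equations. The decisive feature will be that every chemotactic integral carries an explicit prefactor $(p-1)/p$, which is exactly what produces the coefficient $(p-1)C_p^\varepsilon$ on the right and hence \eqref{Lem.EnergyEstimate.Limit}. Multiplying by $(u_i^\varepsilon)^{p-1}$ and integrating over $\Omega$ (with the no-flux conditions killing all boundary terms), the time derivative produces $\tfrac{1}{p}\tfrac{d}{dt}\int_\Omega(u_i^\varepsilon)^p$, the self-diffusion yields $d_i(p-1)\int_\Omega(u_i^\varepsilon)^{p-2}|\nabla u_i^\varepsilon|^2$, and two integrations by parts recast each chemotactic term as $\pm\tfrac{p-1}{p}\chi_\ast\int_\Omega (u_i^\varepsilon)^p\Delta v_j^\varepsilon$ with $j\in\{1,2,3\}$. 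The explicit form of the reactions yields $f_i(u^\varepsilon)(u_i^\varepsilon)^{p-1}\le\alpha_i(u_i^\varepsilon)^p-\alpha_i(u_i^\varepsilon)^{p+1}$ for $i=1,2$ and the corresponding inequality with dissipative coefficient $l>0$ for $f_3$, so $c_0\sum_i\int_\Omega(u_i^\varepsilon)^{p+1}$ with $c_0=\min(\alpha_1,\alpha_2,l)>0$ appears on the left.

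The chemotactic integrals are then controlled by H\"older in $Q_t$ with conjugate exponents $(p+1)/p$ and $p+1$:
\[
\left|\iint_{Q_t}(u_i^\varepsilon)^p\Delta v_j^\varepsilon\right|\le\|u_i^\varepsilon\|_{L^{p+1}(Q_t)}^p\,\|\Delta v_j^\varepsilon\|_{L^{p+1}(Q_t)},
\]
followed by the $L^{p+1}$-maximal regularity. For $j=1,2$, the elliptic Lemma~\ref{Lem.MaximalRegularity} (with $p$-independent constant) gives pointwise-in-time $\|\Delta v_j^\varepsilon(\cdot,s)\|_{L^{p+1}(\Omega)}\le C\|u_j^\varepsilon(\cdot,s)\|_{L^{p+1}(\Omega)}$, which I integrate in time. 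For $j=3$, the slow-evolution parabolic Lemma~\ref{Lem.ParaMaximalRegularity} yields
\[
\|\Delta v_3^\varepsilon\|_{L^{p+1}(Q_t)}\le C^\varepsilon\bigl(\|u_3^\varepsilon\|_{L^{p+1}(Q_t)}+\|v_{30}\|_{W^{2,p+1}(\Omega)}\bigr),
\]
with $C^\varepsilon$ depending on $\varepsilon$ but \emph{not} on $p$. Young's inequality then bounds each such term by $C_p^\varepsilon\sum_k\iint_{Q_t}(u_k^\varepsilon)^{p+1}+C_p^\varepsilon\|v_{30}\|_{W^{2,p+1}(\Omega)}^{p+1}$ with $C_p^\varepsilon$ uniformly bounded as $p\searrow 1$; the preserved $(p-1)/p$ factor produces the claimed $(p-1)C_p^\varepsilon\sum_k\iint_{Q_t}(u_k^\varepsilon)^{p+1}$ on the right.

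Summing over $i=1,2,3$, integrating in time on $(0,t)$, absorbing $\sum_i\|u_{i0}\|_{L^p(\Omega)}^p$ and $\|v_{30}\|_{W^{2,p+1}(\Omega)}^{p+1}$ into $C^\varepsilon_{p,(u_{i0}),v_{30}}$, and identifying the $\alpha_i\int_\Omega(u_i^\varepsilon)^p$ reaction remainders with the $C_p\int_0^t E_p^\varepsilon(s)\,ds$ term produces \eqref{Lem.EnergyEstimate.State1}; the limit \eqref{Lem.EnergyEstimate.Limit} then follows since $C_p^\varepsilon$ stays bounded as $p\searrow 1$. The main obstacle is the $v_3^\varepsilon$-contribution: the slow evolution $\varepsilon\partial_t v_3^\varepsilon$ forces the use of parabolic rather than elliptic regularity, which is what introduces the $\varepsilon$-dependence in $C_p^\varepsilon$ and the $W^{2,p+1}(\Omega)$-datum norm inside $C^\varepsilon_{p,(u_{i0}),v_{30}}$; verifying that the parabolic maximal regularity constants remain bounded as $p\searrow 1$ (the $p$-uniform content of Lemma~\ref{Lem.ParaMaximalRegularity}) is the critical technical input that preserves the structural role of the $(p-1)$ factor.
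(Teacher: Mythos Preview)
Your proposal is correct and follows essentially the same route as the paper: test with $(u_i^\varepsilon)^{p-1}$, extract the $(p-1)$ prefactor from the chemotactic cross-terms via integration by parts, and close via the $p$-independent elliptic and parabolic $L^{p+1}$-maximal regularity of Lemmas~\ref{Lem.MaximalRegularity}--\ref{Lem.ParaMaximalRegularity}. The only cosmetic difference is that the paper applies Young's inequality directly to $\iint (u_i^\varepsilon)^p\,\Delta v_j^\varepsilon$ (splitting into $(u_i^\varepsilon)^{p+1}$ and $|\Delta v_j^\varepsilon|^{p+1}$) rather than H\"older followed by Young, and it handles the $v_3^\varepsilon$-equation by first rescaling to $\partial_t v_3^\varepsilon-(\lambda_3/\varepsilon)\Delta v_3^\varepsilon+(\mu_3/\varepsilon)v_3^\varepsilon=(1/\varepsilon)u_3^\varepsilon$ before invoking Lemma~\ref{Lem.ParaMaximalRegularity} with $p_0=3$ (which is precisely what makes the constant $p$-uniform on $1<p\le 2$).
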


\begin{proof} Due to the equations for $u_i^\varepsilon$, $1\le i\le 3$, in \eqref{System.UV} and integration by parts, the following computations are straightforward
\begin{align*}
\frac{dE_p^\varepsilon}{dt} =\,&  p\sum_{i=1}^2 \int_\Omega (u_i^\varepsilon)^{p-1} \left( d_i \Delta u_i^\varepsilon + \chi_i \di( u_i^\varepsilon \nabla v_3^\varepsilon)  \right) + p\sum_{i=1}^2 \int_\Omega (u_i^\varepsilon)^{p-1} f_i(u_1^\varepsilon,u_2^\varepsilon,u_3^\varepsilon) \\
+ \,& p\int_\Omega (u_3^\varepsilon)^{p-1} \Big( d_3 \Delta u_3^\varepsilon -  \di( u_3^\varepsilon \nabla (\chi_{31}  v_1^\varepsilon + \chi_{32}  v_2^\varepsilon) ) \Big) + p\int_\Omega (u_3^\varepsilon)^{p-1} f_3(u_1^\varepsilon,u_2^\varepsilon,u_3^\varepsilon)  \\
=\,&  - p\sum_{i=1}^3 d_i(p-1) \int_\Omega   (u_i^\varepsilon)^{p-2} |\nabla u_i^\varepsilon|^2  +  p\sum_{i=1}^3 \int_\Omega (u_i^\varepsilon)^{p-1} f_i(u_1^\varepsilon,u_2^\varepsilon,u_3^\varepsilon)  \\
+\,&  (p-1) \int_\Omega  \Big(  \nabla (u_3^\varepsilon)^p \cdot \nabla (\chi_{31}  v_1^\varepsilon + \chi_{32}  v_2^\varepsilon)  - \nabla( \chi_1(u_1^\varepsilon)^p + \chi_2 (u_2^\varepsilon)^p) \cdot \nabla v_3^\varepsilon \Big)  \\
=:\,& I_1^\varepsilon + I_2^\varepsilon + I_3^\varepsilon.
\end{align*}
Since the term $I_1^\varepsilon$ is non-positive, it is only necessary to deal with the remaining terms. The second term $I_2^\varepsilon$ can be estimated as 
\begin{align*}
 \int_0^t I_2^\varepsilon & \le p\sum_{i=1}^2 \left(  \alpha_i  \intQt (u_i^\varepsilon)^{p} -  \alpha_i \intQt (u_i^\varepsilon)^{p+1} \right) \\
 & + p \left( (\gamma_1m_1+\gamma_2m_2)  \intQt (u_3^\varepsilon)^{p} -  l \intQt (u_3^\varepsilon)^{p+1} \right). 
\end{align*}
On the other hand, by using the equations for $(v_i^\varepsilon)$ in \eqref{System.UV} and the Young's inequality,
\begin{align*}
\int_0^t I_3^\varepsilon =\,&\,  (p-1) \intQt \Big((u_3^\varepsilon)^p (- \Delta (\chi_{31}v_1^\varepsilon + \chi_{32}v_2^\varepsilon) )  - (\chi_1(u_1^\varepsilon)^p + \chi_2(u_2^\varepsilon)^p) (-\Delta v_3^\varepsilon) \Big) \\
\le \,&\, (p-1) \left( \frac{p}{p+1} \intQt (u_3^\varepsilon)^{p+1} + \frac{1}{p+1} \intQt (\chi_{31}^{p+1}|\Delta v_1^\varepsilon|^{p+1} + \chi_{32}^{p+1} |\Delta v_2^\varepsilon|^{p+1} ) \right) \\
+ \,&\, (p-1) \left( \frac{p}{p+1}  \intQt   ( (u_1^\varepsilon)^{p+1} +  (u_2^\varepsilon)^{p+1}) + \frac{\chi_1^{p+1}+\chi_2^{p+1}}{p+1} \intQt |\Delta v_3^\varepsilon|^{p+1} \right) .
\end{align*}
Applying the elliptic maximal regularity in Lemma~\ref{Lem.MaximalRegularity} to the equations for $v_i^\varepsilon$ gives 
$$ \intQt |\Delta v_i^\varepsilon|^{p+1} \le (C^{\mathsf{EM}})^{p+1} \intQt (u_i^\varepsilon)^{p+1}, \quad i=1,2.$$ 
While, by rewriting the equation for $v_3^\varepsilon$ as $\partial_t v_3^\varepsilon - (\lambda_3/\varepsilon) \Delta v_3^\varepsilon + (\mu/\varepsilon) v_3^\varepsilon = (1/\varepsilon)u_3^\varepsilon$, and   applying the parabolic maximal regularity  in Lemma~\ref{Lem.ParaMaximalRegularity} (with $p_0=3$),   
\begin{align*}
\intQt |\Delta v_3^\varepsilon|^{p+1} &\le (C^{\mathsf{PM},\varepsilon})^{p+1} \left( \|v_{30}\|_{W^{2,p+1}(\Omega)}^{p+1} + \intQt \left(\frac{u_3^\varepsilon}{\varepsilon}\right)^{p+1} \right) ,   
\end{align*}
where 
\begin{align*}
C^{\mathsf{PM},\varepsilon}:= 
\left\{ \begin{array}{llll}
C_{\lambda_3/\varepsilon,\mu_3/\varepsilon,3}^{\mathsf{PM}} & \text{if }  p\le 2, \vspace*{0.05cm} \\
C_{\lambda_3/\varepsilon,\mu_3/\varepsilon,p+1}^{\mathsf{PM}} & \text{if }  p>2. 
\end{array} \right.
\end{align*}
It is helpful to note that the constant $C^{\mathsf{PM},\varepsilon}$ does not depend on $t$, and additionally not on $p$ if $1<p\le 2$. Consequently, we obtain the following estimate for $I_3^\varepsilon$ 
\begin{align*}
\int_0^t I_3^\varepsilon  \le \,&\,  (p-1) \sum_{i=1}^2 \frac{p+\chi_{3i}^{p+1}(C^{\mathsf{EM}})^{p+1}}{p+1} \intQt (u_i^\varepsilon)^{p+1}  \\
+ \,&\, (p-1) \frac{p+(\chi_{1}^{p+1}+\chi_{2}^{p+1})(C^{\mathsf{PM},\varepsilon})^{p+1}}{(p+1)\varepsilon^{p+1}} \intQt (u_3^\varepsilon)^{p+1} \\
+ \,&\, (p-1) \frac{(\chi_{1}^{p+1}+\chi_{2}^{p+1})(C^{\mathsf{PM},\varepsilon})^{p+1}}{p+1} \|v_{30}\|_{W^{2,p+1}(\Omega)}^{p+1} .
\end{align*}

By plugging all the above estimates for $I_1^\varepsilon,I_2^\varepsilon,I_3^\varepsilon$ to have the corresponding estimate for $dE^{\varepsilon}_p/dt$, and then integrating the result over time, we get
\begin{equation*}
\begin{aligned}
E_p^\varepsilon(t) &  + p(p-1) \sum_{i=1}^3 d_i \intQt   (u_i^\varepsilon)^{p-2} |\nabla u_i^\varepsilon|^2 + p \min(\alpha_1;\alpha_2;l) \sum_{i=1}^3 \intQt  (u_i^\varepsilon)^{p+1} \\
& \le  E_p(0) + (p-1) \frac{(\chi_{1}^{p+1}+\chi_{2}^{p+1})(C^{\mathsf{PM},\varepsilon})^{p+1}}{p+1} \|v_{30}\|_{W^{2,p+1}(\Omega)}^{p+1} \\
&  + p \max(\alpha_1;\alpha_2;\gamma_1m_1+\gamma_2m_2) \int_0^t E_p^\varepsilon(s)   + (p-1) C_{p}^\varepsilon \sum_{i=1}^3 \intQt (u_i^\varepsilon)^{p+1},  
\end{aligned}  
\end{equation*}
where 
\begin{align*}
C_{p}^\varepsilon :=  \max\left( \sum_{i=1}^2 \frac{p+\chi_{3i}^{p+1}(C^{\mathsf{EM}})^{p+1}}{p+1}; \, \frac{p+(\chi_{1}^{p+1}+\chi_{2}^{p+1})(C^{\mathsf{PM},\varepsilon})^{p+1}}{(p+1)\varepsilon^{p+1}} \right).
\end{align*}
Here, the term $E_p^\varepsilon(0)$ does not depend on $\varepsilon$, so removing the superscript $\varepsilon$ is more suitable. Moreover, $E_p(0)$ is finite for any $1<p<\infty$ due to Assumption~\ref{Assumption.Initial1}. The energy estimate \eqref{Lem.EnergyEstimate.State1} is obtained by dividing two sides of the latter estimate by $\min(1;d_ip;p\alpha_1;p\alpha_2;pl)$. Since   $C^{\mathsf{EM}}$ and  $C^{\mathsf{PM},\varepsilon}$ are independent of $p$ as $1<p\le 2$, the limit \eqref{Lem.EnergyEstimate.Limit} is obvious. 
\end{proof}

\subsection{Feedback argument via heat regularisation}

In this part, we point out the feedback between prey and predator species.  

\begin{lemma}[Feedback from prey to predator] \label{Lem.Feedback12to3} Let $T\in (0,\infty)$, $T \le \Tm$.  If there exists $\theta_0>0$ such that
\begin{align}
\intQT \left( (u_1^\varepsilon)^{2+\theta_0} + (u_2^\varepsilon)^{2+\theta_0} \right) \le C_T^\varepsilon,
\label{Lem.Feedback12to3.State1}
\end{align}
then
\begin{align}
\label{Lem.Feedback12to3.State2}
 \intQT (u_3^\varepsilon)^{2+\theta_0}    \le  C_{T,\theta_0}^\varepsilon  +   C_{T,\theta_0}^\varepsilon \intQT \left( (u_1^\varepsilon)^{2+\theta_0} + (u_2^\varepsilon)^{2+\theta_0} \right) .
\end{align} 
\end{lemma}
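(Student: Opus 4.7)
My plan is to test only the third equation of \eqref{System.UV} with $(u_3^\varepsilon)^{\theta_0}$ and to use the elliptic equations for $v_1^\varepsilon,v_2^\varepsilon$ in order to convert the chemotactic coupling \emph{directly} into a coupling with $u_1^\varepsilon,u_2^\varepsilon$, so as to match the hypothesis \eqref{Lem.Feedback12to3.State1} without appealing to elliptic maximal regularity. Writing $p = \theta_0 > 0$ for brevity, integration by parts with the no-flux boundary conditions gives, for a.e.\ $t \in (0,T)$,
\begin{equation*}
\tfrac{1}{p+1}\tfrac{d}{dt} \intO (u_3^\varepsilon)^{p+1} + d_3 p \intO (u_3^\varepsilon)^{p-1} |\nabla u_3^\varepsilon|^2 + \tfrac{p}{p+1} \sum_{i=1}^2 \chi_{3i} \intO (u_3^\varepsilon)^{p+1} \Delta v_i^\varepsilon = \intO (u_3^\varepsilon)^p f_3(u_1^\varepsilon,u_2^\varepsilon,u_3^\varepsilon).
\end{equation*}
Inserting the elliptic relation $\lambda_i \Delta v_i^\varepsilon = \mu_i v_i^\varepsilon - \zeta_i u_i^\varepsilon$ splits the chemotactic term into the non-negative ``good'' part $\tfrac{p}{p+1}\sum_{i=1}^2 \tfrac{\chi_{3i}\mu_i}{\lambda_i}\intO (u_3^\varepsilon)^{p+1} v_i^\varepsilon$, which may be discarded from the left, and the cross term $-\tfrac{p}{p+1}\sum_{i=1}^2 \tfrac{\chi_{3i}\zeta_i}{\lambda_i}\intO (u_3^\varepsilon)^{p+1} u_i^\varepsilon$ that must be moved to the right-hand side.

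Next I would use the pointwise reaction bound $f_3(u_1^\varepsilon,u_2^\varepsilon,u_3^\varepsilon) \le (\gamma_1 m_1 + \gamma_2 m_2) u_3^\varepsilon - l (u_3^\varepsilon)^2$, already recorded in the proof of Lemma \ref{Lem.EnergyEstimate}, to produce the crucial dissipative term $-l\intO (u_3^\varepsilon)^{p+2}$ on the right. I then absorb the cross term via Young's inequality with the specifically tuned conjugate exponents $r = \tfrac{p+2}{p+1}$ and $r' = p+2$,
\[
(u_3^\varepsilon)^{p+1}\, u_i^\varepsilon \;\le\; \eta\, (u_3^\varepsilon)^{p+2} + C_\eta\, (u_i^\varepsilon)^{p+2},
\]
and similarly handle the lower-order term $\intO (u_3^\varepsilon)^{p+1}$ by Young against $\intO (u_3^\varepsilon)^{p+2}$. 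The crucial point is that the choice $p = \theta_0$ makes both the power of $u_3^\varepsilon$ to be absorbed and the power of $u_i^\varepsilon$ to be controlled equal to $p+2 = 2+\theta_0$: for $\eta$ small enough the $u_3^\varepsilon$ contributions are absorbed into $\tfrac{l}{2}\intO (u_3^\varepsilon)^{p+2}$, while the power on $u_i^\varepsilon$ matches exactly the hypothesis \eqref{Lem.Feedback12to3.State1}.

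Integrating the resulting differential inequality over $(0,T)$, discarding the non-negative contributions $d_3 p\intQT (u_3^\varepsilon)^{p-1}|\nabla u_3^\varepsilon|^2$ and the good chemotactic part, and using $\intO u_{30}^{p+1} < \infty$ from Assumption \ref{Assumption.Initial1}, yields exactly \eqref{Lem.Feedback12to3.State2}. The main conceptual obstacle is the exponent matching in Young's inequality: a more naive choice such as $p = 1$ (testing with $u_3^\varepsilon$ itself) would force us to control $(u_i^\varepsilon)^{(2+\theta_0)/\theta_0}$, which blows up as $\theta_0 \searrow 0$ and is not available from the hypothesis. Fine-tuning $p = \theta_0$ tightly couples the test function to the available regularity and closes the feedback estimate cleanly, with the elliptic structure of the $v_i^\varepsilon$-equations being used only algebraically to swap $\Delta v_i^\varepsilon$ for a combination of $v_i^\varepsilon$ and $u_i^\varepsilon$.
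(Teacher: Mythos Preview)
Your argument is correct and follows the same overall strategy as the paper: test the $u_3^\varepsilon$-equation with $(u_3^\varepsilon)^{\theta_0}$ so that both the dissipation from the logistic term and the required input from $u_1^\varepsilon,u_2^\varepsilon$ sit exactly at the exponent $2+\theta_0$. The paper phrases this as ``compute the last term of the energy function \eqref{Energy.FunctionDef} with $p=1+\theta_0$'' and then invokes Gr\"onwall.

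Where you differ, and in a way that is a mild simplification, is in the handling of the chemotactic coupling. The paper, following the template of Lemma~\ref{Lem.EnergyEstimate}, estimates $\int (u_3^\varepsilon)^{1+\theta_0}|\Delta v_i^\varepsilon|$ by Young's inequality and then appeals to the elliptic maximal regularity of Lemma~\ref{Lem.MaximalRegularity} to convert $\|\Delta v_i^\varepsilon\|_{L^{2+\theta_0}}$ into $\|u_i^\varepsilon\|_{L^{2+\theta_0}}$. You instead substitute $\lambda_i\Delta v_i^\varepsilon=\mu_i v_i^\varepsilon-\zeta_i u_i^\varepsilon$ \emph{pointwise} and exploit the sign $v_i^\varepsilon\ge 0$ to discard one piece outright; this avoids the black-box use of elliptic regularity entirely. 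Likewise, you absorb the lower-order term $\int (u_3^\varepsilon)^{1+\theta_0}$ directly into the dissipation $\tfrac{l}{2}\int(u_3^\varepsilon)^{2+\theta_0}$ via Young, whereas the paper keeps it on the right-hand side and closes with Gr\"onwall. Both routes yield \eqref{Lem.Feedback12to3.State2}; yours is slightly more self-contained, while the paper's has the advantage of reusing verbatim the machinery already set up in Lemma~\ref{Lem.EnergyEstimate}.
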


\begin{proof} This lemma can be proved similarly to Lemma~\ref{Lem.EnergyEstimate}, where we just need to compute the last term of the energy function \eqref{Energy.FunctionDef} with $p=1+\theta_0$, to see that   
\begin{equation}
\begin{aligned}
& \int_\Omega (u_3^\varepsilon)^{1+\theta_0} +  \intQt (u_3^\varepsilon)^{\theta_0-1}  |\nabla u_3^\varepsilon|^2  +   \intQt (u_3^\varepsilon)^{2+\theta_0}   \\
& \le C_{\theta_0}  +  C_{\theta_0}  \intQt \left( (u_1^\varepsilon)^{2+\theta_0} + (u_2^\varepsilon)^{2+\theta_0} \right)  + C_{\theta_0} \intQt (u_3^\varepsilon)^{1+\theta_0} .
\end{aligned}
\label{Lem.Feedback12to3.Proof2}
\end{equation}
Thanks to \eqref{Lem.Feedback12to3.State1} and the Gr\"onwall inequality, we obtain the estimate \eqref{Lem.Feedback12to3.State2}.
\end{proof}
 
\begin{lemma}[Feedback from predator to prey via heat regularisation] \label{Lem.Feedback3to12} Let $T\in (0,\infty)$, $T \le \Tm$. If there exists $\theta_*>0$ such that 
\begin{align}
\intQT  (u_3^\varepsilon)^{2+\theta_*} \le C_{T}^\varepsilon,
\label{Lem.Feedback3to12.State1}
\end{align}
then
\begin{align}
\label{Lem.Feedback3to12.State2}
\intQT \left( (u_1^\varepsilon)^{2+\frac{3}{2}\theta_*} + (u_2^\varepsilon)^{2+\frac{3}{2}\theta_*} \right) \le C_{T,\theta_*}^\varepsilon  + C_{T,\theta_*}^\varepsilon \left( \intQT (u_3^\varepsilon)^{2+\theta_*} \right)^{\frac{2+3\theta_*}{4+2\theta_*}} .
\end{align} 
\end{lemma}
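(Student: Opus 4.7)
The plan is to mirror the proof of Lemma~\ref{Lem.Feedback12to3} but now applied to the prey equations for $u_1^\varepsilon,u_2^\varepsilon$, whose chemotactic drift is governed by $\nabla v_3^\varepsilon$. The essential new ingredient is that $v_3^\varepsilon$ solves a \emph{parabolic} slow-evolution equation rather than an elliptic one, so parabolic maximal regularity (Lemma~\ref{Lem.ParaMaximalRegularity}), with its $\varepsilon$-dependent constant $C^{\mathsf{PM},\varepsilon}$, must replace the elliptic maximal regularity used before. I set $p=1+\tfrac{3}{2}\theta_\ast$, so that $p+1=2+\tfrac{3}{2}\theta_\ast$ is the target exponent. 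Testing the equation for $u_i^\varepsilon$ ($i=1,2$) against $(u_i^\varepsilon)^{p-1}$ and integrating by parts twice gives, as in Lemma~\ref{Lem.EnergyEstimate},
\[
\frac{1}{p}\frac{d}{dt}\int_\Omega(u_i^\varepsilon)^p + d_i(p-1)\int_\Omega(u_i^\varepsilon)^{p-2}|\nabla u_i^\varepsilon|^2 \le \frac{\chi_i(p-1)}{p}\Big|\int_\Omega(u_i^\varepsilon)^p\,\Delta v_3^\varepsilon\Big| + \int_\Omega(u_i^\varepsilon)^{p-1}f_i,
\]
and the kinetic contribution, as in Lemma~\ref{Lem.Feedback12to3}, provides after time integration a dissipative term $-c_p\iint_{Q_T}(u_i^\varepsilon)^{p+1}$ on the left.

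The critical step is to estimate the chemotaxis term. I would first apply spatial H\"older's inequality with conjugate exponents $s=(2+\theta_\ast)/(1+\theta_\ast)$ and $s'=2+\theta_\ast$, so that the $\Delta v_3^\varepsilon$-factor sits in $L^{2+\theta_\ast}$, exactly the integrability afforded by parabolic maximal regularity applied to the slow-evolution equation for $v_3^\varepsilon$:
\[
\|\Delta v_3^\varepsilon\|_{L^{2+\theta_\ast}(Q_T)}\le C^\varepsilon\|u_3^\varepsilon\|_{L^{2+\theta_\ast}(Q_T)} + C^\varepsilon\|v_{30}\|_{W^{2,2+\theta_\ast}(\Omega)}.
\]
The conjugate factor $\|u_i^\varepsilon\|_{L^{ps}(\Omega)}^{p}$ has $ps>p+1$ and therefore cannot be controlled by the target $\iint(u_i^\varepsilon)^{p+1}$ and the total-mass bound alone. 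I would then use the two-dimensional Gagliardo--Nirenberg inequality applied to $(u_i^\varepsilon)^{p/2}$ to interpolate $\|(u_i^\varepsilon)^{p/2}\|_{L^{2s}}$ between $\|(u_i^\varepsilon)^{p/2}\|_{L^2}=\|u_i^\varepsilon\|_{L^p}^{p/2}$ and the dissipative gradient $\|\nabla(u_i^\varepsilon)^{p/2}\|_{L^2}$, followed by a time-H\"older step and two successive applications of Young's inequality---one to absorb a small multiple of the dissipation $\iint(u_i^\varepsilon)^{p-2}|\nabla u_i^\varepsilon|^2$, the other to absorb a small multiple of $\iint(u_i^\varepsilon)^{p+1}$. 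This leads to
\[
\frac{\chi_i(p-1)}{p}\iint_{Q_T}(u_i^\varepsilon)^p|\Delta v_3^\varepsilon|\ \le\ \delta\iint_{Q_T}(u_i^\varepsilon)^{p+1}\ +\ C_{\delta,\theta_\ast}^\varepsilon\Big(\iint_{Q_T}(u_3^\varepsilon)^{2+\theta_\ast}\Big)^{\frac{2+3\theta_\ast}{4+2\theta_\ast}}\ +\ C_{\delta,\theta_\ast}^\varepsilon\int_0^t\!\int_\Omega(u_i^\varepsilon)^p.
\]
Choosing $\delta$ small enough to absorb into the $-c_p\iint(u_i^\varepsilon)^{p+1}$ on the left and invoking Gr\"onwall on $E_p^\varepsilon(t)$ then yields \eqref{Lem.Feedback3to12.State2}.

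The main obstacle is the precise appearance of the exponent $\tfrac{2+3\theta_\ast}{4+2\theta_\ast}$: the H\"older pair $(s,s')$ is forced by the integrability available for $\Delta v_3^\varepsilon$, but the conjugate factor $\|u_i^\varepsilon\|_{L^{ps}}$ lives strictly above $p+1$. It is exactly the two-dimensional Gagliardo--Nirenberg inequality, trading the total mass and gradient dissipation against higher integrability, that makes the bookkeeping of the successive Young inequalities come out with precisely the claimed power of $\iint(u_3^\varepsilon)^{2+\theta_\ast}$. The slow-evolution parameter $\varepsilon$ enters only through $C^{\mathsf{PM},\varepsilon}$ and plays no qualitative role here, since no uniform-in-$\varepsilon$ bound is being sought.
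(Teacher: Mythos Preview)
Your approach differs from the paper's in a key step: you bound the chemotaxis contribution through $\Delta v_3^\varepsilon$ (via parabolic maximal regularity, Lemma~\ref{Lem.ParaMaximalRegularity}), whereas the paper---as the lemma title already signals---uses \emph{heat regularisation} (Lemma~\ref{Lem.HeatRegularisation}) to control $\nabla v_3^\varepsilon$. The distinction matters because heat regularisation gives a genuine integrability gain: from $u_3^\varepsilon\in L^{2+\theta_*}(Q_T)$ one obtains, in two space dimensions, $\nabla v_3^\varepsilon\in L^{r}(Q_T)$ with $r=\tfrac{4(2+\theta_*)}{2-\theta_*}$, strictly larger than $2(2+\theta_*)$. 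The paper keeps the chemotaxis term in the form $\iint_{Q_T}(u_i^\varepsilon)^p|\nabla v_3^\varepsilon|^2$ (after one Young inequality absorbing half of the gradient dissipation), and then a \emph{single} H\"older step with the conjugate pair $\bigl(\tfrac{2(2+\theta_*)}{2+3\theta_*},\,\tfrac{2(2+\theta_*)}{2-\theta_*}\bigr)$ places the $u_i^\varepsilon$-factor exactly in $L^{2+\theta_*}$ once $p=1+\tfrac{3}{2}\theta_*$. The exponent $\tfrac{2+3\theta_*}{4+2\theta_*}=\tfrac{p}{2+\theta_*}$ drops out immediately; no Gagliardo--Nirenberg interpolation is needed.

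By contrast, parabolic maximal regularity only returns $\Delta v_3^\varepsilon\in L^{2+\theta_*}(Q_T)$---the same integrability as the source---so your H\"older pair forces the $u_i^\varepsilon$-factor into $L^{ps}$ with $ps>p+1$, and you then need GN plus two Young inequalities to close. You yourself flag this bookkeeping as ``the main obstacle'' but do not carry it out; depending on exactly how the interpolation and the two Young steps are arranged, the power of $\iint_{Q_T}(u_3^\varepsilon)^{2+\theta_*}$ that your route naturally produces need not equal $\tfrac{2+3\theta_*}{4+2\theta_*}$. Your scheme may well yield \emph{some} bound adequate for the bootstrap in Lemma~\ref{Lem.LpEstimate}, but it does not transparently deliver the stated exponent, and it is decidedly more involved than the paper's one-H\"older argument after heat regularisation.
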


\begin{proof} Direct computations show 
\begin{equation}
\begin{aligned}
& \sum_{i=1}^2 \left( \int_\Omega (u_i^\varepsilon(T))^p  + d_i p(p-1)  \intQT (u_i^\varepsilon)^{p-2} |\nabla u_i^\varepsilon|^2 +  p \alpha_i \intQT (u_i^\varepsilon)^{p+1} \right) \\
& \le \sum_{i=1}^2 \left( \int_\Omega u_{i0}^p +  p \alpha_i \intQT (u_i^\varepsilon)^p -  p(p-1) \intQT \chi_i (u_i^\varepsilon)^{p-1} \nabla u_i^\varepsilon \cdot \nabla v_3^\varepsilon \right)
\end{aligned}
\label{Lem.Feedback3to12.Proof1}
\end{equation}
for any $p>1$. By the Young inequality, we get
\begin{align*}
  - \intQT \chi_i (u_i^\varepsilon)^{p-1} \nabla u_i^\varepsilon \cdot \nabla v_3^\varepsilon  \le  \frac{d_i}{2} \intQT (u_i^\varepsilon)^{p-2} |\nabla u_i^\varepsilon|^2  + \frac{\chi_i^2}{2d_i} \intQT (u_i^\varepsilon)^p |\nabla v_3^\varepsilon|^2 ,
\end{align*} 
and
\begin{align*}
\sum_{i=1}^2 p \alpha_i \intQT (u_i^\varepsilon)^p \le C_{Q_T,p,\alpha_i} + \sum_{i=1}^2 \frac{p\alpha_i}{2} \intQT (u_i^\varepsilon)^{p+1}.
\end{align*}
We then imply from \eqref{Lem.Feedback3to12.Proof1} that
\begin{equation}
\begin{aligned} 
& \sum_{i=1}^2 \int_\Omega (u_i^\varepsilon)^p + \sum_{i=1}^2 \intQT (u_i^\varepsilon)^{p-2} |\nabla u_i^\varepsilon|^2 + \sum_{i=1}^2 \intQT (u_i^\varepsilon)^{p+1} \\
& \le C_p \sum_{i=1}^2 \int_\Omega u_{i0}^p + C_p \left( C_{Q_T,p,\alpha_i}  +  \sum_{i=1}^2 \intQT (u_i^\varepsilon)^p |\nabla v_3^\varepsilon|^2 \right) 
\end{aligned}
\label{Lem.Feedback3to12.Proof2}
\end{equation}
for $0<t<T$. Let us consider the regularity of the term $|\nabla v_3^\varepsilon|$ under the assumption~\eqref{Lem.Feedback3to12.State1}. The heat regularisation,  Lemma~\ref{Lem.HeatRegularisation}, can be applied to the equation
\begin{align*}
 \partial_t v_3^\varepsilon -  \frac{\lambda_3}{\varepsilon} \Delta v_3^\varepsilon +  \frac{\mu_3}{\varepsilon} v_3^\varepsilon = \frac{1}{\varepsilon} u_3^\varepsilon
\end{align*}
with $u_3^\varepsilon \in L^{2+\theta_*}(Q_T)$, such that 
\begin{align}
\intQT |\nabla v_3^\varepsilon|^{\frac{4(2+\theta_*)}{4-(2+\theta_*)}} \le C_{T,\theta_*}^\varepsilon , \, \text{ or equivalently, } \, \intQT |\nabla v_3^\varepsilon|^{4+\frac{8\theta_*}{2-\theta_*}} \le C_{T,\theta_*}^\varepsilon,
\label{Lem.Feedback3to12.Proof3}
\end{align}
where we used the convention
\begin{align*}
\frac{1}{2-\theta_*} := \left\{ 
\begin{array}{llllll}
<\infty \text{ arbitrarily} & \text{if } \theta_* = 2,\\
 \infty & \text{if } \theta_* > 2.
\end{array}
\right.
\end{align*}
By the H\"older's inequality, the last term of \eqref{Lem.Feedback3to12.Proof2} can be estimated as 
\begin{align*}
\intQT  (u_i^\varepsilon)^p |\nabla v_3^\varepsilon|^2 \le \left( \intQT ((u_i^\varepsilon)^{2+\theta_*})^{\frac{p}{1+(3/2)\theta_*}} \right)^{\frac{2+3\theta_*}{4+2\theta_*}} \left( \intQT |\nabla v_3^\varepsilon|^{4+\frac{8\theta_*}{2-\theta_*}} \right)^{\frac{2-\theta_*}{4+2\theta_*}} . 
\end{align*}
Therefore, by employing the assumption \eqref{Lem.Feedback3to12.State1} and the regularity \eqref{Lem.Feedback3to12.Proof3}, it is possible to choose $p=1+\frac{3}{2}\theta_*$ to see that 
\begin{align*}
\intQT  (u_i^\varepsilon)^{1+\frac{3}{2}\theta_*} |\nabla v_3^\varepsilon|^2 \le \left( \intQT (u_i^\varepsilon)^{2+\theta_*} \right)^{\frac{2+3\theta_*}{4+2\theta_*}} \left( \intQT |\nabla v_3^\varepsilon|^{4+\frac{8\theta_*}{2-\theta_*}} \right)^{\frac{2-\theta_*}{4+2\theta_*}} \le C_{T,\theta_*}^\varepsilon. 
\end{align*}
Consequently, letting $p=1+\frac{3}{2}\theta_*$ in \eqref{Lem.Feedback3to12.Proof2} gives 
\begin{equation*}
\begin{aligned} 
& \sum_{i=1}^2 \int_\Omega (u_i^\varepsilon)^{1+\frac{3}{2}\theta_*} + \sum_{i=1}^2 \intQT (u_i^\varepsilon)^{\frac{3}{2}\theta_*-1} |\nabla u_i^\varepsilon|^2 + \sum_{i=1}^2 \intQT (u_i^\varepsilon)^{2+\frac{3}{2}\theta_*} \\
& \le \sum_{i=1}^2 \int_\Omega u_{i0}^{1+\frac{3}{2}\theta_*}   +  C_{T,\theta_*}^\varepsilon + C_{T,\theta_*}^\varepsilon \sum_{i=1}^2 \left( \intQT (u_i^\varepsilon)^{2+\theta_*} \right)^{\frac{2+3\theta_*}{4+2\theta_*}}   ,
\end{aligned}
\end{equation*}
 which shows the estimate    \eqref{Lem.Feedback3to12.State2}. 
\end{proof}

\subsection{Smoothing effect, and global existence}

We first observe from  the limit \eqref{Lem.EnergyEstimate.Limit} in Lemma~\ref{Lem.EnergyEstimate}, that an $L^{2+}( Q_T)$ estimate for solutions can be obtained by choosing $p$ sufficiently close to $1$ from the right, cf. Lemma~\ref{Lem.L2+Estimate}. Then, in Lemma~\ref{Lem.LpEstimate}, we estimate solutions in $L^p( Q_T)$ for any $1<p<\infty$ by performing a bootstrap argument, where the feedback arguments in Lemmas~\ref{Lem.Feedback12to3}-\ref{Lem.Feedback3to12} are crucial. Finally, by the smoothing effect of the heat semigroup, we can show $\Tm=\infty$, i.e. the global existence, via the criteria \eqref{Lem.GlobalExistenceCriteria}.

\begin{lemma}[$L^{2+}$-estimate] 
\label{Lem.L2+Estimate} Let $T\in (0,\infty)$, $T \le \Tm$. There exists $\theta>0$ such that
\begin{align*}
\sum_{i=1}^3 \left( \intQT (u_i^\varepsilon)^{2+\theta} +  \intQT  |\nabla u_i^\varepsilon|^{\frac{4+2\theta}{3}} \right) \le C_{T,\theta}^\varepsilon.
\end{align*}
\end{lemma}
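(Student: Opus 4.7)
The plan is to feed the energy estimate from Lemma \ref{Lem.EnergyEstimate} with a carefully chosen exponent $p$ slightly larger than $1$, exploit the limit \eqref{Lem.EnergyEstimate.Limit} to absorb the bad $L^{p+1}$-term on the right-hand side into the corresponding one on the left, and then close via Grönwall's inequality. Finally, the gradient estimate will follow from a short Hölder interpolation that combines the already-established $L^{2+\theta}$-bound with the weighted gradient control.

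More precisely, I would apply Lemma \ref{Lem.EnergyEstimate} with $p=1+\theta$ for a parameter $\theta\in(0,1)$ to be fixed later. Since $\lim_{p\searrow 1}(p-1)C_p^\varepsilon=0$ by \eqref{Lem.EnergyEstimate.Limit}, I can pick $\theta$ small enough (depending on $\varepsilon$ but not on $t$) such that $(p-1)C_p^\varepsilon\le\tfrac12$. Then the term $(p-1)C_p^\varepsilon\sum_{i=1}^3\intQt(u_i^\varepsilon)^{p+1}$ on the right of \eqref{Lem.EnergyEstimate.State1} can be absorbed into $\sum_{i=1}^3\intQt(u_i^\varepsilon)^{p+1}$ on the left, leaving
\[
E_p^\varepsilon(t)+\tfrac{p-1}{2}\sum_{i=1}^{3}\intQt (u_i^\varepsilon)^{p-2}|\nabla u_i^\varepsilon|^2+\tfrac12\sum_{i=1}^{3}\intQt (u_i^\varepsilon)^{p+1}\le C_{p,T}^\varepsilon+C_p\int_0^t E_p^\varepsilon(s)\,ds.
\]
Grönwall's inequality applied to $E_p^\varepsilon(t)$ yields a uniform-in-$t\in[0,T)$ bound on $E_p^\varepsilon(t)$ and, after reinsertion, on $\sum_{i=1}^3\intQT(u_i^\varepsilon)^{p+1}=\sum_{i=1}^3\intQT(u_i^\varepsilon)^{2+\theta}$ as well as on $\sum_{i=1}^3\intQT(u_i^\varepsilon)^{\theta-1}|\nabla u_i^\varepsilon|^2$.

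For the gradient piece, set $q=\frac{4+2\theta}{3}\in(1,2)$ and write $|\nabla u_i^\varepsilon|^q=\bigl((u_i^\varepsilon)^{\theta-1}|\nabla u_i^\varepsilon|^2\bigr)^{q/2}(u_i^\varepsilon)^{(1-\theta)q/2}$. Hölder's inequality with conjugate exponents $\frac{2}{q}$ and $\frac{2}{2-q}$ gives
\[
\intQT|\nabla u_i^\varepsilon|^{\frac{4+2\theta}{3}}\le\left(\intQT(u_i^\varepsilon)^{\theta-1}|\nabla u_i^\varepsilon|^2\right)^{\frac{2+\theta}{3}}\left(\intQT(u_i^\varepsilon)^{(1-\theta)q/(2-q)}\right)^{\frac{1-\theta}{3}},
\]
and a direct computation yields $(1-\theta)q/(2-q)=2+\theta$, so the second factor is exactly the $L^{2+\theta}(Q_T)$-norm already controlled, and we conclude.

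The only genuine obstacle is ensuring that the absorption step is legitimate for a single choice of $\theta$ independent of $t\in(0,T)$. This is guaranteed by \eqref{Lem.EnergyEstimate.Limit}, which is in turn a consequence of the fact that the maximal-regularity constants $C^{\mathsf{EM}}$ and $C^{\mathsf{PM},\varepsilon}$ used inside the proof of Lemma \ref{Lem.EnergyEstimate} stay bounded as $p\searrow 1$; the $\varepsilon$-dependence is harmless since the target estimate itself is only required to hold for each fixed $\varepsilon>0$.
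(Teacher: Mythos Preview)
Your proof is correct and follows essentially the same approach as the paper: choose $p=1+\theta$ with $\theta$ small enough (via \eqref{Lem.EnergyEstimate.Limit}) to absorb the bad $L^{p+1}$-term, apply Gr\"onwall, and then extract the gradient bound. The only cosmetic difference is that the paper uses Young's inequality for the gradient step whereas you use H\"older, but the two are equivalent here (your exponent computation $(1-\theta)q/(2-q)=2+\theta$ is exactly what makes Young/H\"older match up).
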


\begin{proof} Let $C_{p}^\varepsilon$ be the constant given by Lemma~\ref{Lem.EnergyEstimate}. Thanks to the limit \eqref{Lem.EnergyEstimate.Limit}, we can find $\theta>0$ such that, with $p=1+\theta$,
$$(p-1)C_{p}^\varepsilon < 1.$$
Integrating the energy estimate \eqref{Lem.EnergyEstimate.State1} over time gives
\begin{gather*}
E_{1+\theta}^\varepsilon(t) +  \sum_{i=1}^3 \left( \intQt \frac{|\nabla u_i^\varepsilon|^2}{(u_i^\varepsilon)^{1-\theta}} + \intQt  (u_i^\varepsilon)^{2+\theta} \right) \le C_{\theta,T}^\varepsilon + C_{\theta,T}^\varepsilon \int_0^t E_{1+\theta}^\varepsilon(s)   . 
\end{gather*}
Therefore, the Gr\"onwall's inequality yields that $E_{1+\theta}^\varepsilon$ is bounded on  $(0,T)$. Moreover, by applying the Young's inequality,  we have 
\begin{align*}
\intQT  \frac{|\nabla u_i^\varepsilon|^2}{(u_i^\varepsilon)^{1-\theta}} +  \intQT  (u_i^\varepsilon)^{2+\theta} \ge C_\theta \intQT  |\nabla u_i^\varepsilon|^{\frac{4+2\theta}{3}}
\end{align*}
for $1\le i\le 3$. 
Consequently, 
\begin{align*}
\sum_{i=1}^3 \left( \intQT  (u_i^\varepsilon)^{2+\theta}+ \intQT |\nabla u_i^\varepsilon|^{\frac{4+2\theta}{3}} 
\right) \le C_{T,\theta}^\varepsilon,
\end{align*}
i.e. the desired estimate is proved.
\end{proof}

\begin{lemma}[$L^p$-estimate]\label{Lem.LpEstimate} 
Let $T\in (0,\infty)$, $T \le \Tm$. For any $1< p<\infty$,
\begin{align*}
\sum_{i=1}^3 \left( \int_\Omega (u_i^\varepsilon)^p + \intQT |\nabla u_i^\varepsilon|^2 + \intQT (u_i^\varepsilon)^{p+1} \right) \le C_{p,T}^\varepsilon.
\end{align*}
\end{lemma}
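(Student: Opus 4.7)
The plan is to drive a finite bootstrap from the $L^{2+\theta}(Q_T)$-estimate of Lemma \ref{Lem.L2+Estimate} using the feedback Lemmas \ref{Lem.Feedback12to3}--\ref{Lem.Feedback3to12}, and then close with the energy estimate of Lemma \ref{Lem.EnergyEstimate} applied at the desired exponent. The key observation is that the absorption term $(p-1)C_p^\varepsilon \sum_i \intQt (u_i^\varepsilon)^{p+1}$ in \eqref{Lem.EnergyEstimate.State1}, which previously restricted us to $p$ close to $1$, becomes a harmless known quantity once arbitrarily high $L^q(Q_T)$-integrability of the $u_i^\varepsilon$ has been established a priori.

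First, define $\theta_k := (3/2)^k \theta$ and prove by induction on $k$ the statement $u_i^\varepsilon \in L^{2+\theta_k}(Q_T)$ for $i=1,2,3$, with a bound $C_{k,T}^\varepsilon$. The base case $k=0$ is exactly Lemma \ref{Lem.L2+Estimate}. For the inductive step, I first apply Lemma \ref{Lem.Feedback12to3} with $\theta_0$ replaced by $\theta_k$ to pass the integrability of the prey species to the predator, obtaining $u_3^\varepsilon \in L^{2+\theta_k}(Q_T)$. Then Lemma \ref{Lem.Feedback3to12} with $\theta_* = \theta_k$ upgrades the prey integrability to $u_1^\varepsilon,u_2^\varepsilon \in L^{2+(3/2)\theta_k}(Q_T) = L^{2+\theta_{k+1}}(Q_T)$, after which a final application of Lemma \ref{Lem.Feedback12to3} with $\theta_0 = \theta_{k+1}$ upgrades $u_3^\varepsilon$ to the same space. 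Since $\theta_k \to \infty$, for every fixed $p>1$ there exists some finite $k=k(p)$ with $2+\theta_k \ge p+1$, so $u_i^\varepsilon \in L^{p+1}(Q_T)$ with a bound depending on $p,T,\varepsilon$.

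Now fix $p>1$ and apply the energy estimate \eqref{Lem.EnergyEstimate.State1}. The bootstrap just performed shows that the last term on the right-hand side,
\[
(p-1)C_p^\varepsilon \sum_{i=1}^3 \intQt (u_i^\varepsilon)^{p+1},
\]
is bounded by a constant $\widetilde{C}_{p,T}^\varepsilon$, independently of $t\in(0,T)$. Hence \eqref{Lem.EnergyEstimate.State1} reduces to
\[
E_p^\varepsilon(t) + (p-1)\sum_{i=1}^3 \intQt (u_i^\varepsilon)^{p-2}|\nabla u_i^\varepsilon|^2 + \sum_{i=1}^3 \intQt (u_i^\varepsilon)^{p+1} \le \widehat{C}_{p,T}^\varepsilon + C_p \int_0^t E_p^\varepsilon(s)\,ds,
\]
and Grönwall's inequality yields $\sup_{t\in(0,T)} E_p^\varepsilon(t)\le C_{p,T}^\varepsilon$ together with the bound on $\intQT (u_i^\varepsilon)^{p+1}$.

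It remains to extract the gradient estimate $\intQT|\nabla u_i^\varepsilon|^2 \le C_{p,T}^\varepsilon$. This is obtained independently by applying the argument above at $p=2$, where the gradient term on the left-hand side of \eqref{Lem.EnergyEstimate.State1} specialises to $\sum_{i=1}^3 \intQT |\nabla u_i^\varepsilon|^2$; the bootstrap already guarantees $u_i^\varepsilon\in L^3(Q_T)$, which controls the absorption term at $p=2$. The only point that requires mild care is that the constants coming from Lemma \ref{Lem.Feedback3to12} depend on $\theta_k$ and could degenerate as $\theta_k\to 2$ (through the convention for $1/(2-\theta_*)$); however, since for any prescribed $p$ we only need finitely many iterations, this causes no real obstruction, and the constants $C_{p,T}^\varepsilon$ are manifestly finite for each fixed $\varepsilon>0$, $p>1$, and $T<\infty$.
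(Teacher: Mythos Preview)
Your proof is correct and follows essentially the same route as the paper: a finite bootstrap via the feedback Lemmas \ref{Lem.Feedback12to3}--\ref{Lem.Feedback3to12} with $\theta_k=(3/2)^k\theta\to\infty$, followed by reinsertion into the energy estimate \eqref{Lem.EnergyEstimate.State1} at the desired exponent and extraction of the gradient bound at $p=2$. The only cosmetic difference is that your inductive step invokes Lemma \ref{Lem.Feedback12to3} once redundantly (the inductive hypothesis already places $u_3^\varepsilon$ in $L^{2+\theta_k}(Q_T)$), and your remark about possible degeneration near $\theta_*=2$ is unnecessary since the convention in Lemma \ref{Lem.Feedback3to12} handles that case smoothly.
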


\begin{proof}
We perform a bootstrap argument via the regularity feedback studied in Lemmas~\ref{Lem.Feedback12to3}-\ref{Lem.Feedback3to12}, which is presented as follows.

\medskip

\noindent \underline{\textit{Step 1:}} Let $\theta_1:=\theta$, where $\theta$ is defined by 
 Lemma~\ref{Lem.L2+Estimate}. Then, $(u_i^\varepsilon)_{1\le i\le 3} \in L^{2+\theta_1}( Q_{T})^3$. 
By applying Lemma~\ref{Lem.Feedback3to12}, the feedback from the predator species to the prey ones gives
\begin{align*}
 \intQT \left( (u_1^\varepsilon)^{2+\frac{3}{2}\theta_1} + (u_2^\varepsilon)^{2+\frac{3}{2}\theta_1} \right) \le C_{T,\theta_1}^\varepsilon + C_{T,\theta_1}^\varepsilon  \left( \intQT (u_3^\varepsilon)^{2+\theta_1} \right)^{\frac{2+3\theta_1}{4+2\theta_1}} \le C_{T,\theta_1}^\varepsilon.
\end{align*}
Then, by Lemma~\ref{Lem.Feedback12to3}, the feedback from prey to predator yields that 
\begin{align*}
\intQT (u_3^\varepsilon)^{2+\frac{3}{2}\theta_1} \le  C_{T,\theta_1}^\varepsilon  + C_{T,\theta_1}^\varepsilon \intQT \left( (u_1^\varepsilon)^{2+\frac{3}{2}\theta_1} + (u_2^\varepsilon)^{2+\frac{3}{2}\theta_1} \right) \le C_{T,\theta_1}^\varepsilon.
\end{align*} 
Note that the above constants $C_{T,\theta_1}^\varepsilon$ are finite for finite values of $\theta_1$. 

\medskip

\noindent \underline{\textit{Step 2:}} Due to the first step, we have $(u_i^\varepsilon)_{1\le i\le 3} \in L^{2+\theta_2}( Q_{T})^3 $ with $\theta_2=\frac{3}{2}\theta_1=\frac{3}{2}\theta$. Therefore, in the same way as Step 1, we see that 
\begin{align*}
\intQT \left( (u_1^\varepsilon)^{2+\frac{3}{2}\theta_2} + (u_2^\varepsilon)^{2+\frac{3}{2}\theta_2} +  (u_3^\varepsilon)^{2+\frac{3}{2}\theta_2} \right) \le C_{T,\theta_2}^\varepsilon. 
\end{align*}

\noindent \dots

\noindent \underline{\textit{Step $n$:}} Due to the $(n-1)$-th step, we have  $(u_i^\varepsilon)_{1\le i\le 3} \in L^{2+\theta_n}( Q_{T})^3 $ with 
$$\theta_n = \left( \frac{3}{2} \right)^{n-1} \theta .$$ 
Then, by combining Lemmas~\ref{Lem.Feedback12to3} and~\ref{Lem.Feedback3to12}, 
\begin{align*}
\intQT \left( (u_1^\varepsilon)^{2+\frac{3}{2}\theta_n} + (u_2^\varepsilon)^{2+\frac{3}{2}\theta_n} + (u_3^\varepsilon)^{2+\frac{3}{2}\theta_n} \right) \le C_{T,\theta_n}^\varepsilon. 
\end{align*} 

Since $\lim_{n \to \infty} \theta_n = \infty$, the above bootstrap argument claims that 
\begin{align*}
\intQT \left( (u_1^\varepsilon)^{p} + (u_2^\varepsilon)^{p} + (u_3^\varepsilon)^{p} \right) \le C_{T,p}^\varepsilon  
\end{align*} 
for any $1<p<\infty$, where $C_{T,p}^\varepsilon$ is finite for finite values of $p$. 
This can be incoporated into the energy estimate (cf. Lemma~\ref{Lem.EnergyEstimate})  again to obtain
\begin{align*}
\sum_{i=1}^3 \left( \int_\Omega (u_i^\varepsilon)^p(t) + \intQT (u_i^\varepsilon)^{p-2} |\nabla u_i^\varepsilon|^2 +  \intQT (u_i^\varepsilon)^{p+1} \right) \le C_{T,p}^\varepsilon
\end{align*}
for any $1<p<\infty$. In particular, we have $\nabla u_i^\varepsilon\in L^2( Q_{T})$ by letting $p=2$.
\end{proof}

\begin{remark} In the above lemma, since $C_{T,p}^\varepsilon$ may tend to $\infty$ as $p\to \infty$, it is not direct to claim the $L^\infty(Q_T)$-boundedness of solutions, which, however, will be obtained due to the smoothing effect of the heat semigroup. 
\end{remark}

We are ready to prove the first main result, Theorem~\ref{Theo.GlobalClassicalSol}.

\begin{proof}[Proof of Theorem~\ref{Theo.GlobalClassicalSol}]
Taking into account the criteria \eqref{Lem.GlobalExistenceCriteria}, we prove $\Tm=\infty$ by showing that 
\begin{align}
\label{Lemma.LpEstimate.Proof1}
\sum_{i=1}^3 \|u_i^\varepsilon \|_{L^\infty( Q_{\Tm})} \le C_{\Tm}^\varepsilon 
\end{align}
under the contradiction assumption $\Tm < \infty$.  To prove \eqref{Lemma.LpEstimate.Proof1}, we will make use of the smoothing effect of the Neumann heat semigroups $\{e^{t(d_i\Delta-I)}:0\le t\le \Tm\}$.

Let us take $q>4$ and $\frac{1}{q}<\sigma<\frac{1}{2}-\frac{1}{q}$. Then, thanks to Theorem 1.6.1 in \cite{henry2006geometric}, 
\begin{align}
\|\xi\|_{L^\infty(\Omega)}\le C \|(-d_i\Delta+I)^{\sigma}\xi\|_{L^q(\Omega)}, \quad \forall \xi \in D((-d_i\Delta+I)^{\sigma}) . 
\label{Lemma.LpEstimate.Proof2}
\end{align}
Therefore, by utilising the estimate \eqref{HeatSemigroup.Contraction} and Lemma~\ref{Lem.HeatCombineDiv}, 
\begin{equation*}
  \begin{split}
    & \left\| \int^t_{0}  e^{(t-s)(d_i\Delta-I)} \chi_i  \di (u_i^\varepsilon \nabla v_3^\varepsilon) +  e^{(t-s)(d_i\Delta-I)} (f_i(u_1^\varepsilon,u_2^\varepsilon,u_3^\varepsilon)+u_i^\varepsilon) \Big) \, ds \right\|_{\LO{\infty}} \\ 
     &\le    C \int^t_{0} \|(-d_i\Delta + I)^\sigma e^{(t-s)(-d_i\Delta+I)}  \di (u_i^\varepsilon \nabla v_3^\varepsilon)  \|_{\LO{q}} \, ds\\
     & + C \int^t_{0} \|(-d_i\Delta + I)^\sigma e^{(t-s)(-d_i\Delta+I)}  (f_i(u_1^\varepsilon,u_2^\varepsilon,u_3^\varepsilon)+u_i^\varepsilon) \|_{\LO{q}} \, ds \\
     &\le    C \int^t_{0} (t-s)^{-\sigma-\frac{1}{2}-\kappa} \left(  \| u_i^\varepsilon \nabla v_3^\varepsilon \|_{\LO{q}} + T^{\frac{1}{2}+\kappa} \| f_i(u_1^\varepsilon,u_2^\varepsilon,u_3^\varepsilon)+u_i^\varepsilon \|_{\LO{q}} \right) \, ds, \\
  \end{split}
\end{equation*}
for $i=1,2$ and any $\kappa>0$. By the heat regularisation, cf. Lemma ~\ref{Lem.HeatRegularisation}, applied to the equation for $v_3^\varepsilon$, where $u_3^\varepsilon \in L^q(Q_{\Tm})$ with $q>N+2$ ($N$ is the spatial dimension, here $N=2$),  we have the boundedness of $\nabla v_3^\varepsilon$ in $L^\infty( Q_{\Tm})$. Therefore, due to the regularity in Lemma~\ref{Lem.LpEstimate}, 
\begin{align*}
& \| u_i^\varepsilon \nabla v_3^\varepsilon \|_{\LO{q}} + \Tm^{\frac{1}{2}+\kappa} \| f_i(u_1^\varepsilon,u_2^\varepsilon,u_3^\varepsilon)+u_i^\varepsilon \|_{\LO{q}} \\
& \le  \| u_i^\varepsilon\|_{\LO{2q}} \|\nabla v_3^\varepsilon \|_{\LO{2q}}  + C_{\Tm}^\varepsilon \|(u_1^\varepsilon,u_2^\varepsilon,u_3^\varepsilon)\|_{\LO{2q}^3}^2 \\
& \le C_{\Tm}^\varepsilon \|\nabla v_3^\varepsilon \|_{\LO{2q}}^2  + C_{\Tm}^\varepsilon \|(u_1^\varepsilon,u_2^\varepsilon,u_3^\varepsilon)\|_{\LO{2q}^3}^2 \\
& \le C_{\Tm}^\varepsilon \|(u_1^\varepsilon,u_2^\varepsilon,u_3^\varepsilon)\|_{\LO{2q}^3}^2,  
\end{align*}
where we used the elliptic maximal regularity~\ref{Lem.MaximalRegularity}.  Since the initial data is smooth enough as in Assumption~\ref{Assumption.Initial1}, the  Duhamel's principle and the H\"older's inequality yield  
\begin{equation*}
  \begin{split}
   \|u_i^\varepsilon(t)\|_{L^\infty(\Omega)}   
   & \le C   \| u_{i0}\|_{\LO{\infty}} + C_T^\varepsilon  \|(u_1^\varepsilon,u_2^\varepsilon,u_3^\varepsilon)\|_{L^{2q}( Q_{\Tm})^3}^2  \, t^{\frac{1}{2}-\frac{1}{q}-\sigma-\kappa} \\
   & \le C   \| u_{i0}\|_{\LO{\infty}} + C_T^\varepsilon \|(u_1,u_2,u_3)\|_{L^{2q}( Q_{\Tm})^3}^2 \,  \Tm^{\frac{1}{2}-\frac{1}{q}-\sigma-\kappa} , 
  \end{split}
\end{equation*}
 for $0<t<\Tm$, where we chose $0<\kappa < 1/2-1/q-\sigma$. Thus, we imply $u_1^\varepsilon,u_2^\varepsilon \in L^\infty( Q_{\Tm})$. We can obtain  
$u_3^\varepsilon \in L^\infty( Q_{\Tm})$ in the same way. Summarily, \eqref{Lemma.LpEstimate.Proof1} is proved, and the conclusion of this theorem is followed due to proof of contradiction.
\end{proof}

\section{Rigorous analysis for fast signal diffusion limit}\label{sec:4}

This section is devoted to studying rigorously the fast signal diffusion limit, where the $\varepsilon$-depending system \eqref{System.UV}-\eqref{Condition.Boun-Init.Epsilon} is reduced to \eqref{System.UV.Limit}-\eqref{Condition.Boun-Init.Limit}. Here, we follow the framework in the Section~\ref{sec-strong-solu}. The uniformly improved regularity will be presented in Subsection~\ref{Sec:UniImprRegu}, the feedback argument in Subsection~\ref{Sec:FSL:Feedback}, the uniform boundedness in $L^\infty(Q_T)$ in Subsection~\ref{Sec:UniBoundLInf}, and finally,  both the weak-to-strong convergence and passing to the limit in Subsection~\ref{Sec:Weak2StrongPassing}.

\subsection{Uniformly improved regularity}
\label{Sec:UniImprRegu}

By integrating the equations for $u_i^\varepsilon$, $1\le i\le 3$, we directly obtain the following estimate
\begin{align*}
\sup_{\varepsilon>0}  \left( \sup_{0\le t\le T} \int_\Omega u_i^\varepsilon(t) + \intQT (u_i^\varepsilon)^{2}   \right) \le C_T.
\end{align*}
However, in Lemma~\ref{Lem.uEps.LInftyL1+} we observe that this can be improved up to $L^\infty(0,T;L^{1+}(\Omega)) \cap L^{2+}(Q_T)$ by considering the energy similarly as \eqref{Energy.FunctionDef}, of course, and taking care of the $\varepsilon$-dependence of the solution,  which also gives a useful gradient estimate. For the proof of Lemma~\ref{Lem.uEps.LInftyL1+}, we will employ estimates in Lemma~\ref{Lem.vEps.L2H2}. 

\begin{lemma} \label{Lem.vEps.L2H2} For $i=1,2,3$, 
\begin{align*}
\sup_{\varepsilon>0} \left( \intQT \big( (v_i^\varepsilon)^2 + |\nabla v_i^\varepsilon|^2 + |\Delta v_i^\varepsilon|^2 \big)  \right) \le C_T. 
\end{align*}
\end{lemma}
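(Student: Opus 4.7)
The plan is to exploit the uniform bound $\sup_{\varepsilon>0}\bigl(\sup_{0\le t\le T}\int_\Omega u_i^\varepsilon + \intQT (u_i^\varepsilon)^2\bigr)\le C_T$ stated at the beginning of Section 3, and transfer it to $v_i^\varepsilon$ through standard energy testing applied to the chemical equations. The estimates for $i=1,2$ (elliptic equations) are essentially decoupled from $\varepsilon$, while the estimate for $i=3$ requires carefully handling the slow-evolution term $\varepsilon\partial_t v_3^\varepsilon$.

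For $i=1,2$, I would test the equation $-\lambda_i\Delta v_i^\varepsilon + \mu_i v_i^\varepsilon=\zeta_i u_i^\varepsilon$ against $v_i^\varepsilon$, apply Young's inequality on $\zeta_i\int_\Omega u_i^\varepsilon v_i^\varepsilon$ to absorb a fraction of $\mu_i\int_\Omega(v_i^\varepsilon)^2$, obtaining a pointwise-in-time bound
\[
\int_\Omega\bigl((v_i^\varepsilon)^2 + |\nabla v_i^\varepsilon|^2\bigr)\le C\int_\Omega (u_i^\varepsilon)^2,
\]
and then integrate in time. The bound on $\Delta v_i^\varepsilon$ follows immediately by isolating $\Delta v_i^\varepsilon$ algebraically from the equation, $\lambda_i\Delta v_i^\varepsilon=\mu_i v_i^\varepsilon-\zeta_i u_i^\varepsilon$, and squaring and integrating. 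Alternatively, the elliptic maximal regularity (Lemma \ref{Lem.MaximalRegularity}) delivers the $L^2(0,T;H^2)$ estimate in one step.

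For $i=3$ I would carry out two energy identities on the parabolic equation $\varepsilon\partial_t v_3^\varepsilon-\lambda_3\Delta v_3^\varepsilon+\mu_3 v_3^\varepsilon=\zeta_3 u_3^\varepsilon$. First, testing against $v_3^\varepsilon$ yields
\[
\frac{\varepsilon}{2}\frac{d}{dt}\int_\Omega(v_3^\varepsilon)^2+\lambda_3\int_\Omega|\nabla v_3^\varepsilon|^2+\frac{\mu_3}{2}\int_\Omega(v_3^\varepsilon)^2\le C\int_\Omega (u_3^\varepsilon)^2,
\]
which upon time integration, dropping the nonnegative term $\frac{\varepsilon}{2}\int_\Omega(v_3^\varepsilon(T))^2$ on the left and using $\frac{\varepsilon}{2}\int_\Omega v_{30}^2\le C$ for $\varepsilon\le 1$, gives the desired $L^2(Q_T)$ bounds on $v_3^\varepsilon$ and $\nabla v_3^\varepsilon$. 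Second, testing against $-\Delta v_3^\varepsilon$ and integrating by parts yields
\[
\frac{\varepsilon}{2}\frac{d}{dt}\int_\Omega|\nabla v_3^\varepsilon|^2+\lambda_3\int_\Omega|\Delta v_3^\varepsilon|^2+\mu_3\int_\Omega|\nabla v_3^\varepsilon|^2=-\zeta_3\int_\Omega u_3^\varepsilon\Delta v_3^\varepsilon,
\]
and Young's inequality absorbs half of the $|\Delta v_3^\varepsilon|^2$ term into the left-hand side. Time integration, combined with the fact that $\frac{\varepsilon}{2}\int_\Omega|\nabla v_{30}|^2$ is bounded uniformly in $\varepsilon\le 1$ (here Assumption \ref{Assumption.Initial1} is used, which gives $v_{30}\in C^2(\overline\Omega)$), closes the estimate.

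The main subtlety, and the only place where one must pay attention to $\varepsilon$, is the $i=3$ case: one must ensure that the boundary term $\tfrac{\varepsilon}{2}\int_\Omega|\nabla v_{30}|^2$ produced by the time derivative does not blow up, which is exactly why the regularity $v_{30}\in C_+^2(\overline\Omega)$ from Assumption \ref{Assumption.Initial1} is needed. Apart from this, the argument is a textbook energy estimate and no feedback or bootstrap is required at this stage.
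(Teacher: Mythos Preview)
Your proposal is correct and follows essentially the same approach as the paper, which simply says to multiply the equation for $v_i^\varepsilon$ by $v_i^\varepsilon-\Delta v_i^\varepsilon$ and use the uniform $L^2(Q_T)$ bound on $u_i^\varepsilon$. Your two separate tests (against $v_i^\varepsilon$ and against $-\Delta v_i^\varepsilon$) are exactly the two pieces of that combined test function, so the arguments coincide; your version is merely more explicit about the $i=3$ boundary terms and the role of Assumption~\ref{Assumption.Initial1}.
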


\begin{proof} The lemma is proved by multiplying the two sides of the equation for $v_i^\varepsilon$ by $v_i^\varepsilon - \Delta v_i^\varepsilon$ and taking into account that $u_i$ is bounded in $L^2(Q_T)$ uniformly.
\end{proof}

\begin{lemma} \label{Lem.uEps.LInftyL1+} There exists $\delta>0$ such that 
\begin{align*}
\sup_{\varepsilon>0} \left(  \sup_{0\le t\le T} \int_\Omega (u_i^\varepsilon(t))^{1+\delta} + \intQT (u_i^\varepsilon)^{2+\delta}  \right)   \le C_T
\end{align*}
and 
\begin{align*}
\sup_{\varepsilon>0} \left(  \intQT \big|\nabla (u_i^\varepsilon)^{\frac{1+\delta}{2}} \big|^2 + \intQT \big|\nabla u_i^\varepsilon\big|^{\frac{4+2\delta}{3}} \right)   \le C_T
\end{align*}
for $i=1,2,3$.
\end{lemma}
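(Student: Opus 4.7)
The plan is to redo the $L^p$-energy argument of Lemma~\ref{Lem.EnergyEstimate} with $p = 1 + \delta$, ensuring every constant is $\varepsilon$-independent by replacing the parabolic maximal regularity on the $v_3^\varepsilon$-equation (whose constant blows up as $\varepsilon \to 0$) with the uniform $L^2$-bound on $\Delta v_3^\varepsilon$ from Lemma~\ref{Lem.vEps.L2H2}, and then to close the resulting inequality via the 2D Gagliardo--Nirenberg bound
\[
\intQT (u_i^\varepsilon)^{2p} \le C_p \Bigl( \sup_{0 \le t \le T} \intO (u_i^\varepsilon(t))^p \Bigr) \intQT \bigl|\nabla (u_i^\varepsilon)^{p/2}\bigr|^2.
\]

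I would test the $u_i^\varepsilon$-equation against $p(u_i^\varepsilon)^{p-1}$, producing on the left the dissipation $\tfrac{4(p-1)}{p} d_i \intO |\nabla (u_i^\varepsilon)^{p/2}|^2$, on the right a reaction contribution bounded by $C_p \intO (u_i^\varepsilon)^p - c\intO (u_i^\varepsilon)^{p+1}$ as in Lemma~\ref{Lem.EnergyEstimate}, and chemotaxis cross-terms of the form $\pm(p-1)\chi \intO (u_i^\varepsilon)^p \Delta v_j^\varepsilon$. For $j=1,2$ the equation for $v_j^\varepsilon$ is $\varepsilon$-free, so elliptic maximal regularity (Lemma~\ref{Lem.MaximalRegularity}) gives $\|\Delta v_j^\varepsilon\|_{L^{p+1}(Q_T)} \le C \|u_j^\varepsilon\|_{L^{p+1}(Q_T)}$ with a constant independent of $\varepsilon$ (and, for $p$ near $1$, of $p$); Young's inequality with exponents $\tfrac{p+1}{p}$ and $p+1$ then absorbs these terms into the $\intQT (u_j^\varepsilon)^{p+1}$ coming from the reaction. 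The main obstacle is the term containing $\Delta v_3^\varepsilon$: rather than invoking parabolic maximal regularity, I would apply Young's inequality with equal exponents,
\[
(p-1)\chi_i \intO (u_i^\varepsilon)^p |\Delta v_3^\varepsilon| \le \tfrac{p-1}{2} \intO (u_i^\varepsilon)^{2p} + \tfrac{(p-1)\chi_i^2}{2} \intO |\Delta v_3^\varepsilon|^2,
\]
so that Lemma~\ref{Lem.vEps.L2H2} controls the second summand uniformly in $\varepsilon$ after time-integration, while the first is handled by the Gagliardo--Nirenberg bound above.

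Time-integrating and grouping terms (and absorbing the Gagliardo--Nirenberg product $(\sup_t \intO (u_i^\varepsilon)^p) \intQT |\nabla(u_i^\varepsilon)^{p/2}|^2$ against the dissipation via a further Young split, valid for $\delta$ small), I expect an inequality
\[
\sup_t \intO (u_i^\varepsilon(t))^p + (p-1) \intQT \bigl|\nabla (u_i^\varepsilon)^{p/2}\bigr|^2 + \intQT (u_i^\varepsilon)^{p+1} \le C_{p,T} + C_{p,T}(p-1) \sup_t \intO (u_i^\varepsilon(t))^p
\]
with all $C_{p,T}$ independent of $\varepsilon$ and $(p-1) C_{p,T} \to 0$ as $p \searrow 1$ (the elliptic maximal regularity constant is $p$-uniform near $1$, and the Gagliardo--Nirenberg constant is continuous in $p$). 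Choosing $\delta := p-1$ small enough that $(p-1) C_{p,T} < \tfrac{1}{2}$ absorbs the last term on the right into the left and delivers the uniform bounds on $\sup_t \intO (u_i^\varepsilon)^{1+\delta}$, $\intQT (u_i^\varepsilon)^{2+\delta}$, and $\intQT |\nabla (u_i^\varepsilon)^{(1+\delta)/2}|^2$. The estimate on $\intQT |\nabla u_i^\varepsilon|^{(4+2\delta)/3}$ then follows as in the proof of Lemma~\ref{Lem.L2+Estimate} by Young's inequality applied to the decomposition $|\nabla u_i^\varepsilon|^{(4+2\delta)/3} = \bigl((u_i^\varepsilon)^{(\delta-1)/2}|\nabla u_i^\varepsilon|\bigr)^{(4+2\delta)/3} (u_i^\varepsilon)^{(1-\delta)(2+\delta)/3}$, together with the bounds just established.
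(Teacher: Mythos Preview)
Your strategy is exactly the paper's: avoid the $\varepsilon$-dependent parabolic maximal regularity by invoking the uniform $L^2(Q_T)$-bound on $\Delta v_3^\varepsilon$ from Lemma~\ref{Lem.vEps.L2H2}, and close via the 2D Gagliardo--Nirenberg inequality for $p$ close to $1$. There is, however, a gap in how you handle the cross-term $(p-1)\chi_i\intQT (u_i^\varepsilon)^p|\Delta v_3^\varepsilon|$. Applying Young with equal exponents \emph{to the integrand} yields $\tfrac{p-1}{2}\intQT (u_i^\varepsilon)^{2p}$ on the right; after Gagliardo--Nirenberg this becomes $\tfrac{(p-1)C_p}{2}\,A\cdot B$ with $A=\sup_t\intO(u_i^\varepsilon)^p$ and $B=\intQT|\nabla(u_i^\varepsilon)^{p/2}|^2$. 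No ``further Young split'' turns the \emph{product} $AB$ into a linear combination $\epsilon B+C_\epsilon A$ (Young on $AB$ produces $A^2+B^2$, which is worse), so the inequality $A+(p-1)cB\le C+(p-1)C'AB$ does not close by absorption, regardless of how small $\delta=p-1$ is.

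The paper repairs this by applying Cauchy--Schwarz \emph{to the integral} first,
\[
(p-1)\chi_i\intQT(u_i^\varepsilon)^p|\Delta v_3^\varepsilon|\le(p-1)\chi_i\,\|\Delta v_3^\varepsilon\|_{L^2(Q_T)}\Bigl(\intQT(u_i^\varepsilon)^{2p}\Bigr)^{1/2},
\]
so that after Gagliardo--Nirenberg one obtains $(p-1)C_T\sqrt{AB}$ rather than $(p-1)C_T\,AB$. Young then cleanly splits $\sqrt{AB}=A^{1/2}B^{1/2}$ into $\tfrac{d_i(p-1)}{2}B+ (p-1)C''A$, the $B$-part is absorbed into the dissipation, and the $A$-part carries the factor $(p-1)$ needed to absorb into the left for $\delta$ small---which is precisely the target inequality you wrote down. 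The remainder of your outline (the $v_1^\varepsilon,v_2^\varepsilon$ terms via elliptic maximal regularity with $p$-independent constant, and the final Young step for $|\nabla u_i^\varepsilon|^{(4+2\delta)/3}$) is correct and matches the paper.
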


\begin{proof} We estimate only the components $u_i^\varepsilon$ for $i=1,2$, and note that the component $u_3^\varepsilon$ can be estimated in the same way. Put $r(u_i^\varepsilon):= (u_i^\varepsilon)^{p/2}$. Then, by using the Gagliardo–Nirenberg interpolation inequality,  
\begin{equation} 
\begin{aligned}
\int_\Omega (u_i^\varepsilon)^{2p} & = \|r(u_i^\varepsilon)\|_{L^{4}(\Omega)}^4 \le C\|\nabla r(u_i^\varepsilon)\|_{L^2(\Omega)}^2 \|r(u_i^\varepsilon)\|_{L^2(\Omega)}^2 \\
& \le C \left( \sup_{0\le t\le T} \|r(u_i^\varepsilon(t))\|_{L^2(\Omega)}^2 \right) \int_\Omega |\nabla r(u_i^\varepsilon)|^2 \\
& = \frac{Cp^2}{4} \left( \sup_{0\le t\le T} \int_\Omega (u_i^\varepsilon(t))^p \right)  \int_\Omega  (u_i^\varepsilon)^{p-2} |\nabla u_i^\varepsilon|^2. 
\end{aligned}
\label{Lem.uEps.LInftyL1+.Proof1a} 
\end{equation}
We imply that 
\begin{align}
\intQT (u_i^\varepsilon)^{2p} 
& \le \frac{Cp^2}{4} \left( \sup_{0\le t\le T} \int_\Omega (u_i^\varepsilon(t))^p \right)  \intQT (u_i^\varepsilon)^{p-2} |\nabla u_i^\varepsilon|^2.   
\label{Lem.uEps.LInftyL1+.Proof1}
\end{align}
Due to a similar computation as \eqref{Lem.Feedback3to12.Proof1}, we have  
\begin{equation}
\begin{aligned} 
& \sup_{0\le t\le T} \int_\Omega (u_i^\varepsilon(t))^p  +   d_ip(p-1)  \intQT (u_i^\varepsilon)^{p-2} |\nabla u_i^\varepsilon|^2 +   p \alpha_i \intQT (u_i^\varepsilon)^{p+1}  \\
& \le  \int_\Omega u_{i0}^p + p \alpha_i \intQT (u_i^\varepsilon)^p + (p-1)\chi_i \intQT (u_i^\varepsilon)^{p} \Delta v_3^\varepsilon \\
& \le \int_\Omega u_{i0}^p + p \alpha_i \intQT (u_i^\varepsilon)^p + (p-1) \chi_i \|\Delta v_3^\varepsilon\|_{L^2(Q_T)} \left( \intQT (u_i^\varepsilon)^{2p} \right)^{1/2}. 
\end{aligned}
\label{Lem.uEps.LInftyL1+.Proof2}
\end{equation}
Therefore, a combination of \eqref{Lem.uEps.LInftyL1+.Proof1} and \eqref{Lem.uEps.LInftyL1+.Proof2} gives
\begin{equation}
\begin{aligned} 
& \sup_{0\le t\le T} \int_\Omega (u_i^\varepsilon(t))^p  +   d_ip(p-1)  \intQT (u_i^\varepsilon)^{p-2} |\nabla u_i^\varepsilon|^2   + p \alpha_i \intQT (u_i^\varepsilon)^{p+1}  \\
& \le \int_\Omega u_{i0}^p + \frac{p \alpha_i}{2} \intQT (u_i^\varepsilon)^{p+1} + C_{p,\alpha_i,T} \\
& + (p-1) \frac{C_T \sqrt{C} p \chi_i}{2}    \left( \sup_{0\le t\le T} \int_\Omega (u_i^\varepsilon(t))^p \right)^{1/2} \left( \intQT (u_i^\varepsilon)^{p-2} |\nabla u_i^\varepsilon|^2 \right)^{1/2} \\
& \le \int_\Omega u_{i0}^p + \frac{p \alpha_i}{2} \intQT (u_i^\varepsilon)^{p+1} + C_{p,\alpha_i,T} \\
& + (p-1) \frac{C_T^2C p \chi_i^2}{8d_i}     \left( \sup_{0\le t\le T} \int_\Omega (u_i^\varepsilon(t))^p \right) + \frac{d_ip(p-1)}{2}  \intQT (u_i^\varepsilon)^{p-2} |\nabla u_i^\varepsilon|^2 , 
\end{aligned}
\label{Lem.uEps.LInftyL1+.Proof3}
\end{equation}
where, by Lemma~\ref{Lem.vEps.L2H2},   
$$\|\Delta v_3^\varepsilon(t)\|_{L^2(Q_T)} \le C_T, $$
and by the Young inequality
\begin{align*}
(u_i^\varepsilon)^p \le \frac{1}{2} (u_i^\varepsilon)^{p+1} + C_{p}. 
\end{align*}
It follows from \eqref{Lem.uEps.LInftyL1+.Proof3}
 that 
 \begin{equation}
\begin{aligned} 
& \left( \sup_{0\le t\le T} \int_\Omega (u_i^\varepsilon(t))^p \right)  +  \frac{d_ip(p-1)}{2} \intQT \big| \nabla u_i^\varepsilon \big|^2   +  \frac{p\alpha_i}{2} \intQT (u_i^\varepsilon)^{p+1}  \\
& \le C_{p,T} + C_{p,T} \int_\Omega u_{i0}^p   +  (p-1)  \frac{C_T^2C p \chi_i^2}{8d_i}    \left( \sup_{0\le t\le T} \int_\Omega (u_i^\varepsilon(t))^p \right) . 
\end{aligned}
\label{Lem.uEps.LInftyL1+.Proof4}
\end{equation}
One can observe the limit 
\begin{align*}
\lim_{p\searrow 1} (p-1)  \frac{C_T^2C p \chi_i^2}{8d_i}  = 0,
\end{align*}
which allows us to choose $p=1+\delta$, with an enough small constant $\delta>0$,  such that $(p-1)C_{p,T}<1$ to get   
\begin{align*}
\sup_{\varepsilon>0} \left(  \sup_{0\le t\le T} \int_\Omega (u_i^\varepsilon(t))^{1+\delta} + \intQT (u_i^\varepsilon)^{2+\delta}  +   \intQT \big|\nabla \big((u_i^\varepsilon)^{\frac{1+\delta}{2}}\big)\big|^2 \right)   \le C_T . 
\end{align*} 
Finally, by the Young inequality again, we have 
\begin{align*}
\big|\nabla u_i^\varepsilon\big|^{\frac{4+2\delta}{3}} \le C \left(  |u_i^\varepsilon|^{2+\delta} +  \big|\nabla \big((u_i^\varepsilon)^{\frac{1+\delta}{2}}\big)\big|^2 \right),  
\end{align*}
and directly obtain a uniform bound for $|\nabla u_i^\varepsilon|$ in $L^{(4+2\delta)/3}(Q_T)$. 
\end{proof}

\begin{lemma} \label{Lem.vEps.LInftyLp} Let $\delta>0$ be obtained by Lemma~\ref{Lem.uEps.LInftyL1+}. Then,   
\begin{align*}
\sup_{\varepsilon>0} \left(  \sup_{0\le t\le T} \int_\Omega (v_i^\varepsilon(t))^{1+\delta} + \sup_{0\le t\le T} \int_\Omega (v_3^\varepsilon(t))^{p}  \right)   \le C_T,
\end{align*}
for $1\le p<\infty$ and $i=1,2$.  
\end{lemma}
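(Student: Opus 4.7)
For the components $v_i^\varepsilon$ with $i=1,2$, the plan is direct. Since the equation $-\lambda_i\Delta v_i^\varepsilon + \mu_i v_i^\varepsilon = \zeta_i u_i^\varepsilon$ is genuinely elliptic with no $\varepsilon$-dependence in its coefficients, I would test it pointwise in $t$ against the non-negative function $(v_i^\varepsilon)^\delta$. Integration by parts produces a non-negative gradient term, and Young's inequality $\zeta_i u_i^\varepsilon(v_i^\varepsilon)^\delta \le \eta (v_i^\varepsilon)^{1+\delta} + C_\eta (u_i^\varepsilon)^{1+\delta}$ with $\eta$ small enough to absorb into the zeroth-order term yields the pointwise-in-time estimate $\int_\Omega (v_i^\varepsilon(t))^{1+\delta} \le C\int_\Omega (u_i^\varepsilon(t))^{1+\delta}$. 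Taking the supremum in $t$ and invoking Lemma \ref{Lem.uEps.LInftyL1+} closes the bound uniformly in $\varepsilon$.

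For $v_3^\varepsilon$ the situation is more delicate because the slow evolution $\varepsilon\partial_t v_3^\varepsilon$ obstructs a direct $L^p$-energy test: multiplying by $(v_3^\varepsilon)^{p-1}$ produces a right-hand side that would require a uniform bound on $\|u_3^\varepsilon\|_{L^p}$, which is not available for $p$ beyond $1+\delta$. My plan is instead to rewrite the equation as $\partial_t v_3^\varepsilon = (\lambda_3/\varepsilon)\Delta v_3^\varepsilon - (\mu_3/\varepsilon)v_3^\varepsilon + (\zeta_3/\varepsilon) u_3^\varepsilon$ and use Duhamel's formula with the rescaled Neumann heat semigroup,
\[
v_3^\varepsilon(t) = e^{(t/\varepsilon)(\lambda_3\Delta - \mu_3 I)}v_{30} + \frac{\zeta_3}{\varepsilon}\int_0^t e^{((t-s)/\varepsilon)(\lambda_3\Delta - \mu_3 I)}u_3^\varepsilon(s)\,ds.
\]
The contraction property controls the initial-data term by $\|v_{30}\|_{L^\infty(\Omega)}$, available from Assumption \ref{Assumption.Initial1}. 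For the Duhamel term, I would apply the standard $L^{1+\delta}\to L^p$ smoothing estimate
\[
\|e^{\tau(\lambda_3\Delta - \mu_3 I)}f\|_{L^p(\Omega)} \le C\,\tau^{-\frac{1}{1+\delta}+\frac{1}{p}}\,e^{-\mu_3\tau}\,\|f\|_{L^{1+\delta}(\Omega)}, \quad p\ge 1+\delta,
\]
valid in dimension $N=2$, together with the uniform bound $\|u_3^\varepsilon(s)\|_{L^{1+\delta}(\Omega)}\le C_T$ from Lemma \ref{Lem.uEps.LInftyL1+}. The change of variable $\sigma=(t-s)/\varepsilon$ then transforms the $\varepsilon^{-1}$-weighted integral into the $\varepsilon$-free expression
\[
C\int_0^{t/\varepsilon}\sigma^{-\frac{1}{1+\delta}+\frac{1}{p}}e^{-\mu_3\sigma}\,d\sigma \le C\int_0^\infty\sigma^{-\frac{1}{1+\delta}+\frac{1}{p}}e^{-\mu_3\sigma}\,d\sigma,
\]
a finite Gamma-type integral since the exponent $-\tfrac{1}{1+\delta}+\tfrac{1}{p}$ is strictly greater than $-1$. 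This delivers the uniform bound for any $p\ge 1+\delta$; the remaining range $1\le p<1+\delta$ follows from Hölder's inequality on the bounded domain.

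The main obstacle is the singular factor $1/\varepsilon$ in front of the Duhamel integral, and the resolution hinges on two features working together: the gain of one power of $\varepsilon$ from the change of variables, and the exponential decay $e^{-\mu_3\tau}$ coming from the reaction term $\mu_3 v_3^\varepsilon$. Without the reaction (i.e.\ $\mu_3=0$), only a mean-value/spectral-gap argument on the zero-average projection would work. The two-dimensional setting is also silently essential: in dimension $N$ the smoothing singularity is $\tau^{-\frac{N}{2}(\frac{1}{1+\delta}-\frac{1}{p})}$, and its integrability near $\tau=0$ uniformly in $p<\infty$ requires $\frac{N}{2(1+\delta)}<1$, which holds for free when $N=2$ but would force a preliminary bootstrap in higher dimensions.
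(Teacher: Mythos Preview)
Your proof is correct. For $v_1^\varepsilon, v_2^\varepsilon$ you do exactly what the paper does: test against $(v_i^\varepsilon)^\delta$, discard the gradient term, and absorb via Young's inequality.

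For $v_3^\varepsilon$ your route is genuinely different. The paper stays with the energy test you dismiss: it multiplies by $(v_3^\varepsilon)^{p-1}$ and handles the right-hand side $\int_\Omega u_3^\varepsilon (v_3^\varepsilon)^{p-1}$ not by a crude $L^p$--$L^{p'}$ pairing (which would indeed require $u_3^\varepsilon\in L^p$), but by H\"older with $L^{1+\delta_1}$ and then Gagliardo--Nirenberg on $r=(v_3^\varepsilon)^{p/2}$ to interpolate $\|r\|_{L^{2(1+\delta_1)(p-1)/(p\delta_1)}}$ between $\|\nabla r\|_{L^2}$ and $\|r\|_{L^2}$. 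The gradient portion is absorbed into the dissipation term $\frac{4\lambda_3(p-1)}{p}\int_\Omega|\nabla r|^2$, leaving a sublinear power of $\int_\Omega(v_3^\varepsilon)^p$ that is absorbed into the $\mu_3$-term; a final Gr\"onwall-type step closes the estimate. So your remark that the energy test ``would require $\|u_3^\varepsilon\|_{L^p}$'' is not accurate, though it does not affect the validity of your own argument. Your Duhamel approach is cleaner and more elementary: it avoids the interpolation machinery and the careful exponent bookkeeping, and it makes transparent why $\mu_3>0$ and $N=2$ are exactly what is needed (integrability of $\sigma^{-1/(1+\delta)+1/p}e^{-\mu_3\sigma}$ on $(0,\infty)$). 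The paper's energy method, on the other hand, produces as a by-product the dissipative term $\int_\Omega|\nabla(v_3^\varepsilon)^{p/2}|^2$, which could be useful in other contexts, and is closer in spirit to the other energy arguments used throughout the paper. A minor technical point: the smoothing estimate you quote should strictly be written with a factor $(1+\tau^{-\alpha})$ rather than $\tau^{-\alpha}$ alone, but the exponential weight makes this harmless for the final integral.
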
 

\begin{proof} We first estimate $v_i^\varepsilon$ for $i=1,2$. Multiplying the equations for $v_i^\varepsilon$ by $(v_i^\varepsilon)^{\delta}$ gives 
\begin{align*}
 \mu_i \int_\Omega (v_i^\varepsilon)^{1+\delta}  & = - \lambda_i \delta \int_\Omega (v_i^\varepsilon)^{\delta-1} |\nabla v_i^\varepsilon|^2 + \int_\Omega u_i^\varepsilon (v_i^\varepsilon)^{\delta} \\
& \le \frac{1}{1+\delta} \left( \frac{2\delta}{\mu_i(1+\delta)} \right)^\delta \int_\Omega (u_i^\varepsilon)^{1+\delta} + \frac{\mu_i}{2} \int_\Omega (v_i^\varepsilon)^{1+\delta}.
\end{align*}
Therefore, by Lemma~\ref{Lem.uEps.LInftyL1+}, 
\begin{align*}
\sup_{0\le t\le T} \int_\Omega (v_i^\varepsilon)^{1+\delta} \le \frac{1}{\delta}  \left( \frac{2\delta}{\mu_i(1+\delta)} \right)^{1+\delta} \sup_{0\le t\le T} \int_\Omega (u_i^\varepsilon)^{1+\delta} \le C_T .
\end{align*}
Now, we proceed to estimate $v_3^\varepsilon$. The case $p=1$ is straightforward since $u_3^\varepsilon$ is uniformly bounded in $L^\infty(0,T;L^1(\Omega))$. Hence, it is only necessary to consider $p>1$.  It follows from the equation for $v_3^\varepsilon$ that 
\begin{align}
\varepsilon \frac{d}{dt} \int_\Omega (v_3^\varepsilon)^p + \frac{4\lambda_3(p-1)}{p} \int_\Omega |\nabla r(v_3^\varepsilon) |^2 + \mu_3 p \int_\Omega (v_3^\varepsilon)^p = p\int_\Omega u_3^\varepsilon (v_3^\varepsilon)^{p-1} ,
\label{Lem.vEps.LInftyLp.Proof1} 
\end{align}
where $r(v_3^\varepsilon):= (v_3^\varepsilon)^{p/2}$. 
To establish an $L^\infty(0,T;L^p(\Omega))$-estimate for $v_3^\varepsilon$, we will estimate the right-hand side of \eqref{Lem.vEps.LInftyLp.Proof1} first. 
 Recalling that $u_3^\varepsilon$ is bounded uniformly in $L^\infty(0,T;L^{1+\delta}(\Omega))$. Therefore, we can choose $\delta_1$ small enough such that  
\begin{align}\label{eq:delta1}
0<\delta_1 < \min\{ \delta; p-1 \} . 
\end{align}
By the Gagliardo–Nirenberg interpolation inequality, 
\begin{align*}
\|r(v_3^\varepsilon)\|_{L^{\frac{2(1+\delta_1)(p-1)}{p\delta_1}}(\Omega)}^{\frac{2(p-1)}{p}}  \le C\|\nabla r(v_3^\varepsilon)\|_{L^2(\Omega)}^{\frac{2}{1+\delta_1}-\frac{2}{p}}  \|r(v_3^\varepsilon)\|_{L^2(\Omega)}^{\frac{2\delta_1}{1+\delta_1}} . 
\end{align*}
Then, by the H\"older inequality, 
\begin{align*}
\int_\Omega u_3^\varepsilon (v_3^\varepsilon)^{p-1} & \le \|u_3^\varepsilon\|_{L^{1+\delta_1}(\Omega)} \|(v_3^\varepsilon)^{p-1}\|_{L^{\frac{1+\delta_1}{\delta_1}}(\Omega)} \\
& \le  \|u_3^\varepsilon\|_{L^\infty(0,T;L^{1+\delta_1}(\Omega))}  \|r(v_3^\varepsilon)\|_{L^{\frac{2(1+\delta_1)(p-1)}{\delta_1p}}(\Omega)}^{\frac{2(p-1)}{p}} \\
& \le C \|u_3^\varepsilon\|_{L^\infty(0,T;L^{1+\delta_1}(\Omega))}   \|\nabla r(v_3^\varepsilon)\|_{L^2(\Omega)}^{\frac{2}{1+\delta_1}-\frac{2}{p}}  \|r(v_3^\varepsilon)\|_{L^2(\Omega)}^{\frac{2\delta_1}{1+\delta_1}} \\
& = C \|u_3^\varepsilon\|_{L^\infty(0,T;L^{1+\delta_1}(\Omega))}   \|\nabla r(v_3^\varepsilon)\|_{L^2(\Omega)}^{\frac{2}{1+\delta_1}-\frac{2}{p}}  \|v_3^\varepsilon\|_{L^p(\Omega)}^{\frac{p\delta_1}{1+\delta_1}}. 
\end{align*} 
Since, by Lemma~\ref{Lem.uEps.LInftyL1+}, 
\begin{align*}
\|u_3^\varepsilon\|_{L^\infty(0,T;L^{1+\delta_1}(\Omega))}   \le C_T,
\end{align*}
we can use the Young inequality as follows  
\begin{equation}
\begin{aligned}
\int_\Omega u_3^\varepsilon (v_3^\varepsilon)^{p-1} & \le C_T \|\nabla r(v_3^\varepsilon)\|_{L^2(\Omega)}^{\frac{2}{1+\delta_1}-\frac{2}{p}}  \|v_3^\varepsilon\|_{L^p(\Omega)}^{\frac{p\delta_1}{1+\delta_1}} \\
& \le \frac{2\lambda_3(p-1)}{p} \int_\Omega |\nabla r(v_3^\varepsilon)|^{2}   + C_{p,\delta_1,\lambda_3,T} \|v_3^\varepsilon\|_{L^p(\Omega)}^{\frac{p^2\delta_1}{p\delta_1+1+\delta_1}}  \\
& = \frac{2\lambda_3(p-1)}{p} \int_\Omega |\nabla r(v_3^\varepsilon)|^{2}   + C_{p,\delta_1,\lambda_3,T} \left( \int_\Omega (v_3^\varepsilon)^p \right)^{\frac{p\delta_1}{p\delta_1+1+\delta_1}}. 
\end{aligned}
\label{Lem.vEps.LInftyLp.Proof2} 
\end{equation}
Here, we note $p>1+\delta_1$ by the definition of $\delta_1$ in \eqref{eq:delta1}.  
By the Young inequality again, 
\begin{equation}
\begin{aligned}
C_{p,\delta_1,\lambda_3,T} \left( \intQT (v_3^\varepsilon)^p \right)^{\frac{p\delta_1}{p\delta_1+1+\delta_1}} \le \frac{\mu_3p}{2} \intQT (v_3^\varepsilon)^p + C_{p,T} 
\end{aligned}
\label{Lem.vEps.LInftyLp.Proof3} 
\end{equation}
in which $C_{p,T}$ also depends on $\delta_1,\lambda_3$. 
Let us combine the estimates \eqref{Lem.vEps.LInftyLp.Proof1}-\eqref{Lem.vEps.LInftyLp.Proof3}. This gives 
\begin{align*}
\varepsilon \frac{d}{dt} \int_\Omega (v_3^\varepsilon)^p   + \frac{\mu_3p}{2} \int_\Omega (v_3^\varepsilon)^p \le C_{T,p,\delta_1,\lambda_3}, 
\end{align*}
which, after applying the Gr\"onwall inequality, yields that the component $(v_3^\varepsilon)^p$ is uniformly bounded in $L^\infty(0,T;L^p(\Omega))$.  
\end{proof}

\subsection{Feedback argument via maximal regularity with slow evolution}  
\label{Sec:FSL:Feedback}

We will establish feedback arguments between species, where the feedback from the prey species to the predator one is obtained similarly to Lemma~\ref{Lem.Feedback12to3}. However, the feedback in the opposite direction, i.e. the feedback from the predator species to the prey one, is not clear, where we need the maximal regularity for the parabolic equation with slow evolution. The difficulty here is the vanishing of the parabolicity as $\varepsilon \to 0$. However, this can be overcome based on the idea in \cite[Lemma 2.5]{tang2023rigorous}.

\begin{lemma}[Maximal regularity with slow evolution]
\label{Lem.MRSlowEvolution} 
 Let $\varepsilon>0$, $\lambda>0$, $\mu>0$, and $w^\varepsilon$ be the solution to the problem
\begin{align}\label{System.SmallEvolution}
\left\{ \begin{array}{rlllll}
\varepsilon \partial_t w^\varepsilon - \lambda \Delta w^\varepsilon + \mu w^\varepsilon &=&   h^\varepsilon & \text{in } \Omega \times(0,T),  \\
\nabla w^\varepsilon \cdot \nu  &=& 0 & \text{on } \Gamma\times(0,T),  \\
w^\varepsilon(0) &=& w_0   & \text{on } \Omega.
\end{array}
\right.
\end{align}
For $1<p<\infty$,  
\begin{align}
 \|\Delta w^\varepsilon \|_{L^p(Q_T)}  \le  \left( \frac{\varepsilon}{p\mu} \right)^{\frac{1}{p}} \|\Delta w_0\|_{L^p(\Omega)} +  \frac{C_{1,\mu/\lambda,p}^{\mathsf{PM}}}{\lambda}  \|h^\varepsilon\|_{L^p(Q_T)}  ,
\label{Estimate.MRSlowEvol}  
\end{align}
where $C_{1,\mu/\lambda,p}^{\mathsf{PM}}$ is defined by Lemma~\ref{Lem.ParaMaximalRegularity}.
\end{lemma}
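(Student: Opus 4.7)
The strategy is to split $w^\varepsilon = w_1^\varepsilon + w_2^\varepsilon$, where $w_1^\varepsilon$ absorbs the initial data (with zero forcing) and $w_2^\varepsilon$ absorbs the forcing $h^\varepsilon$ (with zero initial data); both components satisfy the homogeneous Neumann condition. The lemma then follows from a triangle inequality, estimating each piece separately.

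For $w_1^\varepsilon$, I would exploit the fact that the Neumann heat semigroup $e^{(t/\varepsilon)(\lambda\Delta-\mu I)}$ commutes with $\Delta$, so that $\Delta w_1^\varepsilon(t) = e^{(t/\varepsilon)(\lambda\Delta-\mu I)} \Delta w_0$. Since the pure Neumann heat semigroup is an $L^p$-contraction (a one-line energy computation $\tfrac{d}{dt}\|u\|_p^p = -p\lambda(p-1)\int |u|^{p-2}|\nabla u|^2 \le 0$), the decay rate $\mu$ yields
\[
\|\Delta w_1^\varepsilon(t)\|_{L^p(\Omega)} \le e^{-\mu t/\varepsilon} \|\Delta w_0\|_{L^p(\Omega)} .
\]
Integrating the $p$-th power in time gives $\|\Delta w_1^\varepsilon\|_{L^p(Q_T)}^p \le \frac{\varepsilon}{p\mu}(1-e^{-p\mu T/\varepsilon}) \|\Delta w_0\|_{L^p(\Omega)}^p$, which produces exactly the first term of \eqref{Estimate.MRSlowEvol}. (If one prefers to avoid invoking the semigroup directly, the same bound follows by setting $W:=\Delta w_1^\varepsilon$, noting the compatibility giving $\nabla W\cdot\nu=0$ on $\Gamma$, testing the equation for $W$ by $|W|^{p-2}W$, and dropping the non-negative gradient term.)

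For $w_2^\varepsilon$, the key idea—following the reasoning of \cite[Lemma 2.5]{tang2023rigorous}—is to rescale out the slow evolution. Introducing $\tilde w^\varepsilon(x,\tau) := w_2^\varepsilon(x,\varepsilon\tau)$ and $\tilde h^\varepsilon(x,\tau):=h^\varepsilon(x,\varepsilon\tau)$ converts \eqref{System.SmallEvolution} with $w_0\equiv 0$ into the standard parabolic problem
\[
\partial_\tau \tilde w^\varepsilon - \lambda \Delta \tilde w^\varepsilon + \mu \tilde w^\varepsilon = \tilde h^\varepsilon \quad \text{on }\Omega\times(0,T/\varepsilon),
\]
with zero initial data and homogeneous Neumann condition. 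The standard parabolic maximal regularity of Lemma \ref{Lem.ParaMaximalRegularity}, combined with the elementary spatial-temporal rescaling that encodes the dependence of the constant on $\lambda$ (namely $C^{\mathsf{PM}}_{\lambda,\mu,p} = \lambda^{-1} C^{\mathsf{PM}}_{1,\mu/\lambda,p}$, obtained by dividing the equation by $\lambda$ and rescaling time $s=\lambda\tau$), yields
\[
\|\Delta \tilde w^\varepsilon\|_{L^p(\Omega\times(0,T/\varepsilon))} \le \frac{C^{\mathsf{PM}}_{1,\mu/\lambda,p}}{\lambda} \|\tilde h^\varepsilon\|_{L^p(\Omega\times(0,T/\varepsilon))}.
\]
Undoing the time rescaling (each $L^p$-in-time norm gains a factor $\varepsilon^{-1/p}$, so the factors cancel on both sides) gives precisely the second term of \eqref{Estimate.MRSlowEvol}.

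The main subtlety lies in making the $\lambda$-dependence of the maximal-regularity constant explicit, i.e. in justifying the identity $C^{\mathsf{PM}}_{\lambda,\mu,p}=\lambda^{-1}C^{\mathsf{PM}}_{1,\mu/\lambda,p}$ and showing that the time-rescaling trick does not deteriorate it as $\varepsilon\to 0$. The point is that rescaling time by $\varepsilon$ preserves the \emph{form} of the parabolic operator on $(0,T/\varepsilon)$, and the maximal-regularity constant for the operator $\partial_\tau - \lambda\Delta + \mu$ depends on $\lambda,\mu,p$ but \emph{not} on the length of the time interval; this is what allows the bound to be uniform in $\varepsilon$. The rest is bookkeeping.
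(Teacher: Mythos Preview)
Your proof is correct and follows essentially the same approach as the paper: split into a homogeneous part carrying the initial data and a zero-initial-data forced part, handle the former via the $L^p$-contraction of the Neumann heat semigroup, and handle the latter by rescaling time to remove the $\varepsilon$ and then invoking Lemma~\ref{Lem.ParaMaximalRegularity}. The only cosmetic difference is that the paper rescales by $\tau=\lambda t/\varepsilon$ in one step (landing directly on the unit-diffusion operator $\partial_\tau-\Delta+(\mu/\lambda)$) before splitting, whereas you split first and rescale by $1/\varepsilon$ and then by $\lambda$ separately; the two routes are equivalent.
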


\begin{proof} With $t\in (0,T)$, let us consider the scaling 
\begin{align*}
\tau := \frac{\lambda}{\varepsilon} t, \quad \overline w^\varepsilon(x,\tau):= w^\varepsilon \bigg(x,\frac{\varepsilon}{\lambda}\tau\bigg) \quad \text{for } \tau \in \bigg(0,\frac{\lambda T}{\varepsilon}\bigg).
\end{align*}
We recast Problem \eqref{System.SmallEvolution} as   
\begin{align} 
\label{System.SlowEvolScaling}
\left\{ \begin{array}{rlllll}
\partial_\tau \overline w^\varepsilon - \Delta \overline  w^\varepsilon + \dfrac{\mu}{\lambda} \overline w^\varepsilon &=&  \dfrac{1}{\lambda} \, h^\varepsilon & \text{in } \Omega \times(0,\lambda T/\varepsilon),  \\
\nabla \overline w^\varepsilon \cdot \nu  &=& 0 & \text{on } \Gamma \times(0,\lambda T/\varepsilon),  \\
\overline w^\varepsilon(0) &=& w_0   & \text{on } \Omega.
\end{array}
\right.
\end{align}
By the Duhamel's principle, the solution to \eqref{System.SlowEvolScaling} can be split into the sum of $\overline w_1^\varepsilon$ and  $\overline w_2^\varepsilon$, where $\overline w_1^\varepsilon(x,\tau)=e^{\tau(\Delta-(\mu/\lambda)I)} w_0(x)$ is the homogeneous solution to  \eqref{System.SlowEvolScaling}, and  $\overline w_2^\varepsilon$ is the solution to \eqref{System.SlowEvolScaling} with zero initial data, i.e. 
\begin{align*} 
\left\{ \begin{array}{rlllll}
\partial_\tau \overline w^\varepsilon_2 - \Delta \overline  w^\varepsilon_2  + \dfrac{\mu}{\lambda} \overline w^\varepsilon_2  &=&  \dfrac{1}{\lambda} \, h^\varepsilon  & \text{in } \Omega \times(0,\lambda T/\varepsilon),  \\
\nabla \overline w^\varepsilon_2 \cdot \nu  &=& 0 & \text{on } \Gamma \times(0,\lambda T/\varepsilon),  \\
\overline w^\varepsilon_2(0) &=& 0   & \text{on } \Omega.
\end{array}
\right.
\end{align*}
For the component $\overline w_2^\varepsilon$, we apply Lemma~\ref{Lem.ParaMaximalRegularity} to see that  
\begin{align*}
\|\Delta \overline w_2^\varepsilon\|_{L^p(\Omega\times(0,\, \lambda T/\varepsilon))} \le \frac{C_{1,\mu/\lambda,p}^{\mathsf{PM}}}{\lambda} \| h^\varepsilon \|_{L^p(\Omega\times(0,\, \lambda T/\varepsilon))}. 
\end{align*}
Moreover, by the contraction property of the Neumann heat semigroup (cf. \eqref{HeatSemigroup.Contraction}),     
\begin{align*}
 \|\Delta \overline w_1^\varepsilon\|_{L^p(\Omega\times(0,\, \lambda T/\varepsilon))}   & = \left( \int_0^{\frac{\lambda T}{\varepsilon}} \|e^{\tau(\Delta-(\mu/\lambda)I)} \Delta w_0\|_{L^p(\Omega)}^p d\tau \right)^{\frac{1}{p}} \\ 
& \le  \left( \int_0^{\frac{\lambda T}{\varepsilon}} e^{- p(\mu/\lambda)\tau}  d\tau \right)^{\frac{1}{p}} \|\Delta w_0\|_{L^p(\Omega)}  
 \le \left( \frac{\lambda}{p\mu} \right)^{\frac{1}{p}} \|\Delta w_0\|_{L^p(\Omega)} .
\end{align*}
Then, $\Delta \overline w^\varepsilon$ is estimated as 
\begin{align*}
\|\Delta \overline w^\varepsilon\|_{L^p(\Omega\times(0,\, \lambda T/\varepsilon))}  & \le \|\Delta \overline w_1^\varepsilon\|_{L^p(\Omega\times(0,\, \lambda T/\varepsilon))}  + \|\Delta \overline w_2^\varepsilon\|_{L^p(\Omega\times(0,\, \lambda T/\varepsilon))} \\
& \le \left( \frac{\lambda}{p\mu} \right)^{\frac{1}{p}} \|\Delta w_0\|_{L^p(\Omega)} +  \frac{C_{1,\mu/\lambda,p}^{\mathsf{PM}}}{\lambda} \| h^\varepsilon \|_{L^p(\Omega\times(0,\, \lambda T/\varepsilon))},  
\end{align*}
which, after a change of variables, gives 
\begin{align*}
 \left( \frac{\lambda}{\varepsilon} \right)^{\frac{1}{p}} \|\Delta w^\varepsilon \|_{L^p(Q_T)}  \le \left( \frac{\lambda}{p\mu} \right)^{\frac{1}{p}} \|\Delta w_0\|_{L^p(\Omega)} +  \frac{C_{1,\mu/\lambda,p}^{\mathsf{PM}}}{\lambda} \left( \frac{\lambda}{\varepsilon} \right)^{\frac{1}{p}} \|h^\varepsilon\|_{L^p(Q_T)} ,  
\end{align*} 
and therefore, the estimate \eqref{Estimate.MRSlowEvol} is proved. 
\end{proof}

\begin{lemma}[Feedback from prey to predator] \label{Lem.Eps.Feedback12to3} If there exists $\delta_0>0$ such that
\begin{align}
\sup_{\varepsilon>0}  \intQT  (u_i^\varepsilon)^{2+\delta_0}  \le C_T, \quad i=1,2,
\label{Lem.Eps.Feedback12to3.State1}
\end{align}
then
\begin{align}
\label{Lem.Eps.Feedback12to3.State2}
\sup_{\varepsilon>0} \left( \sup_{0\le t\le T} \int_\Omega (u_3^\varepsilon(t))^{1+\delta_0} +  \intQT  (u_3^\varepsilon)^{2+\delta_0} \right) \le C_T .
\end{align} 
\end{lemma}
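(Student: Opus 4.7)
The plan is to mimic the structure of Lemma \ref{Lem.Feedback12to3}, with the crucial observation that the chemotactic terms in the equation for $u_3^\varepsilon$ involve $\nabla v_i^\varepsilon$ for $i=1,2$, whose equations are \emph{elliptic} (no slow-evolution term), so that the elliptic maximal regularity of Lemma \ref{Lem.MaximalRegularity} applies with an $\varepsilon$-independent constant. This is what allows the argument of Lemma \ref{Lem.Feedback12to3} to be upgraded to a uniform-in-$\varepsilon$ statement without any new idea, and in particular without invoking the more delicate Lemma \ref{Lem.MRSlowEvolution}.

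Concretely, set $p:=1+\delta_0$, test the equation for $u_3^\varepsilon$ against $p(u_3^\varepsilon)^{p-1}$, and integrate over $(0,t)\times \Omega$. As in the proof of Lemma \ref{Lem.EnergyEstimate}, integration by parts and the pointwise bound $f_3(u_1^\varepsilon,u_2^\varepsilon,u_3^\varepsilon)\le (\gamma_1 m_1+\gamma_2 m_2)u_3^\varepsilon - l (u_3^\varepsilon)^2$ yield
\begin{equation*}
\int_\Omega (u_3^\varepsilon(t))^p + d_3 p(p-1)\intQt (u_3^\varepsilon)^{p-2}|\nabla u_3^\varepsilon|^2 + pl \intQt (u_3^\varepsilon)^{p+1}
\le \int_\Omega u_{30}^p + C_p \intQt (u_3^\varepsilon)^p - (p-1)\sum_{i=1}^2 \chi_{3i}\intQt (u_3^\varepsilon)^p \Delta v_i^\varepsilon,
\end{equation*}
where the last term comes from integrating by parts the chemotactic flux $\chi_{3i}\nabla\cdot(u_3^\varepsilon \nabla v_i^\varepsilon)$ twice.

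For the chemotactic term I would use Young's inequality with a small parameter $\eta>0$,
\begin{equation*}
\intQt (u_3^\varepsilon)^p |\Delta v_i^\varepsilon| \le \eta \intQt (u_3^\varepsilon)^{p+1} + C_{\eta,p} \intQt |\Delta v_i^\varepsilon|^{p+1},
\end{equation*}
and then apply the elliptic maximal regularity of Lemma \ref{Lem.MaximalRegularity} to the elliptic equation $-\lambda_i \Delta v_i^\varepsilon + \mu_i v_i^\varepsilon = u_i^\varepsilon$, which yields
\begin{equation*}
\intQt |\Delta v_i^\varepsilon|^{p+1} \le (C^{\mathsf{EM}})^{p+1} \intQt (u_i^\varepsilon)^{p+1} = (C^{\mathsf{EM}})^{p+1} \intQt (u_i^\varepsilon)^{2+\delta_0}\le C_T
\end{equation*}
by the hypothesis \eqref{Lem.Eps.Feedback12to3.State1}. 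Crucially, $C^{\mathsf{EM}}$ is independent of $\varepsilon$, so the resulting bound is uniform.

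Choosing $\eta$ small enough to absorb the $\intQt(u_3^\varepsilon)^{p+1}$ term into the kinetic dissipation $pl\intQt(u_3^\varepsilon)^{p+1}$ on the left-hand side, and controlling $\intQt(u_3^\varepsilon)^p$ by $\int_0^t \int_\Omega (u_3^\varepsilon)^{1+\delta_0} \, ds$ (noting that $p=1+\delta_0$), I arrive at
\begin{equation*}
\int_\Omega (u_3^\varepsilon(t))^{1+\delta_0} + \frac{pl}{2}\intQt (u_3^\varepsilon)^{2+\delta_0} \le C_T + C_{\delta_0}\int_0^t \int_\Omega (u_3^\varepsilon(s))^{1+\delta_0}\,ds,
\end{equation*}
and Grönwall's inequality closes the estimate, giving both the $L^\infty(0,T;L^{1+\delta_0}(\Omega))$ bound (by taking supremum in $t$) and the $L^{2+\delta_0}(Q_T)$ bound (by evaluating at $t=T$). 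The main obstacle I anticipate is merely bookkeeping to verify that every constant traced through the Young and maximal-regularity estimates depends only on $\delta_0$, $T$, and fixed parameters of the system, and not on $\varepsilon$; since $v_1^\varepsilon,v_2^\varepsilon$ solve $\varepsilon$-independent elliptic equations, this is automatic.
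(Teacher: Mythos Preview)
Your proposal is correct and follows essentially the same approach as the paper, which simply states that the lemma is proved ``similarly to Lemma \ref{Lem.Feedback12to3} based on Lemma \ref{Lem.MaximalRegularity}.'' You have correctly identified the key point: because the equations for $v_1^\varepsilon,v_2^\varepsilon$ are elliptic with no slow-evolution term, the elliptic maximal regularity of Lemma \ref{Lem.MaximalRegularity} supplies the $\varepsilon$-independent constant $C^{\mathsf{EM}}$, so the argument of Lemma \ref{Lem.Feedback12to3} carries over verbatim to give uniform bounds.
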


\begin{proof} This lemma can be proved similarly to Lemma~\ref{Lem.Feedback12to3} based on Lemma~\ref{Lem.MaximalRegularity}. 
\end{proof}

\begin{lemma}[Feedback from predator to prey]
\label{Lem.Eps.Feedback}
If there exists $\delta_*>0$ such that
\begin{align}
\sup_{\varepsilon>0}   \intQT (u_3^\varepsilon)^{2+\delta_*}   \le C_T,  
\label{Lem.Eps.Feedback.State1}
\end{align}
then, for $i=1,2$,
\begin{align}
\sup_{\varepsilon>0} \left(  \sup_{0\le t\le T} \int_\Omega (u_i^\varepsilon(t))^{1+2\delta_*} + \intQT (u_i^\varepsilon)^{2+2\delta_*}  \right)   \le C_T.
\label{Lem.Eps.Feedback.State2}
\end{align} 
\end{lemma}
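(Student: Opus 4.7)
The plan is to combine the slow-evolution maximal regularity of Lemma \ref{Lem.MRSlowEvolution} with a Gr\"onwall argument on the $L^{1+2\delta_*}$-energy of $u_i^\varepsilon$, $i=1,2$. The restriction ``$p$ close to $1$'' in Lemma \ref{Lem.uEps.LInftyL1+} arose only from the $L^2(Q_T)$-level bound on $\Delta v_3^\varepsilon$ used there; under the stronger hypothesis \eqref{Lem.Eps.Feedback.State1} the analysis can be pushed much further. I would first apply Lemma \ref{Lem.MRSlowEvolution} with exponent $2+\delta_*$ to the $v_3^\varepsilon$-equation: since $\Delta v_{30}\in L^\infty(\Omega)$ by Assumption \ref{Assumption.Initial1} and the source is uniformly in $L^{2+\delta_*}(Q_T)$ by \eqref{Lem.Eps.Feedback.State1}, we obtain $\|\Delta v_3^\varepsilon\|_{L^{2+\delta_*}(Q_T)}\le C_T$ uniformly in $\varepsilon$. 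Combined with the $L^\infty(0,T;L^q(\Omega))$-bound of Lemma \ref{Lem.vEps.LInftyLp} and the Neumann elliptic $W^{2,p}$-estimate, this gives $\|v_3^\varepsilon\|_{L^{2+\delta_*}(0,T;W^{2,2+\delta_*}(\Omega))}\le C_T$, whence the two-dimensional Sobolev--Morrey embedding $W^{2,2+\delta_*}(\Omega)\hookrightarrow W^{1,\infty}(\Omega)$ (valid since $2+\delta_*>2$) yields
\[
\|\nabla v_3^\varepsilon\|_{L^{2+\delta_*}(0,T;L^\infty(\Omega))}\le C_T,
\]
uniformly in $\varepsilon$, and in particular $t\mapsto \|\nabla v_3^\varepsilon(t)\|_{L^\infty(\Omega)}^2 \in L^1(0,T)$ with a uniform bound by H\"older's inequality.

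Setting $p=1+2\delta_*$, I would then test the $u_i^\varepsilon$-equation against $p(u_i^\varepsilon)^{p-1}$, drop the non-positive Holling predation and inter-specific competition contributions, use $f_i\le \alpha_i u_i^\varepsilon(1-u_i^\varepsilon)$ for the remaining kinetics, and apply Young's inequality to the chemotactic term to split off a piece absorbed by the diffusion dissipation $\tfrac{d_ip(p-1)}{2}\int_\Omega (u_i^\varepsilon)^{p-2}|\nabla u_i^\varepsilon|^2$ and a remainder bounded by $C_p\|\nabla v_3^\varepsilon(t)\|_{L^\infty(\Omega)}^2\int_\Omega (u_i^\varepsilon)^p$. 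This leads to
\[
\tfrac{d}{dt}\int_\Omega (u_i^\varepsilon)^p \le C_p\bigl(1+\|\nabla v_3^\varepsilon(t)\|_{L^\infty(\Omega)}^2\bigr)\int_\Omega (u_i^\varepsilon)^p + C_p
\]
after dropping the dissipation and the $L^{p+1}$-term on the left. Gr\"onwall's inequality, using the uniform $L^1(0,T)$-control of $\|\nabla v_3^\varepsilon\|_{L^\infty(\Omega)}^2$ from the previous step together with $\int_\Omega u_{i0}^{1+2\delta_*}<\infty$ (Assumption \ref{Assumption.Initial1}), produces $\sup_{0\le t\le T}\int_\Omega (u_i^\varepsilon)^{1+2\delta_*}\le C_T$ uniformly in $\varepsilon$. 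Reinserting this supremum bound into the integrated energy inequality then gives $\intQT (u_i^\varepsilon)^{2+2\delta_*}\le C_T$, which completes \eqref{Lem.Eps.Feedback.State2}.

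The delicate point is the $\varepsilon$-uniformity in the first step: a direct application of standard parabolic $L^p$-maximal regularity to the $v_3^\varepsilon$-equation would produce constants blowing up as $\varepsilon\to 0$, which would nullify the purpose of this feedback lemma in the FSD-limit analysis. It is precisely the rescaling argument behind Lemma \ref{Lem.MRSlowEvolution} that keeps the maximal-regularity constant $\varepsilon$-independent, and verifying that every subsequent invocation of the $v_3^\varepsilon$-regularity preserves this $\varepsilon$-uniformity (including the time-slice-wise elliptic $W^{2,p}$-estimate, for which the uniform $L^q$-control from Lemma \ref{Lem.vEps.LInftyLp} suffices) is the main bookkeeping task.
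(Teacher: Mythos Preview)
Your argument is correct, but it follows a genuinely different route from the paper. The paper proceeds in two stages without ever invoking pointwise-in-space gradient control: it first integrates by parts \emph{twice} in the chemotactic term to obtain $\intQT (u_i^\varepsilon)^{1+\delta_*}\Delta v_3^\varepsilon$, applies H\"older against the $L^{2+\delta_*}(Q_T)$-bound on $\Delta v_3^\varepsilon$ from Lemma~\ref{Lem.MRSlowEvolution}, and concludes $\sup_t\|u_i^\varepsilon\|_{L^{1+\delta_*}}+\intQT|\nabla(u_i^\varepsilon)^{(1+\delta_*)/2}|^2\le C_T$; it then uses the two-dimensional Gagliardo--Nirenberg inequality \eqref{Lem.uEps.LInftyL1+.Proof1a} to upgrade this to $u_i^\varepsilon\in L^{2+2\delta_*}(Q_T)$, feeds that back into Lemma~\ref{Lem.MRSlowEvolution} to get $\Delta v_3^\varepsilon\in L^{2+2\delta_*}(Q_T)$, and repeats the H\"older step at $p=1+2\delta_*$.

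Your approach short-circuits this bootstrap by invoking the Sobolev--Morrey embedding $W^{2,2+\delta_*}(\Omega)\hookrightarrow W^{1,\infty}(\Omega)$ (together with Lemma~\ref{Lem.vEps.LInftyLp} and the elliptic $W^{2,p}$-estimate) to turn the $L^{2+\delta_*}(Q_T)$-control of $\Delta v_3^\varepsilon$ into time-integrable pointwise control of $|\nabla v_3^\varepsilon|$. This is cleaner and in fact stronger than stated: once $\int_0^T\|\nabla v_3^\varepsilon(t)\|_{L^\infty}^2\,dt\le C_T$ is in hand, your Gr\"onwall step works for \emph{every} $p>1$, so you obtain $u_i^\varepsilon\in L^\infty(0,T;L^p)\cap L^{p+1}(Q_T)$ for all $p$ in one shot, essentially collapsing the iterative doubling $\delta_n=2^n\delta$ in the paper's Lemma~\ref{Lem.UVeps.Bootstrap} into a single step (for $i=1,2$). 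The price is the extra ingredients---Lemma~\ref{Lem.vEps.LInftyLp} and the Calder\'on--Zygmund-type elliptic estimate---while the paper's argument stays within pure energy methods and sets up the incremental scheme $\delta_*\mapsto 2\delta_*$ that it reuses later.
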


\begin{proof} Firstly, we prove that $u_1^\varepsilon,u_2^\varepsilon$ are uniformly bounded in $L^{2+2\delta_*}(Q_T)$. By applying Lemma~\ref{Lem.MRSlowEvolution} to the equation for $v_3^\varepsilon$, i.e.
\begin{align*}
\varepsilon \partial_t v_3^\varepsilon - \lambda_3 \Delta v_3^\varepsilon + \mu_3 v_3^\varepsilon = u_3^\varepsilon , \quad (\nabla v_3^\varepsilon \cdot \nu)|_{\Gamma_T} = 0, \quad v_3^\varepsilon (0)  = v_{30},
\end{align*}
 we obtain, thanks to \eqref{Lem.Eps.Feedback.State1},  
\begin{equation}
\begin{aligned}
 \sup_{\varepsilon>0} \|\Delta v_3^\varepsilon \|_{L^{2+\delta_*}(Q_T)}  
& \le C_{\mu_3,\delta_*} \|\Delta v_{30}\|_{L^{2+\delta_*}(\Omega)} +   \frac{C_{1,\mu_3/\lambda_3,2+\delta_*}^{\mathsf{PM}}}{\lambda_3} \sup_{\varepsilon>0}  \|u_3^\varepsilon\|_{L^{2+\delta_*}(Q_T)} \\
& \le  C_{\mu_3,\delta_*} \|\Delta v_{30}\|_{L^{2+\delta_*}(\Omega)} +   \frac{C_{1,\mu_3/\lambda_3,2+\delta_*}^{\mathsf{PM}}}{\lambda_3} C_T ,  
\end{aligned}
\label{Lem.Eps.Feedback.Proof1}
\end{equation}
where $C_{1,\mu_3/\lambda_3,2+\delta_*}^{\mathsf{PM}}$ is defined by Lemma~\ref{Lem.ParaMaximalRegularity}, and $C_{\mu_3,\delta_*}:= ( (2+\delta_*)\mu_3 )^{-1/(2+\delta_*)}$. Now, by employing the computation \eqref{Lem.uEps.LInftyL1+.Proof2} with $p=1+\delta_*$, 
\begin{align*} 
& \sup_{0\le t\le T} \int_\Omega (u_i^\varepsilon(t))^{1+\delta_*}  +   \frac{4d_i\delta_*}{1+\delta_*}  \intQT |\nabla (u_i^\varepsilon)^{\frac{1+\delta_*}{2}}|^2 + (1+\delta_*) \alpha_i \intQT (u_i^\varepsilon)^{2+\delta_*}  \\
& \le  \int_\Omega u_{i0}^{1+\delta_*} + (1+\delta_*) \alpha_i \intQT (u_i^\varepsilon)^{1+\delta_*} + \delta_* \chi_i \intQT (u_i^\varepsilon)^{1+\delta_*} \Delta v_3^\varepsilon \\
& \le \int_\Omega u_{i0}^{1+\delta_*} + (1+\delta_*) \alpha_i \intQT (u_i^\varepsilon)^{1+\delta_*} + \delta_* \chi_i \|u_i^\varepsilon\|_{L^{2+\delta_*}(Q_T)}^{1+\delta_*} \|\Delta v_3^\varepsilon\|_{L^{2+\delta_*}(Q_T)},
\end{align*}
for $i=1,2$, which after taking into account the estimate \eqref{Lem.Eps.Feedback.Proof1} gives   
\begin{align*}
\sup_{0\le t\le T} \int_\Omega (u_i^\varepsilon(t))^{1+\delta_*}  +  \intQT |\nabla (u_i^\varepsilon)^{\frac{1+\delta_*}{2}}|^2 \le C_T.
\end{align*}
Then, using the Gagliardo–Nirenberg interpolation inequality in the same way as \eqref{Lem.uEps.LInftyL1+.Proof1a},  
\begin{align*}
\intQT (u_i^\varepsilon)^{2+2\delta_*} 
& \le C \left( \sup_{0\le t\le T} \int_\Omega (u_i^\varepsilon(t))^{1+\delta_*} \right) \intQT |\nabla (u_i^\varepsilon)^{\frac{1+\delta_*}{2}}|^2 \le C_T,  
\end{align*}
i.e. $u_1^\varepsilon, u_2^\varepsilon$ are uniformly bounded in $L^{2+2\delta_*}(Q_T)$, and so is $\Delta v_3^\varepsilon$ by applying Lemma~\ref{Lem.MRSlowEvolution}. This allows us to take $p=1+2\delta_*$ in  \eqref{Lem.uEps.LInftyL1+.Proof2},  which gives 
\begin{align*}
\sup_{\varepsilon>0}  \left( \sup_{0\le t\le T}  \int_\Omega (u_i^\varepsilon(t))^{1+2\delta_*} \right) \le C_T,
\end{align*} 
i.e. the estimate \eqref{Lem.Eps.Feedback.State2} is completely proved.
\end{proof}

\subsection{Uniform boundedness in $L^\infty(Q_T)$}
\label{Sec:UniBoundLInf}

Taking the uniform improved regularity and the feedback arguments in the previous sections, we will obtain the uniform boundedness of the $\varepsilon$-depending solution in $L^\infty(Q_T)$. 

\begin{lemma} 
\label{Lem.UVeps.Bootstrap}
For any $1\le q<\infty$ yields
\begin{gather}
 \sup_{\varepsilon>0} \left( \sum_{i=1}^3 \| u_i^\varepsilon \|_{L^\infty(Q_T)} + \sum_{i=1}^3 \| u_i^\varepsilon \|_{L^2(0,T;H^1(\Omega))}   \right)   \le C_T,   \label{Lem.UVeps.Bootstrap.State1} \\
 \displaystyle \sup_{\varepsilon>0} \left( \sum_{i=1}^2 \| v_i^\varepsilon \|_{L^\infty(0,T;W^{2,\infty}(\Omega))} + \| v_3^\varepsilon \|_{L^\infty(Q_T)}  +   \| v_3^\varepsilon \|_{L^q(0,T;W^{2,q}(\Omega))}  \right)  \le C_T.
 \label{Lem.UVeps.Bootstrap.State2}
\end{gather}
\end{lemma}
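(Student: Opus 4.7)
The plan is to mirror the bootstrap-plus-smoothing scheme that proved Theorem \ref{Theo.GlobalClassicalSol}, but to track carefully that every constant is $\varepsilon$-independent. The starting seed is Lemma \ref{Lem.uEps.LInftyL1+}, which supplies some $\delta>0$ with $\sup_{\varepsilon>0}\|u_i^\varepsilon\|_{L^{2+\delta}(Q_T)}\le C_T$. Iterating the two feedback lemmas of Subsection \ref{Sec:FSL:Feedback}, namely Lemma \ref{Lem.Eps.Feedback12to3} (prey-to-predator, which preserves the excess exponent) followed by Lemma \ref{Lem.Eps.Feedback} (predator-to-prey, which doubles it), produces the geometric progression $\delta_n=2^n\delta$, and therefore after finitely many rounds
\begin{align*}
\sup_{\varepsilon>0}\sum_{i=1}^{3}\|u_i^\varepsilon\|_{L^q(Q_T)}\le C_{q,T} \qquad \text{for any finite } q.
\end{align*}

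From this uniform $L^q$-control on the $u_i^\varepsilon$, I would pass to the $v$-equations. For $i=1,2$ the elliptic maximal regularity (Lemma \ref{Lem.MaximalRegularity}) applied pointwise in time delivers $v_i^\varepsilon\in L^\infty(0,T;W^{2,q}(\Omega))$ uniformly in $\varepsilon$; combined with parabolic H\"older regularity of the classical solution $u_i^\varepsilon$ and Schauder estimates in two dimensions, this upgrades to the $L^\infty(0,T;W^{2,\infty}(\Omega))$ bound required in \eqref{Lem.UVeps.Bootstrap.State2}. For $v_3^\varepsilon$, Lemma \ref{Lem.MRSlowEvolution} applied with source $u_3^\varepsilon$ and arbitrary finite $q$ yields
\begin{align*}
\sup_{\varepsilon>0}\|\Delta v_3^\varepsilon\|_{L^q(Q_T)} \le \Bigl(\tfrac{1}{q\mu_3}\Bigr)^{1/q}\|\Delta v_{30}\|_{L^q(\Omega)}+\frac{C_{1,\mu_3/\lambda_3,q}^{\mathsf{PM}}}{\lambda_3}\sup_{\varepsilon>0}\|u_3^\varepsilon\|_{L^q(Q_T)}\le C_{q,T},
\end{align*}
so $v_3^\varepsilon$ is uniformly bounded in $L^q(0,T;W^{2,q}(\Omega))$ and, via Sobolev embedding with $q$ large, $\nabla v_3^\varepsilon$ is uniformly bounded in $L^q(0,T;L^\infty(\Omega))$.

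With these ingredients the $L^\infty(Q_T)$-estimate for $u_i^\varepsilon$ reproduces verbatim the Duhamel/fractional-power argument of the proof of Theorem \ref{Theo.GlobalClassicalSol}: the Neumann heat semigroup $\{e^{t(d_i\Delta-I)}\}$ and the embedding \eqref{Lemma.LpEstimate.Proof2} depend only on $d_i$, $\sigma$, $q$, and the $\varepsilon$-independence of the $L^q$-norms of $u_i^\varepsilon$ and $\nabla v_j^\varepsilon$ obtained above transfers directly into the conclusion. The uniform $L^\infty(Q_T)$-bound on $v_3^\varepsilon$ then follows by representing it through the rescaled heat semigroup used inside Lemma \ref{Lem.MRSlowEvolution}, combined with $u_3^\varepsilon\in L^\infty(Q_T)$ uniformly and $v_{30}\in L^\infty(\Omega)$ from Assumption \ref{Assumption.Initial1}. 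Finally, the uniform $L^2(0,T;H^1(\Omega))$-bound for $u_i^\varepsilon$ drops out of the energy computation \eqref{Lem.uEps.LInftyL1+.Proof3} with $p=2$, whose right-hand side is now controlled by the $L^\infty(Q_T)$-bound on $u_i^\varepsilon$ and $\|\Delta v_3^\varepsilon\|_{L^2(Q_T)}\le C_T$ from Lemma \ref{Lem.vEps.L2H2}.

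The main obstacle is the $\varepsilon$-dependence that would normally be introduced when handling the $v_3$-equation, since naive heat regularisation would generate constants blowing up as $\varepsilon\to 0$ through the time rescaling $\tau=(\lambda_3/\varepsilon)t$ stretching the interval to $(0,\lambda_3 T/\varepsilon)$. Lemma \ref{Lem.MRSlowEvolution} is precisely what makes the argument survive by converting the stretched interval into exponential decay of the homogeneous part, and I would be careful to invoke it only with \emph{finite} $q$ at each bootstrap stage, never pushing $q\to\infty$ directly, so that the maximal-regularity constant $C_{1,\mu_3/\lambda_3,q}^{\mathsf{PM}}$ stays finite and $\varepsilon$-free.
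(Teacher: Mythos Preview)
Your overall strategy matches the paper's: seed with Lemma \ref{Lem.uEps.LInftyL1+}, iterate Lemmas \ref{Lem.Eps.Feedback12to3} and \ref{Lem.Eps.Feedback} to reach all finite $L^q(Q_T)$ (and in fact $L^\infty(0,T;L^q(\Omega))$), then apply the Duhamel smoothing argument to upgrade to $L^\infty(Q_T)$, and finally read off the $v$-estimates from maximal regularity. Two points of difference deserve comment.

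First, your route to $v_i^\varepsilon\in L^\infty(0,T;W^{2,\infty}(\Omega))$ for $i=1,2$ via ``parabolic H\"older regularity of $u_i^\varepsilon$ and Schauder estimates'' has a genuine gap: uniform-in-$\varepsilon$ H\"older bounds on $u_i^\varepsilon$ would require uniform $L^\infty(Q_T)$ control of the drift coefficient $\nabla v_3^\varepsilon$, which at this stage you only have in $L^q(0,T;L^\infty(\Omega))$. The paper avoids this entirely by reordering the argument: it first establishes the uniform $L^\infty(Q_T)$-bound on $u_i^\varepsilon$ via the semigroup, then infers $v_i^\varepsilon\in L^\infty(Q_T)$ from $W^{2,q}(\Omega)\hookrightarrow L^\infty(\Omega)$, and finally reads $\Delta v_i^\varepsilon=(\mu_i v_i^\varepsilon-\zeta_i u_i^\varepsilon)/\lambda_i\in L^\infty(Q_T)$ directly from the elliptic equation. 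You should adopt this order rather than invoke Schauder.

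Second, for $v_3^\varepsilon\in L^\infty(Q_T)$ your semigroup representation (contraction of $e^{\tau(\Delta-(\mu_3/\lambda_3)I)}$ on $L^\infty$ combined with $u_3^\varepsilon\in L^\infty(Q_T)$) is a valid alternative. The paper instead tests the $v_3^\varepsilon$-equation against $(v_3^\varepsilon)^{p-1}$ to obtain
\[
\sup_{\varepsilon>0}\|v_3^\varepsilon\|_{L^p(Q_T)}\le \mu_3^{-1/p}\|v_{30}\|_{L^p(\Omega)}+\mu_3^{-1}\sup_{\varepsilon>0}\|u_3^\varepsilon\|_{L^p(Q_T)}
\]
with constants explicit in $p$, and then sends $p\to\infty$. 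Both arguments are correct; yours is arguably more direct.
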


\begin{proof} First of all, we prove \eqref{Lem.UVeps.Bootstrap.State1} by performing a bootstrap argument. By Lemma~\ref{Lem.uEps.LInftyL1+}, we have the uniform boundedness of $u_1^\varepsilon,u_2^\varepsilon,u_3^\varepsilon$ in $L^{2+\delta}(Q_T)$, $\delta>0$. Then, thanks to Lemma~\ref{Lem.Eps.Feedback}, $u_1^\varepsilon,u_2^\varepsilon$ are uniformly bounded in $L^\infty(0,T;L^{2+\delta_1}(\Omega)) \cap L^{2+\delta_1}(Q_T)$, with $\delta_1:=2\delta$, and therefore, thanks to Lemma~\ref{Lem.Eps.Feedback12to3}, so is $u_3^\varepsilon$. 
By iterating this argument,  
\begin{align*}
\sup_{\varepsilon>0} \left( \sup_{0\le t\le T}  \int_\Omega (u_i^\varepsilon(t))^{1+\delta_n} +  \intQT (u_i^\varepsilon)^{2+\delta_n}  \right)   \le C_T  
\end{align*}
with $\delta_n=2^n\delta$. Since $\lim_{n\to \infty} \delta_n=\infty$,  
\begin{align}
\sup_{\varepsilon>0} \left( \sum_{i=1}^3 \|u_i^\varepsilon \|_{L^\infty(0,T;L^q(\Omega))} + \sum_{i=1}^3 \|u_i^\varepsilon \|_{L^q(Q_T)}    \right)   \le C_T  
\label{Lem.UVeps.Bootstrap.Proof1}
\end{align}
for any $1\le q<\infty$. By the smoothing effect of the heat semigroup with the same techniques as in the proof of Theorem~\ref{Theo.GlobalClassicalSol}, one can show that 
\begin{align*}
\sup_{\varepsilon>0} \sum_{i=1}^3 \|u_i^\varepsilon \|_{L^\infty(Q_T)}  
\le C_T  .
\end{align*}
The proof of Lemma~\ref{Lem.uEps.LInftyL1+} gives that $u_1^\varepsilon,u_2^\varepsilon,u_3^\varepsilon$ are uniformly bounded in $L^2(0,T;H^1(\Omega))$, i.e., \eqref{Lem.UVeps.Bootstrap.State1} is claimed.

\medskip

Now, we prove \eqref{Lem.UVeps.Bootstrap.State2}. For any $1< q<\infty$,   we claim that $(v_1^\varepsilon,v_2^\varepsilon)$ is uniformly bounded in $L^\infty(0,T;W^{2,q}(\Omega))$ by Lemma~\ref{Lem.MaximalRegularity}, and $v_3^\varepsilon$ in $L^q(0,T;W^{2,q}(\Omega))$ by Lemma~\ref{Lem.MRSlowEvolution}. The uniform boundedness then also guarantees that $(v_1^\varepsilon,v_2^\varepsilon)$ is uniformly bounded in $L^\infty(Q_T)^2$ due to the embedding $W^{2,q}(\Omega)\hookrightarrow L^\infty(\Omega)$ for a large $q$, and so is $(\Delta v_1^\varepsilon, \Delta v_2^\varepsilon)$.  
It remains to prove 
\begin{align}
\sup_{\varepsilon>0} \| v_3^\varepsilon \|_{L^\infty(Q_T)}  \le C_T .
\label{Lem.UVeps.Bootstrap.Proof3}
\end{align}
Indeed, multiplying the two sides of the equation for $v_3^\varepsilon$ by $(v_3^\varepsilon)^{p-1}$ gives   
\begin{align*}
& \mu_3 \intQT (v_3^\varepsilon)^p \\
&= \frac{\varepsilon}{p} \int_\Omega v_{30}^p - \frac{\varepsilon}{p} \int_\Omega (v_3^\varepsilon(T))^p  - (p-1) \lambda_3 \intQT (v_3^\varepsilon)^{p-2} |\nabla v_3^\varepsilon|^2 + \intQT u_3^\varepsilon (v_3^\varepsilon)^{p-1} \\
&\le  \frac{\varepsilon}{p} \int_\Omega v_{30}^p - \frac{\varepsilon}{p} \int_\Omega (v_3^\varepsilon(T))^p - (p-1) \lambda_3 \intQT (v_3^\varepsilon)^{p-2} |\nabla v_3^\varepsilon|^2 \\
& \hspace*{4.6cm} + \frac{1}{p} \mu_3^{-(p-1)} \intQT (u_3^\varepsilon)^p + \frac{p-1}{p} \mu_3 \intQT (v_3^\varepsilon)^{p}.
\end{align*} 
This yields 
\begin{align*}
\sup_{\varepsilon>0} \|v_3^\varepsilon\|_{L^p(Q_T)}  & \le  \left(  \frac{1}{\mu_3} \|v_{30}\|_{L^p(\Omega)}^p + \frac{1}{\mu_3^{p}}  \sup_{\varepsilon>0} \|u_3^\varepsilon\|_{L^p(Q_T)}^p \right)^{\frac{1}{p}} \\
& \le \frac{1}{\mu_3^{1/p}} \|v_{30}\|_{L^p(\Omega)} + \frac{1}{\mu_3} \sup_{\varepsilon>0} \|u_3^\varepsilon\|_{L^p(Q_T)},   
\end{align*}
which subsequently shows \eqref{Lem.UVeps.Bootstrap.Proof3} after sending $p$ to infinity.
\end{proof}
 
\subsection{Weak-to-strong convergence, passing to the limit}
\label{Sec:Weak2StrongPassing}

We will pass from the system \eqref{System.UV}-\eqref{Condition.Boun-Init.Epsilon} to the reduced system \eqref{System.UV.Limit}-\eqref{Condition.Boun-Init.Limit}, which is given in Theorem~\ref{Theo.FastSignalDiffLimit}. Due to the lack of time derivative in the equations for $v_1^\varepsilon, v_2^\varepsilon$, and the vanishing of the parabolicity in the equation for $v_3^\varepsilon$ (or more precisely, $\varepsilon \partial_t v_3^\varepsilon \to 0$ in some suitable sense), the establishment of strong convergence of $(v_1^\varepsilon,v_2^\varepsilon,v_3^\varepsilon)$ is non-standard. We overcome this difficulty in the following lemma, where we mainly use the energy equation method from \cite{ball2004global,henneke2016fast}.   

\begin{lemma}[Weak-to-strong convergence] 
\label{Lem:Weak-to-strong}
Assume that $h^\varepsilon$ is uniformly bounded in $L^\infty(0,T;L^2(\Omega))$. For each $\varepsilon>0$, let $w^\varepsilon$ be the weak solution to \eqref{System.SmallEvolution}, i.e., 
\begin{align}
\varepsilon \int_0^T \langle \partial_t w^\varepsilon, \phi \rangle + \lambda \intQT \nabla w^\varepsilon \cdot \nabla \phi + \mu \intQT w^\varepsilon\phi = \intQT h^\varepsilon \phi,
\label{WeakForm:wEps}  
\end{align}
for all $\phi \in C^\infty(\Omega \times [0,T))$, and $w^\varepsilon(0)=w_0 \in L^2(\Omega)$. If,  
\begin{align}
\left\{ 
\begin{array}{rllc}
h^\varepsilon &\rightarrow & h & \text{strongly in } L^2(0,T;L^2(\Omega)), \\ 
w^\varepsilon &\rightharpoonup& w & \text{weakly in } L^2(0,T;H^1(\Omega)),
\end{array} \right.
\label{Lem:WeakToStrong:State1}
\end{align} 
and $w$ is a weak solution to the problem $- \lambda \Delta w + \mu w = h$, $(\nabla w\cdot \nu)_{\Gamma_T}=0$, i.e.,
\begin{align}
\lambda \intQT \nabla w \cdot \nabla \phi + \mu \intQT w\phi = \intQT h \phi, \quad \forall \phi \in C_c^\infty(\Omega \times (0,T)) .
\label{WeakForm:w} 
\end{align}
Then, 
\begin{align} 
w^\varepsilon \rightarrow w \text{ strongly in } L^2(0,T;H^1(\Omega)). 
\label{Lem:WeakToStrong:State2}
\end{align}
\end{lemma}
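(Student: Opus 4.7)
The plan is to apply the energy-equation method combined with the Radon-Riesz property of Hilbert spaces: since $L^2(0,T;H^1(\Omega))$ is uniformly convex, weak convergence together with convergence of norms upgrades to strong convergence. Introducing the equivalent Hilbert norm
\[
\|u\|_*^2 := \lambda \|\nabla u\|^2_{L^2(Q_T)} + \mu \|u\|^2_{L^2(Q_T)}
\]
on $L^2(0,T;H^1(\Omega))$, which is equivalent to the standard norm because $\lambda,\mu>0$, it suffices to show $\|w^\varepsilon\|_* \to \|w\|_*$. Weak lower semicontinuity already gives $\|w\|_* \le \liminf_{\varepsilon\to 0}\|w^\varepsilon\|_*$, so only the matching $\limsup$-inequality must be established.

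To obtain the $\limsup$-bound, I would test \eqref{WeakForm:wEps} against $w^\varepsilon$ itself. This is admissible by density once one observes that the parabolic regularity from Lemma \ref{Lem.MRSlowEvolution} places $w^\varepsilon$ in $L^2(0,T;H^1(\Omega))$ with $\varepsilon\partial_t w^\varepsilon$ in the dual, hence $w^\varepsilon\in C([0,T];L^2(\Omega))$ and the Lions integration-by-parts identity $\int_0^T \langle \partial_t w^\varepsilon, w^\varepsilon\rangle = \frac{1}{2}(\|w^\varepsilon(T)\|_{L^2}^2 - \|w_0\|_{L^2}^2)$ applies. This yields the energy identity
\[
\frac{\varepsilon}{2}\|w^\varepsilon(T)\|^2_{L^2(\Omega)} + \|w^\varepsilon\|_*^2 = \frac{\varepsilon}{2}\|w_0\|^2_{L^2(\Omega)} + \intQT h^\varepsilon w^\varepsilon.
\]
Since $\frac{\varepsilon}{2}\|w^\varepsilon(T)\|^2_{L^2}\ge 0$, $\frac{\varepsilon}{2}\|w_0\|^2_{L^2}\to 0$, and $\intQT h^\varepsilon w^\varepsilon \to \intQT hw$ by the product of strong and weak convergences in $L^2(Q_T)$ (both ensured by \eqref{Lem:WeakToStrong:State1} after noting that weak convergence in $L^2(0,T;H^1(\Omega))$ implies weak convergence in $L^2(Q_T)$), we deduce
\[
\limsup_{\varepsilon\to 0} \|w^\varepsilon\|_*^2 \le \intQT hw.
\]

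To identify the right-hand side, I would test \eqref{WeakForm:w} with $\phi = w$ after extending the admissible test class from $C_c^\infty(\Omega\times(0,T))$ to $L^2(0,T;H^1(\Omega))$ by density, using that the Neumann boundary condition $(\nabla w\cdot\nu)|_{\Gamma_T}=0$ makes the boundary contributions in the integration by parts vanish. This gives the elliptic energy identity $\|w\|_*^2 = \intQT hw$. Combining the two yields $\limsup \|w^\varepsilon\|_*^2 \le \|w\|_*^2$, which together with weak lower semicontinuity gives $\|w^\varepsilon\|_* \to \|w\|_*$; the Radon-Riesz property then closes the argument to produce \eqref{Lem:WeakToStrong:State2}.

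The main obstacle is the careful density/admissibility step for using $w^\varepsilon$ and $w$ as test functions in their respective weak formulations, together with the correct handling of the initial and terminal boundary terms $\frac{\varepsilon}{2}\|w_0\|^2_{L^2}$ and $\frac{\varepsilon}{2}\|w^\varepsilon(T)\|^2_{L^2}$. Because the time-derivative coefficient $\varepsilon$ degenerates, one must verify that the Lions identity is applied at fixed $\varepsilon>0$ (where the equation is genuinely parabolic) and only then pass to the limit; the non-negativity of the terminal term is precisely what allows discarding it uniformly in $\varepsilon$ to extract the one-sided $\limsup$-bound.
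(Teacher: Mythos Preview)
Your proposal is correct and follows essentially the same energy-equation/Radon--Riesz route as the paper: test \eqref{WeakForm:wEps} with $w^\varepsilon$, test \eqref{WeakForm:w} with $w$, and combine norm convergence with uniform convexity of $L^2(0,T;H^1(\Omega))$. The only cosmetic difference is that the paper shows the terminal term $\frac{\varepsilon}{2}\|w^\varepsilon(T)\|_{L^2}^2$ actually vanishes (using the uniform $L^\infty(0,T;L^2)$ bound on $w^\varepsilon$ that follows from the hypothesis on $h^\varepsilon$), whereas you discard it by non-negativity and compensate via weak lower semicontinuity; both variants are valid.
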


\begin{proof} By choosing $\varphi=w^\varepsilon$ in the weak formulation \eqref{WeakForm:wEps}, we have 
\begin{gather}
\frac{\varepsilon}{2} \int_\Omega (w^\varepsilon(T))^2 - \frac{\varepsilon}{2} \int_\Omega w_0^2 +  \intQT (\lambda |\nabla w^\varepsilon|^2 + \mu (w^\varepsilon)^2) = \intQT h^\varepsilon w^\varepsilon. 
\label{Lem:WeakToStrong:Proof1}
\end{gather}
On the other hand, choosing $\varphi = w$ in the weak formulation  \eqref{WeakForm:w} gives
\begin{gather}
\intQT \left( \lambda |\nabla w|^2 + \mu w^2 \right) = \intQT h w .
\label{Lem:WeakToStrong:Proof2}
\end{gather}
Since $w^\varepsilon$ is uniformly bounded in $L^\infty(0,T;L^2(\Omega))$ and $w_0\in L^2(\Omega)$, the first two terms in \eqref{Lem:WeakToStrong:Proof1} tend to zero as $\varepsilon \to 0$. Moreover, due to the weak and strong convergences of $h^\varepsilon$ and $w^\varepsilon$ in $L^2(Q_T)$, respectively, the right-hand side of \eqref{Lem:WeakToStrong:Proof1} tends to the corresponding one of \eqref{Lem:WeakToStrong:Proof2}. Therefore, we obtain 
\begin{align}
\intQT \left( \lambda |\nabla w^\varepsilon|^2 + \mu (w^\varepsilon)^2 \right)   \rightarrow \intQT \left( \lambda |\nabla w|^2 + \mu w^2 \right),  
\label{Lem:WeakToStrong:Proof3}
\end{align}
where we note that $\varphi \mapsto \sqrt{\lambda}\|\nabla\varphi\|_{L^2(\Omega)}+\sqrt{\mu}\|\varphi\|_{L^2(\Omega)}$ forms an equivalent norm with the usual $H^1$-norm.  Here, $H^1(\Omega)$ is uniformly convex since $L^2(\Omega)$ is uniformly convex (\cite[Proof of Theorem 4.10]{Hbrezis}) and $H^1(\Omega)$ is a closed subspace of $L^2(\Omega)$. Then, thanks to \cite[Theorem 2]{day1941some}, the space $L^2(0,T;H^1(\Omega))$ is uniformly convex. Therefore, taking into account Proposition 3.32 in \cite{Hbrezis}, the strong convergence of $w^\varepsilon$ to $w$ in $L^2(0,T;H^1(\Omega))$ follows from \eqref{Lem:WeakToStrong:Proof3}. 
\end{proof}

We now define weak solutions to the limiting system \eqref{System.UV.Limit}-\eqref{Condition.Boun-Init.Limit}, which will be useful in proving Theorem~\ref{Theo.FastSignalDiffLimit}.  
 
\begin{definition}
\label{Definition.WeakSolution}
Let $T>0$.  A vector of non-negative functions $(u_i,v_i)_{i=1,2,3}$ is called a weak solution to the system \eqref{System.UV.Limit}-\eqref{Condition.Boun-Init.Limit} on the interval $(0,T)$ if 
\begin{align*}
(u_i,v_i)_{i=1,2,3} \in L^2(0,T;H^1(\Omega))^6, \quad (\partial_t u_i)_{i=1,2,3} \in L^2(0,T;(H^1(\Omega))')^3,
\end{align*}
and, for all $\varphi\in C^\infty(\Omega \times[0,T))$, it satisfies 
\begin{equation}
\begin{gathered}
-\iint_{\Omega_T} u_i \partial_t \phi - \intO u_{i0}\phi(0) + \iint_{\Omega_T} \big( d_i \nabla u_i + \chi_i u_i  \nabla v_3 \big) \cdot \nabla \phi = \iint_{\Omega_T} f_i \phi ,  \quad  i=1,2,  \\
-\iint_{\Omega_T} u_3 \partial_t \phi - \intO u_{30}\phi(0) + \iint_{\Omega_T} \bigg( d_3 \nabla u_3  - \sum_{j=1}^2 \chi_{3j} u_3 \nabla v_j \bigg) \cdot \nabla \phi = \iint_{\Omega_T} f_3 \phi  , \\
\lambda_j \intQT \nabla v_j \cdot \nabla \phi + \mu_j \intQT v_j \phi = \intQT u_j \phi,   \quad j=1,2,3, 
\end{gathered}
\label{System.Limit.WeakForm}
\end{equation}
 where $f_j:=f_j(u_1,u_2,u_3)$. On the other hand, the definition of a classical solution can be defined similarly to Definition~\ref{Definition.Solution}.
\end{definition}

Under Assumption~\ref{Assumption.Initial1}, we have the following lemma, which is a direct combination of Theorems 3.1 and 4.1 in Amorim-B{\"u}rger-Ordo{\~n}ez-Villada \cite{amorim2023global}. 

\begin{lemma}  
\label{Lem.MainResultsOfABOV23}
If $(u_i,v_i)_{i=1,2,3}$ is a global weak solution to \eqref{System.UV.Limit}-\eqref{Condition.Boun-Init.Limit}, then it will be the unique globally classical solution to \eqref{System.UV.Limit}-\eqref{Condition.Boun-Init.Limit}. Moreover, for any $T>0$, 
\begin{align*}
\sum_{i=1}^3 \|u_i\|_{L^\infty(Q_T)} \le C_T .
\end{align*}
\end{lemma}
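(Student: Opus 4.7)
The plan is to reduce this to the framework already developed for Theorem \ref{Theo.GlobalClassicalSol}, exploiting that the limiting system \eqref{System.UV.Limit} has \emph{all three} chemical equations elliptic, which is strictly more favourable than the $\varepsilon$-system. First I would note that a global weak solution in the sense of Definition \ref{Definition.WeakSolution} is automatically non-negative (by the usual truncation test) and satisfies $u_i \in L^\infty(0,T;L^1(\Omega)) \cap L^2(Q_T)$ by mass balance combined with the Lotka--Volterra damping. The goal is then to upgrade such a weak solution to a classical one, prove uniqueness, and extract the uniform-in-$T$ $L^\infty$ bound.

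The core step is an $L^p$-energy/feedback scheme parallel to Lemma \ref{Lem.EnergyEstimate}, with a crucial simplification: since each $v_i$ solves the elliptic equation $-\lambda_i \Delta v_i + \mu_i v_i = \zeta_i u_i$, elliptic maximal regularity (Lemma \ref{Lem.MaximalRegularity}) gives a bound $\|v_i\|_{W^{2,p}(\Omega)} \le C\|u_i\|_{L^p(\Omega)}$ \emph{uniform-in-time} and with constant bounded as $p \to 1^+$. Testing the $u_i$-equation with $u_i^{p-1}$, integrating by parts, and invoking the pointwise identity $\Delta v_j = (\mu_j v_j - \zeta_j u_j)/\lambda_j$ turns the chemotaxis drift contributions into combinations of $\int u_i^p v_j$ and $\int u_i^p u_j$, both of which carry a prefactor of $(p-1)$. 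Choosing $p$ slightly above $1$ allows the Lotka--Volterra $L^{p+1}$-dissipation to absorb the bad cross term, giving $u_i \in L^{2+\theta}(Q_T)$ for some $\theta>0$.

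From this starting point, a bootstrap alternating between a Lemma \ref{Lem.Feedback12to3}-style feedback (prey $\to$ predator) and a Lemma \ref{Lem.Feedback3to12}-style feedback (predator $\to$ prey, where the gradient bound on $\nabla v_3$ is now obtained from elliptic rather than heat regularisation, hence immediate) raises integrability to every $L^p(Q_T)$. The smoothing effect of the Neumann heat semigroup then delivers $L^\infty(Q_T)$ control exactly as in the proof of Theorem \ref{Theo.GlobalClassicalSol}, and parabolic Schauder theory upgrades the solution to the regularity class required by the classical-solution counterpart of Definition \ref{Definition.WeakSolution}. Uniqueness follows by subtracting two solutions and applying Gronwall's inequality after testing with the differences: all cross terms are Lipschitz once $\|u_i\|_{L^\infty}$ and $\|\nabla v_i\|_{L^\infty}$ are in hand (the latter controlled by the former via elliptic regularity). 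The uniform-in-$T$ bound $\sum_i\|u_i\|_{L^\infty(Q_T)} \le C_T$ drops out of the same Moser iteration, as the logistic damping $-\alpha_i u_i^2$ and the predator self-regulation $-l u_3^2$ prevent the $L^p$-bounds from growing worse than linearly in $T$.

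The main obstacle is the initial $L^{2+\theta}$-improvement: one must verify that the elliptic maximal regularity constant appearing in the energy identity remains bounded uniformly for $p$ in a right neighbourhood of $1$, so that the coefficient of the leading $\int u_i^{p+1}$ term on the right-hand side is strictly smaller than the Lotka--Volterra dissipation coefficient. Once this first improvement is in place, the remainder of the bootstrap and the $L^\infty$-smoothing are routine adaptations of Lemmas \ref{Lem.EnergyEstimate}--\ref{Lem.LpEstimate}, which is why the conclusion of Lemma \ref{Lem.MainResultsOfABOV23} is indeed a direct combination of Theorems~3.1 and 4.1 in \cite{amorim2023global}.
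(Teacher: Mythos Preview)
The paper does not give its own proof of this lemma: it simply records, just before the statement, that the result is ``a direct combination of Theorems 3.1 and 4.1 in Amorim--B\"urger--Ordo\~nez--Villada \cite{amorim2023global}'', and no further argument appears. Your proposal, by contrast, supplies a self-contained proof sketch, adapting the strategy of Theorem \ref{Theo.GlobalClassicalSol} to the fully elliptic limiting system. The sketch is correct and is in fact the approach taken in the cited reference: the key simplification you exploit---substituting $\Delta v_j=(\mu_j v_j-\zeta_j u_j)/\lambda_j$ directly so that the chemotaxis terms reduce to $(p-1)$-weighted combinations of $\int u_i^p u_j$ and $\int u_i^p v_j$---is exactly what makes the elliptic case cleaner than Lemma \ref{Lem.EnergyEstimate}, and the $v_j$-contributions even carry a favourable sign once the no-flux integration by parts is performed. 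The bootstrap, smoothing, and Gr\"onwall uniqueness steps are then routine as you describe. So your proposal is correct and, relative to the paper, goes further by outlining the argument rather than just invoking \cite{amorim2023global}.
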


We are now ready to prove Theorem~\ref{Theo.FastSignalDiffLimit}. 

\begin{proof}[Proof of Theorem~\ref{Theo.FastSignalDiffLimit}] By Lemma~\ref{Lem.UVeps.Bootstrap}, the sequence $\{(u_1^\varepsilon,u_2^\varepsilon,u_3^\varepsilon):\varepsilon>0\}$ is bounded in $L^2(0,T;H^1(\Omega))$. Due to the equations 
\begin{align*}
\partial_t u_i^\varepsilon = d_i \Delta u_i^\varepsilon +\chi_i \nabla u_i^\varepsilon \cdot \nabla v_3^\varepsilon + \chi_i  u_i^\varepsilon \Delta v_3^\varepsilon + f_i(u_1^\varepsilon,u_2^\varepsilon,u_3^\varepsilon), \, i=1,2,
\end{align*}
and 
\begin{align*}
\partial_t u_3^\varepsilon = d_3 \Delta u_3^\varepsilon - \sum_{i=1}^2 \left( \chi_{3i} \nabla u_3^\varepsilon \cdot  \nabla v_i^\varepsilon + \chi_{3i} u_3^\varepsilon  \Delta v_i^\varepsilon \right) + f_3(u_1^\varepsilon,u_2^\varepsilon,u_3^\varepsilon),
\end{align*}
as well as the boundedness of the sequences $\{\nabla u_i^\varepsilon\}$, $\{(\nabla v_i^\varepsilon,\Delta v_i^\varepsilon)\}$ in $L^2(Q_T)^2$ and $L^q(Q_T)^3$, $1\le q<\infty$, respectively, we have the boundedness of $\{\partial_t u_i^\varepsilon\}$ in
\begin{align*}
L^2(0,T;(H^1(\Omega))') + L^{\frac{2q}{q+2}}(Q_T) \hookrightarrow L^{\frac{2q}{q+2}}(0,T;X)
\end{align*}
with $$X:=(H^1(\Omega))' + L^{\frac{2q}{q+2}}(\Omega).$$
Note that  $H^1(\Omega)$ is embedded compactly into $L^r(\Omega)$ for any large $1\le r<\infty$, and therefore into $X$. 
Then, by the Aubin-Lions lemma, $u_i^\varepsilon$ strongly converges to $u_i$ in $L^2(Q_T)$. With the boundedness of $\{(u_1^\varepsilon,u_2^\varepsilon,u_3^\varepsilon)\}$ in $L^\infty(Q_T)^3$, cf. Lemma~\ref{Lem.UVeps.Bootstrap},  
\begin{align}
(u_1^\varepsilon,u_2^\varepsilon,u_3^\varepsilon) \to (u_1,u_2,u_3) \quad \text{strongly in } L^q(Q_T),
\label{Theo.FastSignalDiffLimit.Proof1}
\end{align}
for any $1\le q<\infty$, up to a subsequence (not relabelled).

\medskip

We consider the sequence $\{(v_1^\varepsilon,v_2^\varepsilon,v_2^\varepsilon)\}$. Since it is bounded in $L^2(0,T;H^1(\Omega))^3$, cf. Lemma~\ref{Lem.UVeps.Bootstrap}, we have 
\begin{align}
(v_1^\varepsilon,v_2^\varepsilon,v_3^\varepsilon) \rightharpoonup (v_1,v_2,v_3) \quad \text{weakly in } L^2(0,T;H^1(\Omega))^3.
\label{Theo.FastSignalDiffLimit.Proof1b}
\end{align}
Now, for all $\phi \in C^\infty(\Omega \times [0,T))$ and $i=1,2$, it follows from the system \eqref{System.UV}-\eqref{Condition.Boun-Init.Epsilon} that 
\begin{align*}
-\iint_{\Omega_T} u_i^\varepsilon \partial_t \phi - \intO u_{i0}\phi(0) + \iint_{\Omega_T} \big( d_i \nabla u_i^\varepsilon - \chi_i u_i^\varepsilon \nabla v_3^\varepsilon \big) \cdot \nabla \phi &= \iint_{\Omega_T} f_i^\varepsilon \phi ,  \\
-\iint_{\Omega_T} u_3^\varepsilon \partial_t \phi - \intO u_{30}\phi(0) + \iint_{\Omega_T} \bigg( d_3 \nabla u_3^\varepsilon - \sum_{j=1}^2 \chi_{3j} u_3^\varepsilon  \nabla v_j^\varepsilon  \bigg) \cdot \nabla \phi &= \iint_{\Omega_T} f_3^\varepsilon \phi ,\\ 
\lambda_i \intQT \nabla v_i^\varepsilon \cdot \nabla \phi + \mu_i \intQT v_i^\varepsilon \phi &= \intQT u_i^\varepsilon \phi,  \\
- \varepsilon \intQT v_3^\varepsilon \partial_t \phi - \varepsilon \int_\Omega v_{30}\phi(0) + \lambda_3 \intQT \nabla v_3^\varepsilon \cdot \nabla \phi + \mu_3 \intQT v_3^\varepsilon \phi &= \intQT u_3^\varepsilon \phi,  
\end{align*}
where we write $f_i^\varepsilon$ instead of $f_i(u_1^\varepsilon,u_2^\varepsilon,u_3^\varepsilon)$ for short. Thus, by taking into account the convergence \eqref{Theo.FastSignalDiffLimit.Proof1}-\eqref{Theo.FastSignalDiffLimit.Proof1b}, we can pass the above weak formulation to see that $(u_i,v_i)_{i=1,2,3,}$ is the global weak solution to  \eqref{System.UV.Limit}-\eqref{Condition.Boun-Init.Limit}. Thanks to Lemma~\ref{Lem.MainResultsOfABOV23}, 
this solution also coincides with the classical one in the sense of Definition~\ref{Definition.WeakSolution}. Moreover, using Lemma~\ref{Lem:Weak-to-strong}, we claim that 
\begin{align}
(v_1^\varepsilon,v_2^\varepsilon,v_3^\varepsilon) \to (v_1,v_2,v_3) \quad \text{strongly in } L^2(0,T;H^1(\Omega))^3.
\label{Theo.FastSignalDiffLimit.Proof2}
\end{align}
By the estimate \eqref{Lem.UVeps.Bootstrap.State2}, there exists a subsequence (not relabelled) $\{v_i^\varepsilon\}$ such that 
\begin{align}
(v_1^\varepsilon,v_2^\varepsilon,v_3^\varepsilon) \to (v_1,v_2,v_3) \quad \text{strongly in } L^q(0,T;W^{1,q}(\Omega))^3.
\label{Theo.FastSignalDiffLimit.Proof2}
\end{align}     
Note the uniqueness of the limiting system \eqref{System.UV.Limit}-\eqref{Condition.Boun-Init.Limit} also guarantees that the convergences \eqref{Theo.FastSignalDiffLimit.Proof1}-\eqref{Theo.FastSignalDiffLimit.Proof2} hold for the whole sequence.   
  
  \medskip
  
Since $(u_1,u_2,u_3) \in L^q(Q_T)^3$ for any $1\le q <\infty$, we have $(u_1,u_2,u_3) \in L^\infty(Q_T)^3$ by using the smoothing effect of the Neumann heat semigroup, similarly to the proof of Theorem~\ref{Theo.GlobalClassicalSol}. Therefore, the claim $ (v_1,v_2,v_3) \in L^\infty(0,T;W^{2,\infty}(\Omega))^3$ can be proved similarly to Lemma~\ref{Lem.UVeps.Bootstrap}. Moreover, due to the Ehrling lemma, for $i=1,2,3$, 
\begin{align*}
\|\nabla v_i(t)\|_{L^\infty(\Omega)} \le C\|\Delta v_i(t)\|_{L^r(\Omega)} + C\|v_i(t)\|_{L^\infty(\Omega)}, 
\end{align*} 
for $2<r<\infty$. By the equations for $v_i$, we have  
$\|v_i(t)\|_{L^\infty(\Omega)} \le C \|u_i(t)\|_{L^\infty(\Omega)}$. Therefore, applying the elliptic maximal regularity in Lemma~\ref{Lem.MaximalRegularity} gives
\begin{align*}
\|\nabla v_i(t)\|_{L^\infty(\Omega)} \le C\|u_i(t)\|_{L^r(\Omega)} + C\|u_i(t)\|_{L^\infty(\Omega)}, 
\end{align*}
which consequently implies $\nabla v_i \in L^\infty(Q_T)^2$ since $u_i\in L^\infty(Q_T)$. Analogously, we can show that $\nabla u_i \in  L^\infty(Q_T)^2$, and so, 
$u_i \in W^{2,1}_q(Q_T)$   due to Lemma~\ref{Lem.HeatRegularisation}.
\end{proof}

\section{$L^\infty$-in-time convergence rates}\label{sec:5}

We study $L^\infty$-in-time convergence rates of the fast signal diffusion limit in Section~\ref{sec:4}. Since $v_3(0)\not=v_{30}$ in general, see Remark~\ref{Remark.Theo.FastSignalDiffLimit}, we need to analyse carefully the effect of the initial layer. Moreover, a suitable estimate for $\partial_t v_3$ is necessary. Although this derivative does not appear in the equation for $v_3$, it can be estimated using \eqref{Expression.v}.

\subsection{Energy estimate for the rate system}

To estimate the rate $(\widehat u^{\,\varepsilon}_1,\widehat u^{\,\varepsilon}_2,\widehat u^{\,\varepsilon}_3)$ in \eqref{System.Rate.UV}, we consider the energy function  
\begin{align*}
\mathcal E_{n}[\widehat u^{\,\varepsilon}](t):=\sum_{i=1}^3 \int_\Omega (\widehat{u}_i^{\,\varepsilon}(t))^{2n}  ,
\end{align*} 
$t\in(0,T)$, and $n\in \mathbb{N}$, $n\ge 1$. For $n=1$, we denote $\mathcal E[\widehat u^{\,\varepsilon}]:= \mathcal E_{1}[\widehat u^{\,\varepsilon}]$. 
After estimating $(\widehat u^{\,\varepsilon}_1,\widehat u^{\,\varepsilon}_2,\widehat u^{\,\varepsilon}_3)$, we can study the rate $(\widehat v^{\,\varepsilon}_1,\widehat v^{\,\varepsilon}_2,\widehat v^{\,\varepsilon}_3)$ via maximal regularity for elliptic equations in Lemma~\ref{Lem.MaximalRegularity} and for parabolic equations with slow evolution in Lemma~\ref{Lem.MRSlowEvolution}. Let us begin with an estimate for $\mathcal E_{n}[\widehat u^{\,\varepsilon}]$. 

\begin{lemma} 
\label{Lem.Rate.EnergyE}
For $t\in (0,T)$,  
\begin{align} 
\frac{d}{dt} \mathcal E_{n}[\widehat u^{\,\varepsilon}] \le - \frac{2n-1}{n} \sum_{i=1}^3 d_i \int_\Omega |\nabla (\widehat{u}_i^{\,\varepsilon})^{n}|^2  
+ C_{n,T}\, \mathcal E_{n}[\widehat u^{\,\varepsilon}]  + C_{n,T} \mathcal F[\widehat v^{\,\varepsilon}], 
\label{Lem.Rate.EnergyEn.State1}
\end{align}
where we denote 
\begin{align*}
\mathcal F[\widehat v^{\,\varepsilon}]:=\sum_{i=1}^3  \int_\Omega  |\nabla \widehat{v}_i^{\,\varepsilon}|^2. 
\end{align*}
\end{lemma}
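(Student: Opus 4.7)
The plan is to test each of the first three equations of the rate system \eqref{System.Rate.UV} with $2n(\widehat u_i^{\,\varepsilon})^{2n-1}$, integrate over $\Omega$, and sum over $i=1,2,3$. The time-derivative term immediately produces $\frac{d}{dt}\mathcal E_n[\widehat u^{\,\varepsilon}]$. For the diffusion term, one integration by parts together with the pointwise identity $n^2 (\widehat u_i^{\,\varepsilon})^{2n-2}|\nabla \widehat u_i^{\,\varepsilon}|^2 = |\nabla (\widehat u_i^{\,\varepsilon})^n|^2$ yields the raw dissipation
\begin{align*}
-2n(2n-1) d_i \int_\Omega (\widehat u_i^{\,\varepsilon})^{2n-2}|\nabla \widehat u_i^{\,\varepsilon}|^2 = -\frac{2(2n-1)}{n}\, d_i \int_\Omega |\nabla (\widehat u_i^{\,\varepsilon})^n|^2.
\end{align*}
Half of this coefficient will be kept on the left (giving the stated $\frac{2n-1}{n}$), and the other half will be used to absorb the chemotactic cross-terms via Young's inequality.

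The chemotactic terms split into two types. Terms of the form $\chi_i \nabla\cdot(\widehat u_i^{\,\varepsilon}\nabla v_3^\varepsilon)$ (and analogues involving $\nabla v_j^\varepsilon$ in the $u_3$ equation) reduce, after one integration by parts, to
\begin{align*}
-\frac{2(2n-1)\chi_i}{n}\int_\Omega (\widehat u_i^{\,\varepsilon})^n\, \nabla (\widehat u_i^{\,\varepsilon})^n \cdot \nabla v_3^\varepsilon,
\end{align*}
which by Young's inequality is bounded by $\eta\int_\Omega |\nabla (\widehat u_i^{\,\varepsilon})^n|^2 + C_{n,\eta}\|\nabla v_3^\varepsilon\|_{L^\infty(\Omega)}^2 \int_\Omega (\widehat u_i^{\,\varepsilon})^{2n}$. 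The uniform bound $\|\nabla v_3^\varepsilon\|_{L^\infty(Q_T)} \le C_T$ (and the analogous bound for $\nabla v_j^\varepsilon$, $j=1,2$, which is immediate from Theorem \ref{Theo.FastSignalDiffLimit}) then turns this into $\eta\int_\Omega |\nabla (\widehat u_i^{\,\varepsilon})^n|^2 + C_{n,T,\eta}\, \mathcal E_n[\widehat u^{\,\varepsilon}]$. The second type, $\chi_i \nabla\cdot(u_i \nabla \widehat v_3^{\,\varepsilon})$ (and its $u_3$-equation analogues), integrates by parts to
\begin{align*}
-\frac{2(2n-1)\chi_i}{n} \int_\Omega (\widehat u_i^{\,\varepsilon})^{n-1} u_i\, \nabla (\widehat u_i^{\,\varepsilon})^n \cdot \nabla \widehat v_3^{\,\varepsilon},
\end{align*}
and here the uniform $L^\infty(Q_T)$-bounds on both $u_i$ and $\widehat u_i^{\,\varepsilon}$ provided by Theorem \ref{Theo.FastSignalDiffLimit} permit the Young-type estimate $\eta \int_\Omega |\nabla (\widehat u_i^{\,\varepsilon})^n|^2 + C_{n,T,\eta} \int_\Omega |\nabla \widehat v_3^{\,\varepsilon}|^2$, producing exactly the $\mathcal F[\widehat v^{\,\varepsilon}]$ contribution.

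For the reaction residuals $\widehat f_i^{\,\varepsilon} = f_i(u_1^\varepsilon,u_2^\varepsilon,u_3^\varepsilon) - f_i(u_1,u_2,u_3)$, the uniform $L^\infty(Q_T)$-bounds on $u_i^\varepsilon$ and $u_i$ (Theorem \ref{Theo.FastSignalDiffLimit}) together with the smoothness of the Lotka--Volterra / Holling nonlinearities on bounded sets yield $|\widehat f_i^{\,\varepsilon}| \le C_T \sum_{j=1}^3 |\widehat u_j^{\,\varepsilon}|$; testing against $2n(\widehat u_i^{\,\varepsilon})^{2n-1}$ and applying Young's inequality gives a contribution bounded by $C_{n,T}\,\mathcal E_n[\widehat u^{\,\varepsilon}]$.

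Collecting everything, each of the absorbing parameters $\eta$ is chosen so that the sum of the $\eta$-terms from the two chemotactic contributions equals exactly $\frac{2n-1}{n} d_i \int_\Omega |\nabla (\widehat u_i^{\,\varepsilon})^n|^2$; this leaves precisely $-\frac{2n-1}{n} d_i \int_\Omega |\nabla (\widehat u_i^{\,\varepsilon})^n|^2$ on the right-hand side after cancellation with the diffusion dissipation, together with terms of the form $C_{n,T}\,\mathcal E_n[\widehat u^{\,\varepsilon}] + C_{n,T}\,\mathcal F[\widehat v^{\,\varepsilon}]$, which is \eqref{Lem.Rate.EnergyEn.State1}. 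The main obstacle is verifying the pointwise-in-$t$ uniform bound for $\|\nabla v_3^\varepsilon(t)\|_{L^\infty(\Omega)}$, since Theorem \ref{Theo.FastSignalDiffLimit} only provides $L^q_tW^{2,q}_x$-regularity for $v_3^\varepsilon$; this can be recovered by the Duhamel representation of $v_3^\varepsilon$ via the Neumann heat semigroup acting on the uniformly $L^\infty$-bounded source $u_3^\varepsilon$, using the time rescaling $\tau=\lambda_3 t/\varepsilon$ as in Lemma \ref{Lem.MRSlowEvolution} to confirm that the smoothing constants are independent of $\varepsilon$.
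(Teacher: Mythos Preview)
Your overall structure is exactly that of the paper: test with $2n(\widehat u_i^{\,\varepsilon})^{2n-1}$, integrate by parts, split into diffusion, chemotactic, and reaction contributions, and absorb half of the diffusive dissipation via Young's inequality. The treatment of the reaction terms and of the second type of chemotactic term (those involving $u_i\nabla\widehat v^{\,\varepsilon}_j$) is identical to the paper's.

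The one genuine difference is how you handle the first type of chemotactic term, specifically the factor $\nabla v_3^\varepsilon$. You want to use a uniform-in-$\varepsilon$ bound $\|\nabla v_3^\varepsilon\|_{L^\infty(Q_T)}\le C_T$, which, as you correctly note, is \emph{not} contained in Theorem~\ref{Theo.FastSignalDiffLimit}; you then propose to establish it separately via Duhamel and semigroup smoothing after the rescaling $\tau=\lambda_3 t/\varepsilon$. That argument does work (the integral $\int_0^\tau (1+(\tau-s)^{-1/2})e^{-(\mu_3/\lambda_3)(\tau-s)}\,ds$ is bounded independently of $\tau$, and $u_3^\varepsilon\in L^\infty(Q_T)$ uniformly), so your proof is correct. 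The paper, however, bypasses this extra step entirely: it writes $\nabla v_3^\varepsilon = \nabla\widehat v_3^{\,\varepsilon} + \nabla v_3$, uses the uniform $L^\infty(Q_T)$-bound on $\widehat u_i^{\,\varepsilon}$ to convert $\int_\Omega (\widehat u_i^{\,\varepsilon})^{2n}|\nabla\widehat v_3^{\,\varepsilon}|^2$ into a contribution to $\mathcal F[\widehat v^{\,\varepsilon}]$, and uses the bound $\|\nabla v_3\|_{L^\infty(Q_T)}\le C_T$ for the \emph{limiting} solution (available directly from \eqref{Theo.FastSignalDiffLimit.State3}) on the remaining piece. This splitting trick is cleaner because it uses only what Theorem~\ref{Theo.FastSignalDiffLimit} already provides; your route costs an auxiliary lemma but yields the stronger intermediate fact $\nabla v_3^\varepsilon\in L^\infty(Q_T)$ uniformly, which could be of independent use.
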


\begin{proof} The following computations are straightforward, 
\begin{align*}
  \frac{d}{dt} \mathcal E_{n}[\widehat u^{\,\varepsilon}](t)  
&  = - \frac{2(2n-1)}{n} \sum_{i=1}^3 d_i \int_\Omega |\nabla (\widehat{u}_i^{\,\varepsilon})^{n}|^2 +  2n \sum_{i=1}^3 \int_\Omega (\widehat{u}_i^{\,\varepsilon})^{2n-1} \widehat{f}^{\,\varepsilon}_i  \\
& + 2n \sum_{i=1}^2 \int_\Omega (\widehat{u}_i^{\,\varepsilon})^{2n-1}  \Big( \chi_i \di (\widehat{u}^{\,\varepsilon}_i \nabla v_3^\varepsilon) + \chi_i \di (u_i \nabla \widehat{v}^{\,\varepsilon}_3)  \Big) \\
& - 2n \sum_{i=1}^2 \int_\Omega (\widehat{u}_3^{\,\varepsilon})^{2n-1}  \Big( \chi_{3i} \di (\widehat{u}^{\,\varepsilon}_3 \nabla v_i^\varepsilon)   +  \chi_{3i} \di (u_3 ( \nabla \widehat{v}^{\,\varepsilon}_i) \Big) \\
& =:  - \frac{2(2n-1)}{n} \sum_{i=1}^3 d_i \int_\Omega |\nabla (\widehat{u}_i^{\,\varepsilon})^{n}|^2 + J_1 + J_2 +J_3.
\end{align*}
Due to the boundedness \eqref{Theo.FastSignalDiffLimit.State1}-\eqref{Theo.FastSignalDiffLimit.State3}, it is direct to see that 
\begin{align}
|\widehat{f}^{\,\varepsilon}_i|=|f_i(u_1^\varepsilon,u_2^\varepsilon,u_3^\varepsilon) - f_i(u_1,u_2,u_3)|\le C_T \sum_{i=1}^3 |\widehat{u}_i^{\,\varepsilon}|,
\label{Lem.Rate.EnergyE.Proof1}
\end{align}
which consequently shows   
\begin{align*}
J_1 =  2n \sum_{i=1}^3 \int_\Omega (\widehat{u}_i^{\,\varepsilon})^{2n-1} \widehat{f}^{\,\varepsilon}_i \le C_T \sum_{i=1}^3 \int_\Omega (\widehat{u}_i^{\,\varepsilon})^{2n} . 
\end{align*}
For the second term, we have  
\begin{align*}
J_2  & = - 2n(2n-1) \sum_{i=1}^2 \chi_i \int_\Omega \Big( (\widehat{u}_i^{\,\varepsilon})^{2n-1} \nabla \widehat{u}_i^{\,\varepsilon}\cdot \nabla v_3^\varepsilon  +  u_i (\widehat{u}_i^{\,\varepsilon})^{2n-2} \nabla \widehat{u}_i^{\,\varepsilon} \cdot \nabla \widehat{v}^{\,\varepsilon}_3 \Big) \\
& \le \frac{2n-1}{4n} \sum_{i=1}^2 d_i \int_\Omega |\nabla (\widehat{u}_i^{\,\varepsilon})^n|^2 + 4n(2n-1) \sum_{i=1}^2 \frac{\chi_i^2}{d_i} \int_\Omega (\widehat{u}_i^{\,\varepsilon})^{2n} |\nabla v_3^\varepsilon|^2 \\
& + \frac{2n-1}{4n} \sum_{i=1}^2 d_i \int_\Omega |\nabla (\widehat{u}_i^{\,\varepsilon})^n|^2 +  4n(2n-1) \sum_{i=1}^2  \frac{\chi_i^2}{d_i} \int_\Omega u_i^2 (\widehat{u}_i^{\,\varepsilon})^{2n-2} |\nabla \widehat{v}_3^{\,\varepsilon}|^2.
\end{align*}
Thanks to \eqref{Theo.FastSignalDiffLimit.State1}-\eqref{Theo.FastSignalDiffLimit.State3}, the triangle inequality yields 
\begin{align*}
\int_\Omega (\widehat{u}_i^{\,\varepsilon})^{2n} |\nabla v_3^\varepsilon|^2 & \le 2 \int_\Omega (\widehat{u}_i^{\,\varepsilon})^{2n} |\nabla \widehat{v}_3^{\,\varepsilon}|^2 + 2 \int_\Omega (\widehat{u}_i^{\,\varepsilon})^{2n} |\nabla v_3|^2 \\
& \le 2 \left( \sup_{\varepsilon>0} \|\widehat{u}_i^{\,\varepsilon}\|_{L^\infty(Q_T)}^{2n} \right) \int_\Omega |\nabla \widehat{v}_3^{\,\varepsilon}|^2 + 2 \|\nabla v_3\|_{L^\infty(Q_T)}^2 \int_\Omega (\widehat{u}_i^{\,\varepsilon})^{2n}  .
\end{align*}
Therefore,
\begin{align*}
J_2  & \le \frac{2n-1}{2n} \sum_{i=1}^2 d_i \int_\Omega |\nabla (\widehat{u}_i^{\,\varepsilon})^n|^2 + 4n(2n-1) \sum_{i=1}^2 \frac{\chi_i^2}{d_i} \|u_i^2(\widehat{u}_i^{\,\varepsilon})^{2n-2}\|_{L^\infty(Q_T)} \int_\Omega   |\nabla \widehat{v}_3^{\,\varepsilon}|^2\\
& + 8n(2n-1) \sum_{i=1}^2 \frac{\chi_i^2}{d_i} \left( \left( \sup_{\varepsilon>0} \|\widehat{u}_i^{\,\varepsilon}\|_{L^\infty(Q_T)}^{2n} \right) \int_\Omega |\nabla \widehat{v}_3^{\,\varepsilon}|^2  + \|\nabla v_3\|_{L^\infty(Q_T)}^2 \int_\Omega (\widehat{u}_i^{\,\varepsilon})^{2n} \right) .  
\end{align*}
Similarly to estimating the term $J_2$, one can show that 
\begin{align*}
J_3 & \le \frac{2n-1}{2n} d_3 \int_\Omega |\nabla (\widehat{u}_3^{\,\varepsilon})^{n}|^2 + \frac{4n(2n-1)}{d_3} \sum_{i=1}^2  \chi_{3i}^2 \|u_3^2(\widehat{u}_3^{\,\varepsilon})^{2n-2}\|_{L^\infty(Q_T)} \int_\Omega   |\nabla \widehat{v}_i^{\,\varepsilon}|^2 \\
& + \frac{4n(2n-1)}{d_3} \sum_{i=1}^2 \chi_{3i}^2 \left( \left( \sup_{\varepsilon>0} \|\widehat{u}_3^{\,\varepsilon}\|_{L^\infty(Q_T)}^{2n} \right) \int_\Omega |\nabla \widehat{v}_i^{\,\varepsilon}|^2  + \|\nabla v_i\|_{L^\infty(Q_T)}^2 \int_\Omega (\widehat{u}_3^{\,\varepsilon})^{2n} \right) .
\end{align*}
By combining the above estimates, using  \eqref{Theo.FastSignalDiffLimit.State1}-\eqref{Theo.FastSignalDiffLimit.State3}, 
we obtain the estimate \eqref{Lem.Rate.EnergyEn.State1}. 
\end{proof}

\subsection{$L^\infty$-in-time convergence rates} 
\label{Section.ConvergenceRate} 
Lemma~\ref{Lem.Rate.EnergyE} suggests a needed estimate for $\mathcal F[\widehat v^{\,\varepsilon}]$, which will be done in Lemmas~\ref{Lem.Rate.EstimateF}, where  the term including $\partial_t v_3$ will be  estimated in Lemma~\ref{Lem.Rate.TimeDeriOfv3}.

\begin{lemma} 
\label{Lem.Rate.EstimateF} For $t\in (0,T)$,  
\begin{equation}
\begin{aligned}
\mathcal F[\widehat v^{\,\varepsilon}] \le - \frac{\varepsilon}{2\lambda_3} \frac{d}{dt} \int_\Omega (\widehat{v}_3^{\,\varepsilon})^{2} -  \frac{\mu_3}{2\lambda_3} \int_\Omega (\widehat{v}_3^{\,\varepsilon})^{2} + C  \mathcal E [\widehat u^{\,\varepsilon}] + C\varepsilon^2 \int_\Omega |\partial_t v_3|^2  , 
\end{aligned}
\label{Lem.Rate.EstimateF.State1}
\end{equation}
and so,
\begin{align}
\mathcal E [\widehat u^{\,\varepsilon}]+  \sum_{i=1}^3 \intQT |\nabla \widehat{u}_i^{\,\varepsilon}|^2 + \intQT (\widehat{v}_3^{\,\varepsilon})^{2} \le  C_T  \left( \varepsilon^2 \intQT |\partial_t v_3|^2 + \varepsilon  \int_\Omega (\widehat{v}_3^{\,\varepsilon}(0))^{2} \right). 
\label{Lem.Rate.EstimateF.State2} 
\end{align}
\end{lemma}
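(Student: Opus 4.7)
The plan is to derive the pointwise-in-time estimate \eqref{Lem.Rate.EstimateF.State1} by testing each equation of \eqref{System.Rate.UV} for $\widehat v_i^{\,\varepsilon}$ against $\widehat v_i^{\,\varepsilon}$ itself, and then to deduce \eqref{Lem.Rate.EstimateF.State2} by coupling this with Lemma~\ref{Lem.Rate.EnergyE} (with $n=1$) and invoking Gr\"onwall on an appropriately augmented energy.

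First I would treat $i=1,2$. Multiplying the elliptic equation $-\lambda_i \Delta \widehat v_i^{\,\varepsilon} + \mu_i \widehat v_i^{\,\varepsilon} = \widehat u_i^{\,\varepsilon}$ by $\widehat v_i^{\,\varepsilon}$, integrating over $\Omega$ and using the homogeneous Neumann boundary condition yields
\begin{equation*}
\lambda_i \int_\Omega |\nabla \widehat v_i^{\,\varepsilon}|^2 + \mu_i \int_\Omega (\widehat v_i^{\,\varepsilon})^2 = \int_\Omega \widehat u_i^{\,\varepsilon}\, \widehat v_i^{\,\varepsilon} \le \frac{1}{2\mu_i}\int_\Omega (\widehat u_i^{\,\varepsilon})^2 + \frac{\mu_i}{2}\int_\Omega (\widehat v_i^{\,\varepsilon})^2,
\end{equation*}
so $\int_\Omega |\nabla \widehat v_i^{\,\varepsilon}|^2 \le C\int_\Omega (\widehat u_i^{\,\varepsilon})^2$ for $i=1,2$. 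For $i=3$, I multiply the parabolic equation by $\widehat v_3^{\,\varepsilon}$; the time term produces $\frac{\varepsilon}{2}\frac{d}{dt}\int_\Omega (\widehat v_3^{\,\varepsilon})^2$, and a double application of Young's inequality to $\int_\Omega \widehat u_3^{\,\varepsilon}\widehat v_3^{\,\varepsilon}$ and to $-\varepsilon\int_\Omega \partial_t v_3\cdot \widehat v_3^{\,\varepsilon}$ gives
\begin{equation*}
\frac{\varepsilon}{2}\frac{d}{dt}\int_\Omega (\widehat v_3^{\,\varepsilon})^2 + \lambda_3\int_\Omega |\nabla \widehat v_3^{\,\varepsilon}|^2 + \frac{\mu_3}{2}\int_\Omega (\widehat v_3^{\,\varepsilon})^2 \le C\int_\Omega (\widehat u_3^{\,\varepsilon})^2 + C\varepsilon^2\int_\Omega |\partial_t v_3|^2.
\end{equation*}
Dividing by $\lambda_3$ and summing with the $i=1,2$ bounds produces precisely \eqref{Lem.Rate.EstimateF.State1}.

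Next, to obtain \eqref{Lem.Rate.EstimateF.State2}, I specialise Lemma~\ref{Lem.Rate.EnergyE} to $n=1$ and substitute \eqref{Lem.Rate.EstimateF.State1} in place of $\mathcal F[\widehat v^{\,\varepsilon}]$. Introducing the combined functional
\begin{equation*}
G^{\,\varepsilon}(t) := \mathcal E[\widehat u^{\,\varepsilon}](t) + \frac{C\,\varepsilon}{2\lambda_3}\int_\Omega (\widehat v_3^{\,\varepsilon}(t))^2,
\end{equation*}
the terms $-\frac{C\varepsilon}{2\lambda_3}\frac{d}{dt}\int_\Omega(\widehat v_3^{\,\varepsilon})^2$ are absorbed into $dG^{\,\varepsilon}/dt$, the good dissipative term $-\frac{C\mu_3}{2\lambda_3}\int_\Omega (\widehat v_3^{\,\varepsilon})^2$ is kept on the left, and one arrives at a differential inequality of the form
\begin{equation*}
\frac{d G^{\,\varepsilon}}{dt} + \sum_{i=1}^3 d_i \int_\Omega |\nabla \widehat u_i^{\,\varepsilon}|^2 + \frac{C\mu_3}{2\lambda_3}\int_\Omega (\widehat v_3^{\,\varepsilon})^2 \le C_T\, G^{\,\varepsilon}(t) + C_T\,\varepsilon^2 \int_\Omega |\partial_t v_3|^2.
\end{equation*}
Since $\widehat u_i^{\,\varepsilon}(0)=0$, one has $G^{\,\varepsilon}(0)= \frac{C\varepsilon}{2\lambda_3}\int_\Omega (\widehat v_3^{\,\varepsilon}(0))^2$; Gr\"onwall's lemma on $G^{\,\varepsilon}$ then controls $\mathcal E[\widehat u^{\,\varepsilon}]$ by the desired right-hand side, and a subsequent time integration of the differential inequality picks up the $L^2(0,T;H^1)$-bound on the $\widehat u_i^{\,\varepsilon}$ and the $L^2(Q_T)$-bound on $\widehat v_3^{\,\varepsilon}$.

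The step requiring the most care is the handling of the slow evolution $\varepsilon\partial_t \widehat v_3^{\,\varepsilon}$: the factor $\varepsilon$ both weakens the parabolic dissipation and imposes that the initial-layer contribution $\int_\Omega (\widehat v_3^{\,\varepsilon}(0))^2$ enter only linearly in $\varepsilon$ in the final bound, which is reflected in the structure of \eqref{Lem.Rate.EstimateF.State2}. The other delicate point is that the forcing $-\varepsilon\partial_t v_3$ on the right of the rate equation must be estimated in $L^2(Q_T)$, so a separate bound on $\partial_t v_3$ (to be supplied by Lemma~\ref{Lem.Rate.TimeDeriOfv3}) is needed to close the argument, but at the present stage it is enough to keep this quantity on the right-hand side.
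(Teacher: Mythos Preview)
Your proposal is correct and follows essentially the same approach as the paper: test the $\widehat v_i^{\,\varepsilon}$-equations against $\widehat v_i^{\,\varepsilon}$ and apply Young's inequality to obtain \eqref{Lem.Rate.EstimateF.State1}, then combine with Lemma~\ref{Lem.Rate.EnergyE} at $n=1$ via the augmented functional $G^{\,\varepsilon}=\mathcal E[\widehat u^{\,\varepsilon}]+C\varepsilon\int_\Omega(\widehat v_3^{\,\varepsilon})^2$ and apply Gr\"onwall. The paper proceeds identically, with the same choice of combined energy and the same observation that $\mathcal E[\widehat u^{\,\varepsilon}](0)=0$.
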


\begin{proof}  
According to the equations
for $\widehat{v}_i^{\,\varepsilon}$, $i=1,2$, we see that  
\begin{align*}
\sum_{i=1}^2  \int_\Omega     |\nabla \widehat{v}_i^{\,\varepsilon}|^2   & =  \sum_{i=1}^2  \int_\Omega \left( \frac{1}{\lambda_i} \widehat{u}_i^{\,\varepsilon} \widehat{v}_i^{\,\varepsilon}  - \frac{\mu_i}{\lambda_i}  (\widehat{v}_i^{\,\varepsilon})^{2} \right)  \le \sum_{i=1}^2 \frac{1}{4\lambda_i\mu_i}  \int_\Omega (\widehat{u}_i^{\,\varepsilon})^{2} . 
\end{align*}
On the other hand, by the equation for $\widehat{v}_3^{\,\varepsilon}$,
\begin{align*}
 \int_\Omega |\nabla \widehat{v}_3^{\,\varepsilon}|^2 &  = - \frac{\varepsilon}{2\lambda_3} \frac{d}{dt} \int_\Omega (\widehat{v}_3^{\,\varepsilon})^{2} -  \frac{\mu_3}{\lambda_3} \int_\Omega (\widehat{v}_3^{\,\varepsilon})^{2} + \frac{1}{\lambda_3} \int_\Omega \widehat{u}_3^{\,\varepsilon}  \widehat{v}_3^{\,\varepsilon}   - \frac{\varepsilon}{\lambda_3} \int_\Omega \widehat{v}_3^{\,\varepsilon}  \partial_t v_3  \\
 & \le - \frac{\varepsilon}{2\lambda_3} \frac{d}{dt} \int_\Omega (\widehat{v}_3^{\,\varepsilon})^{2} -  \frac{\mu_3}{2\lambda_3} \int_\Omega (\widehat{v}_3^{\,\varepsilon})^{2} + \frac{1}{\lambda_3\mu_3} \int_\Omega (\widehat{u}_3^{\,\varepsilon})^2 + \frac{\varepsilon^2}{\lambda_3\mu_3} \int_\Omega |\partial_t v_3|^2 .
\end{align*}
Then, the estimate \eqref{Lem.Rate.EstimateF.State1}
 is obtained by taking the above estimates together. Let us prove \eqref{Lem.Rate.EstimateF.State2}. By taking $n=1$ in Lemma ~\ref{Lem.Rate.EnergyE} and \eqref{Lem.Rate.EstimateF.State1}, we have 
\begin{align}
\begin{gathered}
\frac{d}{dt} \mathcal E[\widehat u^{\,\varepsilon}] \le -  \sum_{i=1}^3 d_i \int_\Omega |\nabla \widehat{u}_i^{\,\varepsilon}|^2  
+ C_{T} \mathcal E[\widehat u^{\,\varepsilon}]    + C_T \mathcal F[\widehat v^{\,\varepsilon}] , 
\\
\mathcal F[\widehat v^{\,\varepsilon}] \le - \frac{\varepsilon}{2\lambda_3} \frac{d}{dt} \int_\Omega (\widehat{v}_3^{\,\varepsilon})^{2} -  \frac{\mu_3}{2\lambda_3} \int_\Omega (\widehat{v}_3^{\,\varepsilon})^{2} + C  \mathcal E [\widehat u^{\,\varepsilon}] + C\varepsilon^2 \int_\Omega |\partial_t v_3|^2 . 
\end{gathered}
\label{Lem.Rate.EnergyEF1.Proof1}
\end{align}
Therefore, we obtain
\begin{align*}
\begin{gathered}
\frac{d}{dt}\left(\mathcal E [\widehat u^{\,\varepsilon}]+C_T\varepsilon \int_\Omega (\widehat{v}_3^{\,\varepsilon})^{2} \right) + \sum_{i=1}^3 \int_\Omega |\nabla \widehat{u}_i^{\,\varepsilon}|^2 + C_T \int_\Omega (\widehat{v}_3^{\,\varepsilon})^{2} \\
\le  C_T \varepsilon^2 \int_\Omega |\partial_t v_3|^2 + C_T \left(\mathcal E [\widehat u^{\,\varepsilon}]+C_T\varepsilon \int_\Omega (\widehat{v}_3^{\,\varepsilon})^{2} \right).
\end{gathered}
\end{align*}
Due to the Gr\"onwall inequality, for $t\in (0,T)$,
\begin{align*}
\mathcal E [\widehat u^{\,\varepsilon}]+  \sum_{i=1}^3 \intQT |\nabla \widehat{u}_i^{\,\varepsilon}|^2 + \intQT (\widehat{v}_3^{\,\varepsilon})^{2} \le  C_T \varepsilon^2 \intQT |\partial_t v_3|^2 + C_T \varepsilon \int_\Omega (\widehat{v}_3^{\,\varepsilon}(0))^{2},
\end{align*} 
where we note that $\mathcal E [\widehat u^{\,\varepsilon}](0)=0$.
\end{proof}

By Lemma~\ref{Lem.Rate.EstimateF}, we need to estimate the time derivative $\partial_t v_3$ in $L^2(Q_T)$, which, however, can be shown to belong to $L^q(Q_T)$ for any $1<q<\infty$ in the lemma below.

\begin{lemma} \label{Lem.Rate.TimeDeriOfv3}
For any $1\le q<\infty$,   
\begin{align*}
\|\partial_t v_3\|_{L^q(Q_T)} + \|\partial_t v_3\|_{L^\infty(0,T;L^2(\Omega))} \le C_T.
\end{align*} 
\end{lemma}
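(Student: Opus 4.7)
The plan is to exploit the fact that, although $\partial_t v_3$ never appears in the limiting system, the elliptic identity $-\lambda_3 \Delta v_3 + \mu_3 v_3 = \zeta_3 u_3$ holds for every $t$, so one can formally differentiate in time to obtain
\begin{equation*}
-\lambda_3 \Delta(\partial_t v_3) + \mu_3 (\partial_t v_3) = \zeta_3 \, \partial_t u_3 \quad \text{in } \Omega, \qquad \nabla(\partial_t v_3)\cdot\nu = 0 \text{ on } \Gamma,
\end{equation*}
for a.e.\ $t\in(0,T)$. The strategy is then to (i) establish this identity rigorously, (ii) control $\partial_t u_3$ in $L^q(Q_T)$ via the regularity of the limiting solution, and (iii) invoke elliptic maximal regularity (Lemma \ref{Lem.MaximalRegularity}) to transfer the bound to $\partial_t v_3$.

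For step (i), the operator $A:=\mu_3 I - \lambda_3 \Delta_N$ (with Neumann boundary condition) is an isomorphism from $W^{2,q}(\Omega)$ onto $L^q(\Omega)$ for any $1<q<\infty$, so $v_3(t)=\zeta_3 A^{-1}u_3(t)$ pointwise in $t$. Since $u_3\in W^{2,1}_q(Q_T)$ by \eqref{Theo.FastSignalDiffLimit.State3}, the map $t\mapsto u_3(t)$ admits a weak time derivative $\partial_t u_3 \in L^q(0,T;L^q(\Omega))$; the boundedness of $A^{-1}$ then yields $\partial_t v_3 = \zeta_3 A^{-1}\partial_t u_3$ in the same sense.

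For step (ii), I read off $\partial_t u_3$ from the third PDE in \eqref{System.UV.Limit},
\begin{equation*}
\partial_t u_3 = d_3 \Delta u_3 - \sum_{i=1}^2 \chi_{3i}\bigl(\nabla u_3 \cdot \nabla v_i + u_3\, \Delta v_i\bigr) + f_3(u_1,u_2,u_3).
\end{equation*}
By Theorem \ref{Theo.FastSignalDiffLimit}, one has $u_i\in L^\infty(0,T;W^{1,\infty}(\Omega))\cap W^{2,1}_q(Q_T)$ and $v_i\in L^\infty(0,T;W^{2,\infty}(\Omega))$ for $i=1,2,3$. Therefore $\Delta u_3\in L^q(Q_T)$, the products $\nabla u_3\cdot \nabla v_i$ and $u_3 \Delta v_i$ lie in $L^\infty(Q_T)$, and $f_3(u_1,u_2,u_3)$ is a polynomial in bounded functions, hence in $L^\infty(Q_T)$. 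Combining yields $\|\partial_t u_3\|_{L^q(Q_T)}\le C_T$.

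For step (iii), applying Lemma \ref{Lem.MaximalRegularity} pointwise in $t$ to the elliptic problem for $\partial_t v_3$ gives $\|\partial_t v_3(t)\|_{W^{2,q}(\Omega)}\le C\|\partial_t u_3(t)\|_{L^q(\Omega)}$, and integration in $t$ produces
\begin{equation*}
\|\partial_t v_3\|_{L^q(0,T;W^{2,q}(\Omega))} \le C\,\|\partial_t u_3\|_{L^q(Q_T)} \le C_T,
\end{equation*}
which is strictly stronger than the claim $\|\partial_t v_3\|_{L^q(Q_T)}\le C_T$. There is no real obstacle here: the only mild subtlety is the justification of step (i), and that is handled by the boundedness of $A^{-1}$ together with the time-regularity $\partial_t u_3\in L^q(Q_T)$ from \eqref{Theo.FastSignalDiffLimit.State3}.
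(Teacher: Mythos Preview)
Your proof is correct and follows essentially the same strategy as the paper: both recognize that $v_3(t)=A^{-1}(\zeta_3 u_3(t))$ with $A=\mu_3 I-\lambda_3\Delta_N$, differentiate in time, and bound $\partial_t u_3\in L^q(Q_T)$ from the equation for $u_3$ together with the regularity \eqref{Theo.FastSignalDiffLimit.State3}. The only cosmetic difference is that the paper writes $A^{-1}$ via the semigroup integral $\int_0^\infty e^{s(\lambda_3\Delta-\mu_3 I)}\,ds$ and uses the contraction estimate \eqref{HeatSemigroup.Contraction}, whereas you invoke Lemma~\ref{Lem.MaximalRegularity}; your version even yields the slightly stronger $L^q(0,T;W^{2,q}(\Omega))$ bound.
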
 

\begin{proof} Thanks to \eqref{Expression.v}, the component $v_3$ can be expressed in terms of $u_3$ as   
\begin{align}
v_3(t) & = \left( \int_0^\infty e^{s(\lambda_3 \Delta - \mu_3 I)} \, ds \right) u_3(t).
\label{Lem.Rate.TimeDeriOfv3:Proof1}
\end{align}
Due to the regularity \eqref{Theo.FastSignalDiffLimit.State3} and the equation for $u_3$, we have $\partial_t u_3 \in L^q(Q_T)$ for any $1\le q<\infty$. Therefore, by using the estimate \eqref{HeatSemigroup.Contraction},
\begin{align*}
\|\partial_t v_3(t)\|_{L^q(\Omega)} & \le \left( \int_0^\infty e^{-\mu_3 s} \, ds \right) \|\partial_t u_3(t)\|_{L^q(\Omega)},
\end{align*}
which claims the conclusion by taking the $L^q(0,T)$-norms from the latter estimate. Moreover, under the regularity \eqref{Theo.FastSignalDiffLimit.State3}, one can show   $\partial_t u_3 \in L^\infty(0,T;L^2(\Omega))$ due to the smoothing effect of the heat semigroup, which yields the same boundedness for $\partial_t v_3$.
\end{proof}

We are now ready to prove Theorem~\ref{Theo.ConvergenceRate}.  
 
\begin{proof}[\underline{Proof of Theorem~\ref{Theo.ConvergenceRate}: Part a}] 
Thanks to Lemma~\ref{Lem.Rate.TimeDeriOfv3}, we have $\partial_t v_3 \in L^2(Q_T)$. Therefore, due to Lemma~\ref{Lem.Rate.EstimateF},   
\begin{align}
\mathcal E [\widehat u^{\,\varepsilon}]+  \sum_{i=1}^3 \intQT |\nabla \widehat{u}_i^{\,\varepsilon}|^2 + \intQT (\widehat{v}_3^{\,\varepsilon})^{2} \le  C_T  \left( \varepsilon^2 + \varepsilon  \int_\Omega (\widehat{v}_3^{\,\varepsilon}(0))^{2} \right) .
\label{Theo.ConvergenceRate.Proof1}
\end{align}
By Remark~\ref{Remark.Theo.FastSignalDiffLimit} and the estimate \eqref{HeatSemigroup.Contraction}, 
\begin{align}
\begin{aligned}
\|\widehat{v}_3^{\,\varepsilon}(0)\|_{L^2(\Omega)} &= \left\| - \int_0^\infty e^{s(\lambda_3 \Delta - \mu_3 I)} ( \lambda_3 \Delta v_{30} - \mu_3 v_{30} + u_{30}) \, ds \right\|_{L^2(\Omega)} \\ 
&\le \left( \int_0^\infty e^{- \mu_3s} \, ds \right) \| \lambda_3 \Delta v_{30} - \mu_3 v_{30} + u_{30}\|_{L^2(\Omega)} \\
& = \frac{1}{\mu_3} \| \lambda_3 \Delta v_{30} - \mu_3 v_{30} + u_{30} \|_{L^2(\Omega)}  = \frac{1}{\mu_3} \varepsilon_{\mathsf{in}}.
\end{aligned}
\label{Theo.ConvergenceRate.Proof1b}
\end{align}
Therefore, it follows from \eqref{Theo.ConvergenceRate.Proof1} that 
\begin{align*}
\mathcal E [\widehat u^{\,\varepsilon}]+  \sum_{i=1}^3 \intQT |\nabla \widehat{u}_i^{\,\varepsilon}|^2 + \intQT (\widehat{v}_3^{\,\varepsilon})^{2} \le  C_T  \left(   \varepsilon_{\mathsf{in}}^2 \varepsilon + \varepsilon^2 \right),
\end{align*}
which accordingly yields 
\begin{align}
\sum_{n=1}^3 \Big( \|\widehat u^{\,\varepsilon}_i\|_{L^\infty(0,T;L^2(\Omega))} + \|\widehat u^{\,\varepsilon}_i\|_{L^2(0,T;H^1(\Omega))} \Big) \le C_T \left( \varepsilon_{\mathsf{in}} \sqrt{\varepsilon} + \varepsilon \right).
\label{Theo.ConvergenceRate.a.Proof2}
\end{align}
By applying Lemma~\ref{Lem.MaximalRegularity} to the equations for $\widehat v^{\,\varepsilon}_i$, $i=1,2$,  
\begin{align}
\|\widehat v^{\,\varepsilon}_i\|_{L^\infty(0,T;H^2(\Omega))} \le C \|\widehat u^{\,\varepsilon}_i\|_{L^\infty(0,T;L^2(\Omega))} \le C_T \left(\varepsilon_{\mathsf{in}} \sqrt{\varepsilon} + \varepsilon  \right) .
\label{Theo.ConvergenceRate.a.Proof3}
\end{align}
We now proceed to estimate $\widehat v^{\,\varepsilon}_3$ in $L^\infty(0,T;H^1(\Omega))$, where it is only necessary to estimate $\nabla \widehat v^{\,\varepsilon}_3$ in $L^\infty(0,T;L^2(\Omega))$. We test the equation for $\widehat v^{\,\varepsilon}_3$ by $-\Delta \widehat v^{\,\varepsilon}_3$, which gives 
\begin{align*}
 \varepsilon \frac{d}{dt} \int_\Omega |\nabla \widehat v^{\,\varepsilon}_3|^2 + \lambda_3 \int_\Omega |\Delta  \widehat v^{\,\varepsilon}_3|^2 + 2\mu_3 \int_\Omega |\nabla \widehat v^{\,\varepsilon}_3|^2    \le \frac{1}{\lambda_3} \int_\Omega (\widehat u^{\,\varepsilon}_3-\varepsilon  \partial_t v_3)^2 .
\end{align*}
By using \eqref{Theo.ConvergenceRate.a.Proof2} and Lemma~\ref{Lem.Rate.TimeDeriOfv3}, the latter right-hand side is of order $O(\varepsilon_{\mathsf{in}}^2 \varepsilon + \varepsilon^2)$. 
Hence,  
\begin{align}
\varepsilon \frac{d}{dt} \int_\Omega |\nabla \widehat v^{\,\varepsilon}_3|^2 + \lambda_3 \int_\Omega |\Delta  \widehat v^{\,\varepsilon}_3|^2 + 2\mu_3 \int_\Omega |\nabla \widehat v^{\,\varepsilon}_3|^2    \le C_T (\varepsilon_{\mathsf{in}}^2 \varepsilon + \varepsilon^2) ,
\label{Theo.ConvergenceRate.a.Proof4}
\end{align}
so that, by the comparison principle, 
\begin{equation*}
\begin{aligned} 
\int_\Omega |\nabla \widehat v^{\,\varepsilon}_3|^2 & \le \exp\left(-\frac{2\mu_3}{\varepsilon}t\right) \int_\Omega |\nabla \widehat v^{\,\varepsilon}_3(0)|^2 + C_T(\varepsilon_{\mathsf{in}}^2+\varepsilon) \int_0^t \exp\left(-\frac{2\mu_3}{\varepsilon}(t-s)\right) \\
& \le \left\| \nabla \int_0^\infty e^{s(\lambda_3 \Delta - \mu_3 I)} ( \lambda_3 \Delta v_{30} - \mu_3 v_{30} + u_{30}) \, ds \right\|_{L^2(\Omega)}^2 + C_T(\varepsilon_{\mathsf{in}}^2 \varepsilon + \varepsilon^2) \\
& \le C \left( \int_0^\infty e^{-\omega s} s^{-\frac{1}{2}} \right)^2 \left\| \lambda_3 \Delta v_{30} - \mu_3 v_{30} + u_{30} \right\|_{L^2(\Omega)}^2 + C_T(\varepsilon_{\mathsf{in}}^2 \varepsilon + \varepsilon^2) ,    
\end{aligned}
\end{equation*}
where the expression \eqref{Expression.InitialLayer} has been exploited to estimate $\nabla \widehat v^{\,\varepsilon}_3(0)$.   Consequently,  
\begin{align*}
\| \widehat v^{\,\varepsilon}_3\|_{L^\infty(0,T;H^1(\Omega))}  
&\le C_T \sqrt{ \varepsilon_{\mathsf{in}}^2 + \varepsilon_{\mathsf{in}}^2 \varepsilon + \varepsilon^2 } = O(\varepsilon_{\mathsf{in}} + \varepsilon ).    
\end{align*} 
The estimate for $\widehat v^{\,\varepsilon}_3$ in $L^2(0,T;H^2(\Omega))$ is shown by integrating \eqref{Theo.ConvergenceRate.a.Proof4}. 

\medskip

\noindent \underline{\it Part b.}  Let us take $n\in \mathbb{N}$ such that $2(n-1)<q<2n$. We first apply Lemma~\ref{Lem.Rate.EnergyE} for the index $k\in\{n-1;n\}$, which gives after integrating in time  
\begin{align*}
\mathcal E_{k}[\widehat u^{\,\varepsilon}](t) &\le   C_{k,T} \int_0^t \mathcal E_{k}[\widehat u^{\,\varepsilon}]  + C_{k,T} \int_0^t \mathcal F[\widehat v^{\,\varepsilon}] ,
\end{align*}
where we recall from Remark~\ref{Remark.Theo.FastSignalDiffLimit} that  $\mathcal E[\widehat u^{\,\varepsilon}](0) =0$. Integrating  the estimate \eqref{Lem.Rate.EstimateF.State1} over time gives, where we note that $\partial_t v_3 \in L^2(Q_T)$ due to Lemma~\ref{Lem.Rate.TimeDeriOfv3}, 
\begin{align*} 
\int_0^t \mathcal F[\widehat v^{\,\varepsilon}] & \le \frac{\varepsilon}{2\lambda_3} \int_\Omega (\widehat{v}_3^{\,\varepsilon}(0))^{2}   + C  \int_0^t \mathcal E [\widehat u^{\,\varepsilon}] + C\varepsilon^2  \\
&\le \frac{\varepsilon}{2\lambda_3} \frac{1}{\mu_3^2} \varepsilon_{\mathsf{in}}^2 + C_T (\varepsilon_{\mathsf{in}}^2 \varepsilon + \varepsilon^2)  .
\end{align*}
Here, the right-hand side is of order $O(\varepsilon_{\mathsf{in}}^2 \varepsilon + \varepsilon^2)$.  Therefore,  
\begin{align*}
\mathcal E_{k}[\widehat u^{\,\varepsilon}](t) &\le C_{k,T} (\varepsilon^2 + \varepsilon_{\mathsf{in}}^2 \varepsilon) +   C_{k,T} \int_0^t \mathcal E_{k}[\widehat u^{\,\varepsilon}] \le C_{k,T} (\varepsilon_{\mathsf{in}}^2 \varepsilon + \varepsilon^2) , 
\end{align*}   
for $t\in (0,T)$, due to the Gr\"onwall inequality. By the definition of $\mathcal E_{k}[\widehat u^{\,\varepsilon}]$ and an interpolation between $ L^{2(n-1)}(\Omega) $ and $L^{2n}(\Omega)$, we have 
 $$\sum_{i=1}^3 \|\widehat u^{\,\varepsilon}_i\|_{L^\infty(0,T; L^{q}(\Omega))} \le C_{q,T} \, \varepsilon^{\frac{1}{q}} (\varepsilon_{\mathsf{in}}^{\frac{2}{q}} + \varepsilon^{\frac{1}{q}}). $$
Applying Lemma~\ref{Lem.MaximalRegularity},    
\begin{align}
\|\widehat v^{\,\varepsilon}_i\|_{L^\infty(0,T;W^{2,q}(\Omega))} \le C  \|\widehat u^{\,\varepsilon}_i\|_{L^\infty(0,T;L^{q}(\Omega))} \le C_{q,T} \, \varepsilon^{\frac{1}{q}} (\varepsilon_{\mathsf{in}}^{\frac{2}{q}} + \varepsilon^{\frac{1}{q}}) 
\label{Theo.ConvergenceRate.b.Proof2} 
\end{align}
for $i=1,2$, i.e., we obtain \eqref{Theo.ConvergenceRate.b.State1}.  Now, we proceed to estimate  $\widehat v^{\,\varepsilon}_3$  in $L^q(0,T;W^{2,q}(\Omega))$. By applying Lemma~\ref{Lem.MRSlowEvolution}, 
\begin{align*}
\begin{aligned}
\|\Delta \widehat v^{\,\varepsilon}_3 \|_{L^q(Q_T)}  &\le \bigg( \frac{\varepsilon}{q\mu_3} \bigg)^{\frac{1}{q}} \|  \Delta \widehat v^{\,\varepsilon}_3(0)\|_{L^q(\Omega)} +  \frac{C_{1,\mu_3/\lambda_3,q}^{\mathsf{PM}}}{\lambda_3} \|\widehat{u}^{\,\varepsilon}_3 - \eps \partial_t v_3 \|_{L^q(Q_T)} \\
&\le C_{q,\lambda_3}  \varepsilon^{\frac{1}{q}} \left\| - \Delta \int_0^\infty e^{s(\lambda_3 \Delta - \mu_3 I)} ( \lambda_3 \Delta v_{30} - \mu_3 v_{30} + u_{30}) \, ds \right\|_{L^q(\Omega)}  \\
& + \frac{C_{1,\mu_3/\lambda_3,q}^{\mathsf{PM}}}{\lambda_3} \left(  C_{q,T} \, \varepsilon^{\frac{1}{q}} (\varepsilon_{\mathsf{in}}^{\frac{2}{q}} + \varepsilon^{\frac{1}{q}})  + \eps\| \partial_t v_3\|_{L^q(Q_T)}  \right)  \\
&\le C_{q,\lambda_3,\mu_3,\omega,T} \, \varepsilon^{\frac{1}{q}}  \left(  \left\|  \lambda_3 \Delta v_{30} - \mu_3 v_{30} + u_{30} \right\|_{W^{2,q}(\Omega)} +  \varepsilon_{\mathsf{in}}^{\frac{2}{q}} + \varepsilon^{\frac{1}{q}} \right), 
\end{aligned}
\end{align*} 
where the term $\Delta \widehat v^{\,\varepsilon}_3(0)$ was estimated using the identity \eqref{Expression.InitialLayer}, the boundedness of $\partial_t v_3$ in $L^q(Q_T)$ was established in Lemma~\ref{Lem.Rate.TimeDeriOfv3}. This shows \eqref{Theo.ConvergenceRate.b.State2}.

 \medskip
 
\noindent{\underline{Part c.}} This part will be proved by using the smoothing effect of the heat semigroup. Let $\overline{q}>4$. Then, there exists $\varsigma>0$ small enough such that $1/\overline{q}<1/2-1/\overline{q}-\varsigma$ and $4+\varsigma<\overline{q}$. Then, we can find $\sigma$ such that  
\begin{align*}
\frac{1}{\overline{q}} < \sigma < \frac{1}{2} - \frac{1}{\overline{q}} - \varsigma.
\end{align*}
Using the equation for $\widehat u^{\,\varepsilon}_i$, $i=1,2$, the same techniques as \eqref{Lemma.LpEstimate.Proof2}  show that  
\begin{equation*}
  \begin{split}
      \|\widehat u^{\,\varepsilon}_i\|_{\LO{\infty}} 
    & = 
    \left\| \int^t_{0} e^{(t-s)(d_i\Delta-I)}  \left( \chi_i \di (\widehat{u}^{\,\varepsilon}_i \nabla v_3^\varepsilon) + \chi_i \di (u_i \nabla \widehat{v}^{\,\varepsilon}_3) + \widehat{f}^{\,\varepsilon}_i + \widehat u^{\,\varepsilon}_i \right) \, ds \right\|_{\LO{\infty}} \\ 
  & \le  C_T  \int^t_{0} (t-s)^{-\sigma-\frac{1}{2}-\varsigma} \left( \|\widehat{u}^{\,\varepsilon}_i \nabla v_3^\varepsilon +  u_i \nabla \widehat{v}^{\,\varepsilon}_3\|_{L^{4+\varsigma}(\Omega)} +  \|\widehat{f}^{\,\varepsilon}_i + \widehat u^{\,\varepsilon}_i\|_{L^{4+\varsigma}(\Omega)}  \right) \, ds \\
  & \le  C_{\overline{q},\varsigma,\sigma,T}   \left( \|\widehat{u}^{\,\varepsilon}_i \nabla v_3^\varepsilon +  u_i \nabla \widehat{v}^{\,\varepsilon}_3\|_{L^{4+\varsigma}(Q_T)} +  \|\widehat{f}^{\,\varepsilon}_i + \widehat u^{\,\varepsilon}_i\|_{L^{4+\varsigma}(Q_T)}  \right) t^{\frac{1}{2}-\frac{1}{\overline{q}}-\varsigma -\sigma} \\
  & \le  C_{\overline{q},\varsigma,\sigma,T} \Bigg( \|\widehat{u}^{\,\varepsilon}_i\|_{L^{\overline{q}}(Q_T)} + \| \nabla \widehat{v}^{\,\varepsilon}_3\|_{L^{\overline{q}}(Q_T)} + \sum_{j=1}^3 \|\widehat u^{\,\varepsilon}_j\|_{L^{\overline{q}}(Q_T)}  \Bigg) T^{\frac{1}{2}-\frac{1}{\overline{q}}-\varsigma -\sigma}  . 
  \end{split}
\end{equation*}
Thanks to Parts a-b of this theorem, we get  
\begin{align*}
\|\widehat u^{\,\varepsilon}_i\|_{\LO{\infty}} \le C_T    \varepsilon^{\frac{1}{\overline{q}}}  \left(  \widehat{\varepsilon}_{\mathsf{in}} +  \varepsilon_{\mathsf{in}}^{\frac{2}{\overline{q}}} + \varepsilon^{\frac{1}{\overline{q}}} \right),  
\end{align*}
i.e., \eqref{Theo.ConvergenceRate.b.State3} is proved.
\end{proof}

\subsection{Distance from trajectories to the critical manifold}

\begin{proof}[Proof of Theorem~\ref{Coro.ConvToCritMani}]
     Since $\lambda_3\Delta v_3 + \mu_3 v_3 - u_3 = 0$, Part a is a direct corollary of Theorem~\ref{Theo.ConvergenceRate}. Let us prove Part b. Multiplying the equation for $\widehat{v}_3^\varepsilon$ by $\Delta v_3^\varepsilon$ yields 
\begin{align}
\begin{aligned}
    & \frac{\varepsilon}{2} \frac{d}{dt} \int_\Omega |\Delta \widehat{v}_3^\varepsilon|^2 + \frac{\lambda_3}{2} \int_\Omega |\nabla \Delta \widehat{v}_3^\varepsilon|^2 + \mu_3 \int_\Omega |\Delta \widehat{v}_3^\varepsilon|^2 \\
    & \le \frac{1}{\lambda_3} \int_\Omega |\nabla \widehat{u}_3^\varepsilon|^2 + \frac{\varepsilon^2}{\lambda_3} \int_\Omega |\partial_t \nabla v_3|^2 + \frac{\lambda_3}{2} \int_{\partial \Omega} \Delta \widehat{v}_3^\varepsilon \nabla \Delta \widehat{v}_3^\varepsilon \cdot \nu, 
\end{aligned}
\label{Distance:FurtherComment:Proof1}
\end{align}
where, since for each $\varepsilon>0$ the classical solutions $(u_i^\varepsilon,v_i^\varepsilon)$ and $(u_i,v_i)$ of \eqref{System.UV}-\eqref{Condition.Boun-Init.Epsilon} and \eqref{System.UV.Limit}-\eqref{Condition.Boun-Init.Limit} are sufficiently smooth, we have 
\begin{align*}
    \lambda_3 \nabla \Delta \widehat{v}_3^\varepsilon \cdot \nu =  \nabla ( \varepsilon \partial_t \widehat{v}_3^\varepsilon + \mu_3 \widehat{v}_3^\varepsilon - \widehat{u}_3^\varepsilon + \varepsilon \partial_t v_3) \cdot \nu = 0
\end{align*}
on the boundary $\Gamma$. Moreover, basing on the regularity $\partial_t u_3 \in L^2(Q_T)$, the smoothing effect of the heat semigroup shows $\partial_t \nabla v_3 \in L^2(Q_T)^2$. Thanks to the estimate for $\nabla \widehat{u}_3^\varepsilon$ in \eqref{Theo.ConvergenceRate.a.State2}, we now follow from \eqref{Distance:FurtherComment:Proof1} that   
\begin{align}
    \frac{\varepsilon}{2} \frac{d}{dt} \int_\Omega |\Delta \widehat{v}_3^\varepsilon|^2 + \mu_3 \int_\Omega |\Delta \widehat{v}_3^\varepsilon|^2 \le C_T (\varepsilon_{\mathsf{in}}^2+\varepsilon^2), 
\end{align}
which, due to the comparison principle, shows that the $L^\infty(0,T;L^2(\Omega))$-norm of $\Delta \widehat{v}_3^\varepsilon$ is of the order $O(\varepsilon_{\mathsf{in}}+\varepsilon)$. This, combined with \eqref{Theo.ConvergenceRate.a.State1}-\eqref{Theo.ConvergenceRate.a.State2}, shows the estimate \eqref{Distance:FurtherComment}.
\end{proof}

\section{Numerical simulations} \label{sec:6}

The numerical simulations are based on Julia (\cite{rackauckas2017differentialequations}) using finite differences in space for discretizing the spatial operators and a split ODE solver for treating the $\varepsilon$-depending diffusion part of $v_3$  separately. As $\varepsilon$ increases the stiffness of the discretized diffusion matrix even more, an exponential integrator method is used for calculating the dominant part concerning the stiffness with higher accuracy. The elliptic equations are solved by iterative methods using Krylov-subspaces and generalized minimal residuals (Krylov-GMRES), \cite{Montoison2023}. System \eqref{System.UV}-\eqref{Condition.Boun-Init.Epsilon} is numerically solved for the fixed parameters in Table~\ref{tab:parameters}.
\begin{table}[h!]
    \centering
    \begin{tabular}{cc c|c cc c|c cc c|c cc}
        $d_i$ & 0.1 & &  & $\chi_j$ & 1.0 & & &$\chi_{31}$ & 1.0 & & & $\chi_{32}$ & 1.0 \\
         $\alpha_1$ & 0.8  &&  &$\alpha_2$  &1.0  & &       &  $\beta_1$& 0.6 & & & $\beta_1$ & 0.5\\
        $m_1$ & 0.3 & & & $m_2$ & 0.1 & & &     $\gamma_1$ & 0.5  & & & $\gamma_2$ & 0.3 \\
         $k$& 0.1 &  && $l$  & 0.1 & &&
       $\lambda_i$  & 1  & & & $\mu_i$  & 0.1 \\
    \end{tabular}
    \caption{Parameters used in numerical simulations. $i=1,2,3$ and $j=1,2$. }
    \label{tab:parameters}
\end{table}

Figure~\ref{fig:pde1} shows the dynamical behaviour for system \eqref{System.UV}-\eqref{Condition.Boun-Init.Epsilon} with $\varepsilon=10^{-5}$.
The simulations are carried out in one spatial dimension, and we may find different behaviours in two spatial dimensions. 
\begin{figure}[h]
\begin{center}
\begin{subfigure}{.45\linewidth}
\centering
\includegraphics[width=0.9\textwidth]{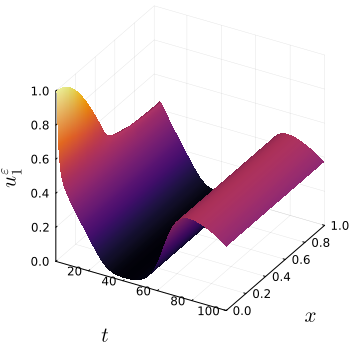}
\caption{prey $u_1^\varepsilon$}
\label{fig:sub_pde1}
\end{subfigure}%
\begin{subfigure}{.45\linewidth}
\centering
\includegraphics[width=0.9\textwidth]{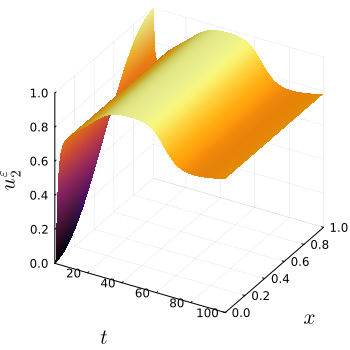}
\caption{prey $u_2^\varepsilon$}
\label{fig:sub_pde2}
\end{subfigure}%

\begin{subfigure}{.45\linewidth}
\centering
\includegraphics[width=0.9\textwidth]{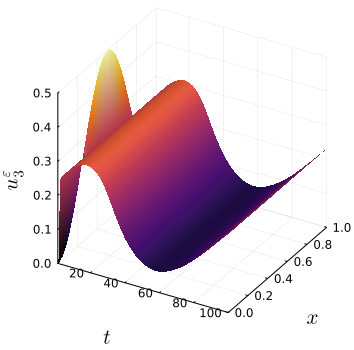}
\caption{predator $u_3^\varepsilon$}
\label{fig:sub_pde3}
\end{subfigure}%
\begin{subfigure}{.45\linewidth}
\centering
\includegraphics[width=0.9\textwidth]{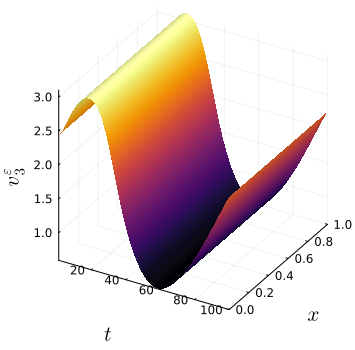}
\caption{chemical $v_3^\varepsilon$}
\label{fig:sub_pde4}
\end{subfigure}%
\caption{Dynamics for system \eqref{System.UV}-\eqref{Condition.Boun-Init.Epsilon} with $\varepsilon=10^{-5}$.}
\label{fig:pde1}
\end{center}
\end{figure}

\subsection{Comparison of the $\varepsilon-$depending and limiting systems}

In Theorem~\ref{Theo.ConvergenceRate}, Part a, we obtained respectively the convergence rate $\widehat{u}_i^\varepsilon=u_i^\varepsilon-u_i$, for $i=1,2,3,$ in $L^\infty(0,T; L^2(\Omega))$ and $\widehat{v}_3^\varepsilon=v_3^\varepsilon-v_3$ in $L^\infty(0,T; H^1(\Omega))$. Here, we investigate these rates numerically by considering the initial values on the critical manifold $\mathcal C$ (see Remark~\ref{Remark.Theo.FastSignalDiffLimit}), or more precisely, $\varepsilon_{\mathsf{in}}=0$. 

\medskip

Figure~\ref{fig:pde2} compares the solutions of the $\varepsilon-$depending system and the limiting system for $\varepsilon=10^{-5}$. 
The difference between the solutions of the species is of order $10^{-6}$, while the chemical $v_3$ shows a larger difference of $10^{-5}$. 
Therefore, the $L^\infty(Q_T)$ difference is of the order $\varepsilon$.

\medskip
 
\begin{figure}[htbp]
\begin{center}
\begin{subfigure}{.45\linewidth}
\centering
\includegraphics[width=\textwidth]{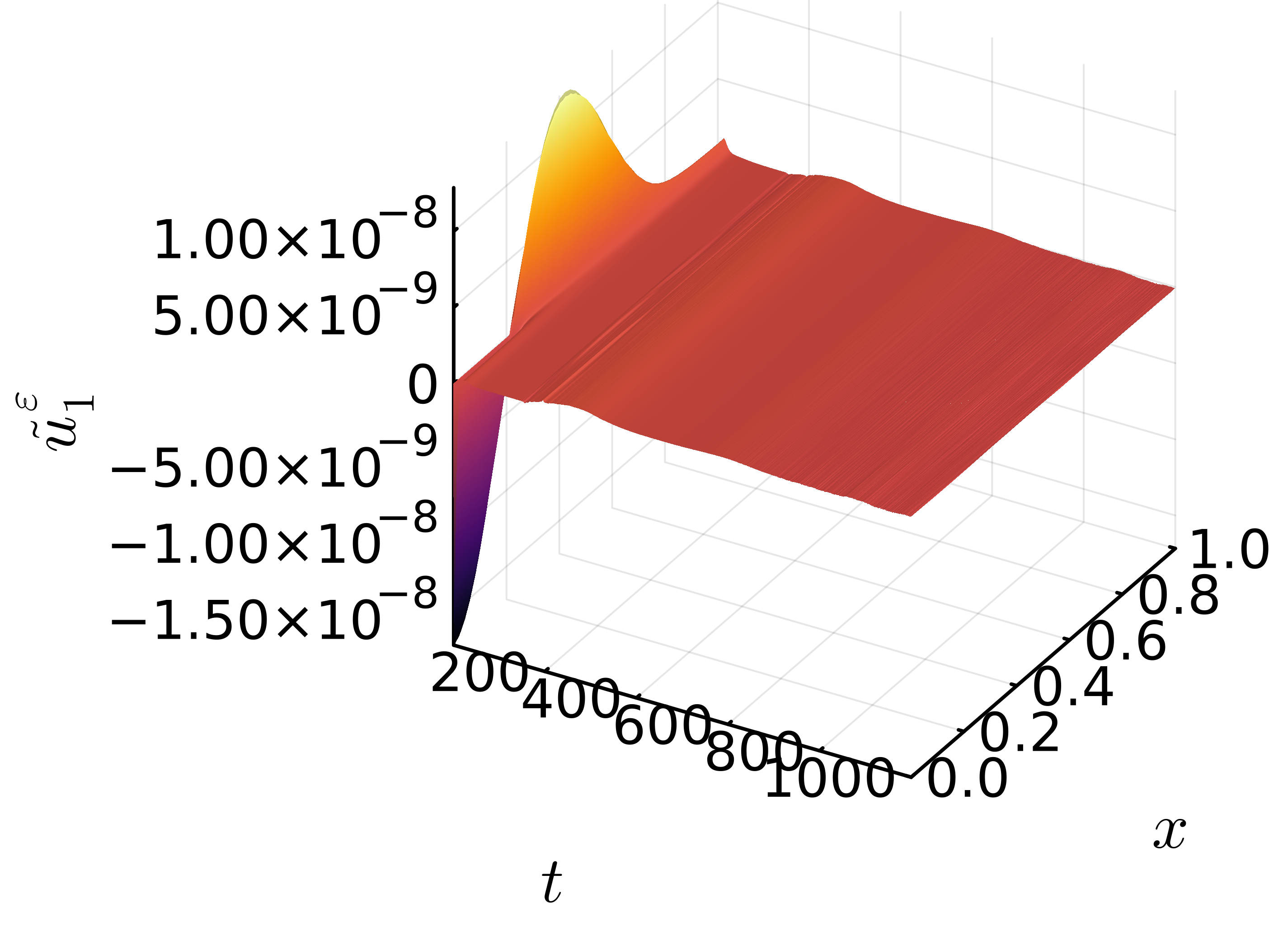}
\caption{ prey rate $\widehat{u}_1^\varepsilon=u_1^\varepsilon-u_1$}
\label{fig:sub_pde1d}
\end{subfigure}%
\vspace{0.2cm}
\begin{subfigure}{.45\linewidth}
\centering
\includegraphics[width=\textwidth]{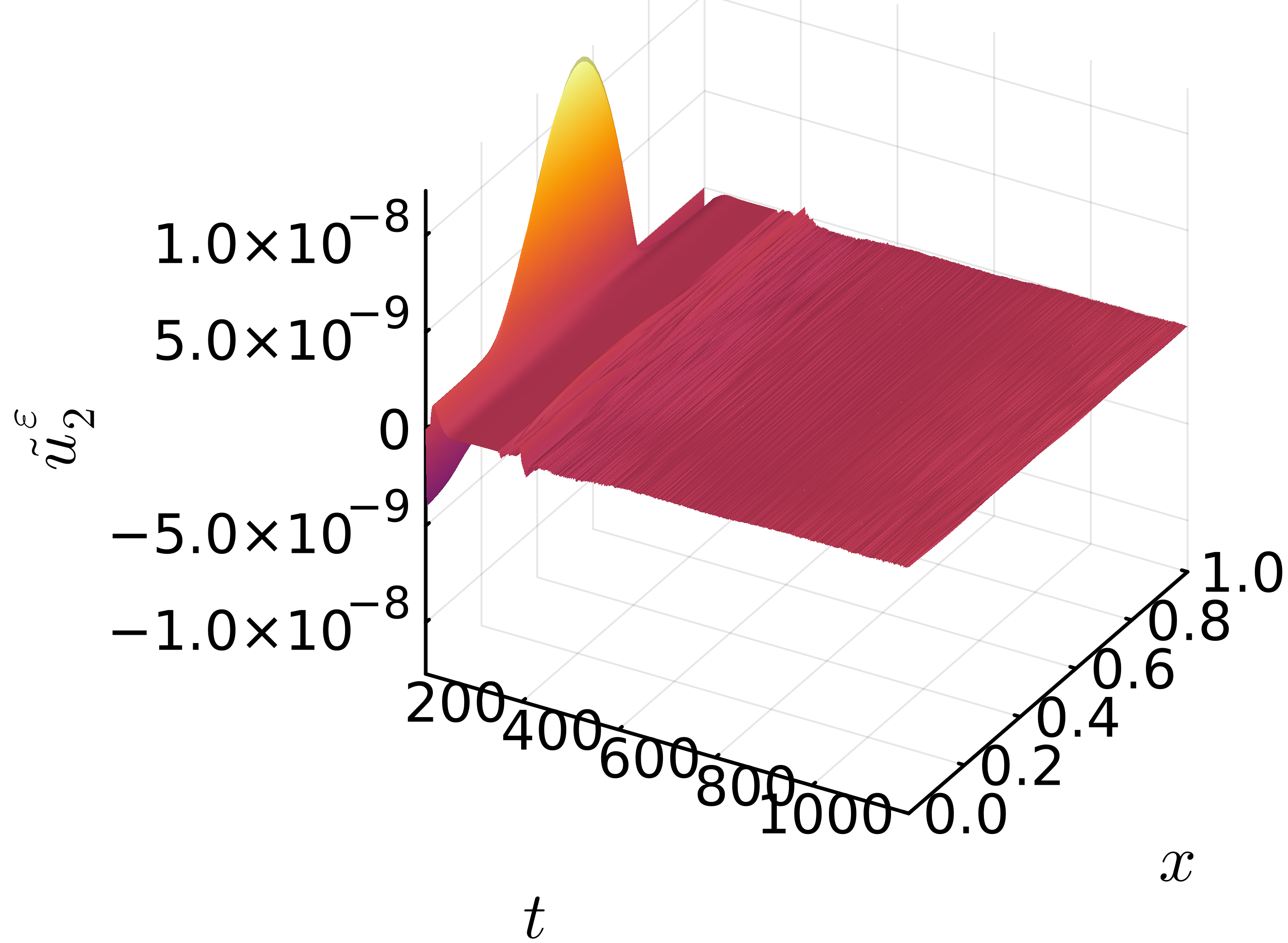}
\caption{ prey rate $\widehat{u}_2^\varepsilon=u_2^\varepsilon-u_2$}
\label{fig:sub_pde2d}
\end{subfigure}%

\begin{subfigure}{.45\linewidth}
\centering
\includegraphics[width=\textwidth]{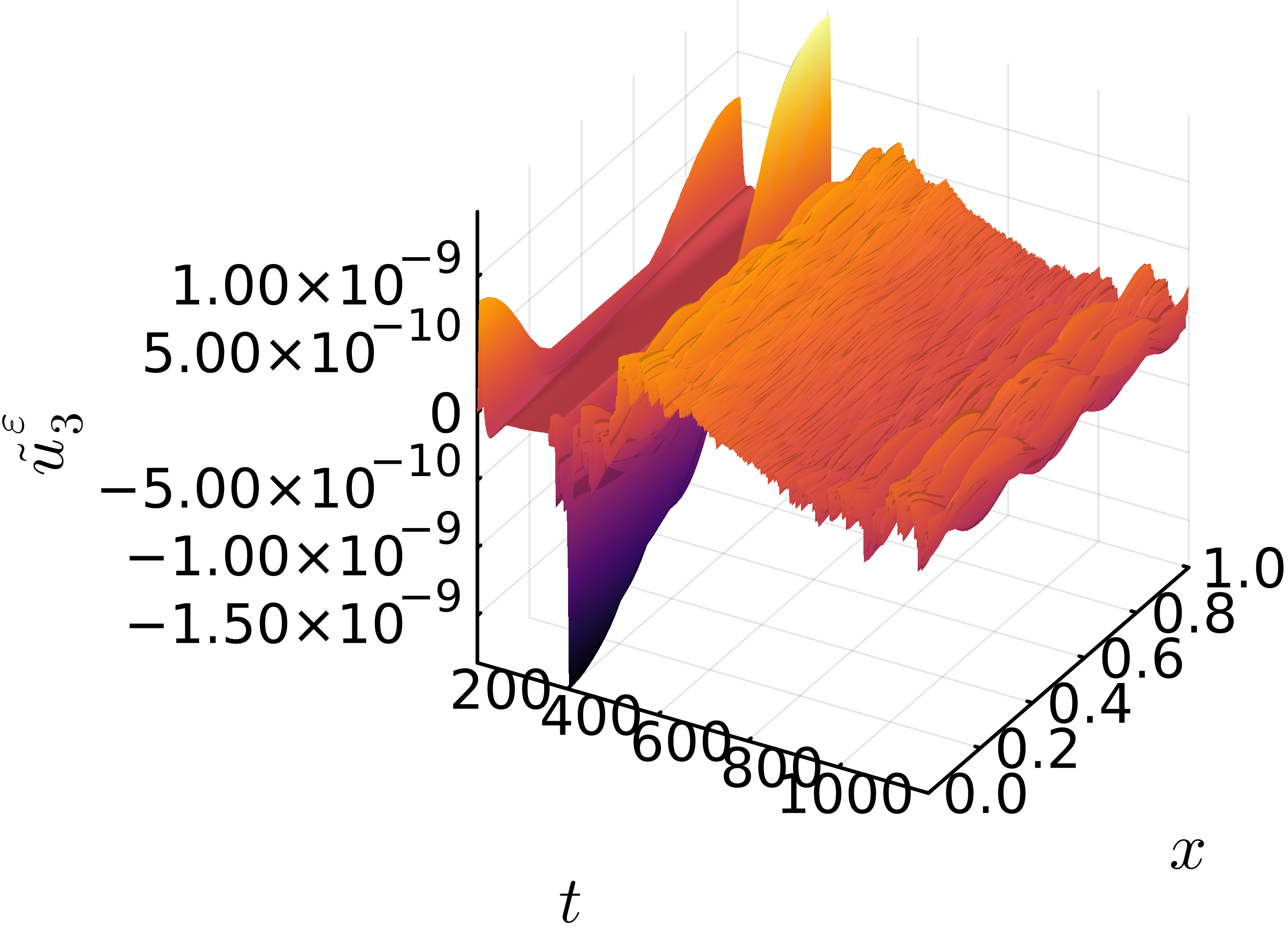}
\caption{ predator rate $\widehat{u}_3^\varepsilon=u_3^\varepsilon-u_3$}
\label{fig:sub_pde3d}
\end{subfigure}%
\begin{subfigure}{.45\linewidth}
\centering
\includegraphics[width=\textwidth]{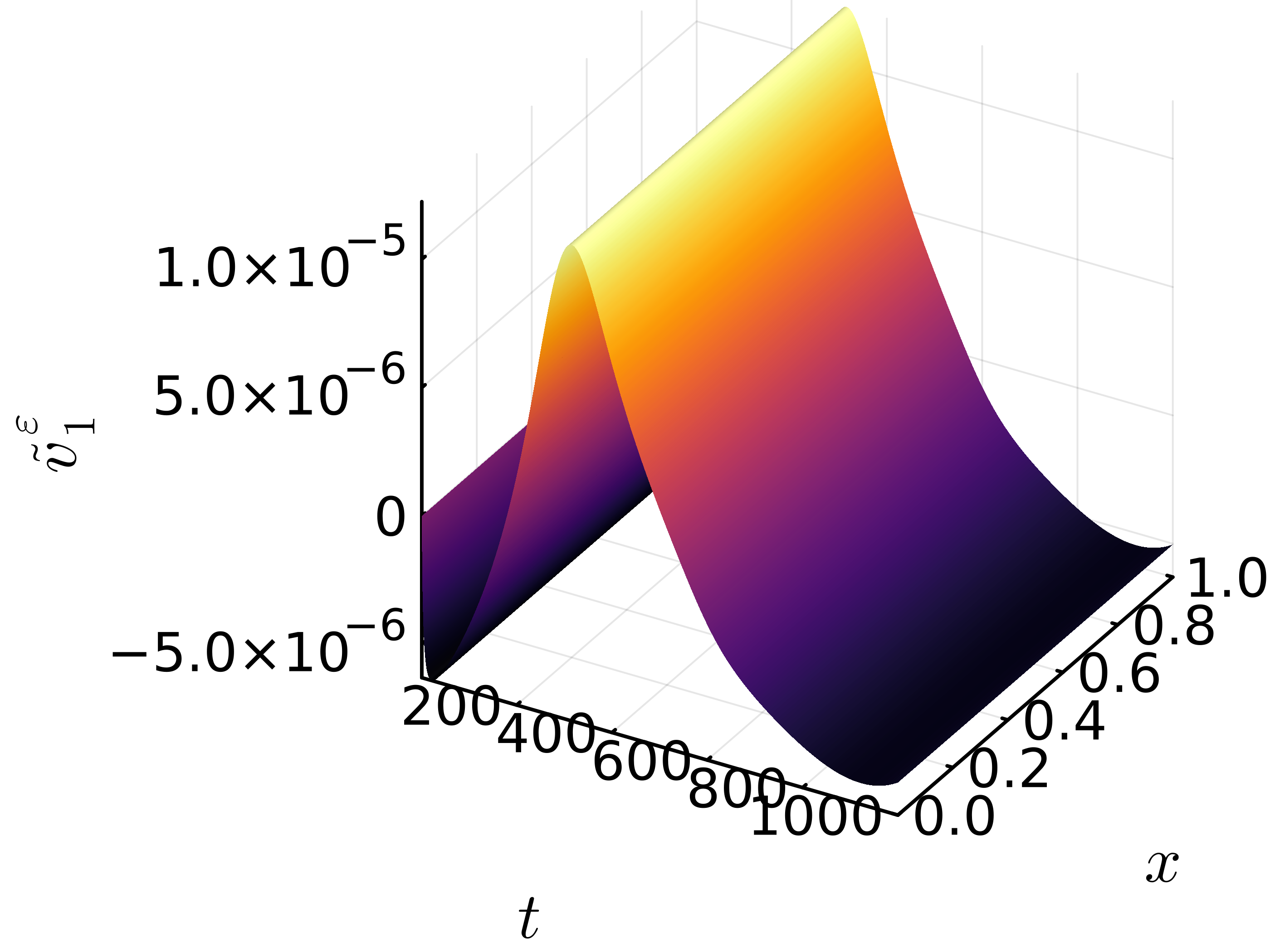}
\caption{ chemical rate $\widehat{v}_3^\varepsilon=v_3^\varepsilon-v_3$}
\label{fig:sub_pde4d}
\end{subfigure}%
\caption{Difference of the solutions of systems \eqref{System.UV.Limit}-\eqref{Condition.Boun-Init.Limit} and \eqref{System.UV}-\eqref{Condition.Boun-Init.Epsilon} with $\varepsilon=10^{-5}$.}
\label{fig:pde2}
\end{center}
\end{figure}


 Figure~\ref{fig:linf} shows the differences between the solutions of the $\varepsilon-$depending system and the limiting system for various $\varepsilon=10^{-k}$ for $k=1,\dots, 7$. Smaller values of $\varepsilon$ are not meaningful for simulations using an accuracy of $10^{-16}$. The $L^\infty$ error behaves like expected linearly with the order of magnitude of $\varepsilon$, compare Theorem~\ref{Theo.ConvergenceRate}.
The initial data starts on the critical manifold, compare Remark~\ref{Remark.Theo.ConvergenceRate}, therefore $\varepsilon_\mathsf{in} = 0$.


\begin{figure}
        \begin{subfigure}{.48\linewidth}
        \centering
              \includegraphics[height=4.8cm]{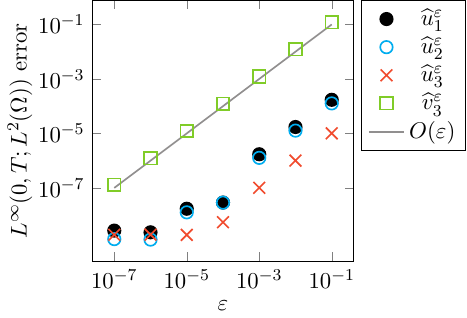}
    \caption{ $\varepsilon_\mathsf{in} =0$}
    \label{fig:linf}
        \end{subfigure}%
        \begin{subfigure}{.48\linewidth}
        \centering
        \includegraphics[height=4.8cm]{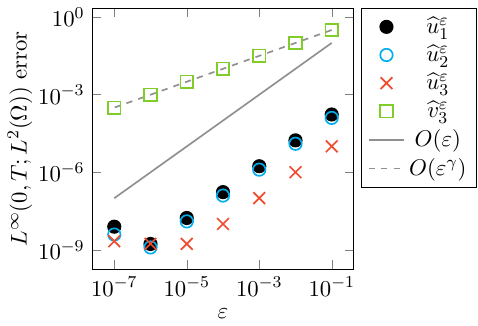}
        \caption{ $0\not =\varepsilon_\mathsf{in} = O(\varepsilon^\gamma)$ for $\gamma=1/2$}
        \label{fig:linf_inital_manifold}
        \end{subfigure}%
    \caption{$L^\infty(0,T;L^2(\Omega))$-differences of the solutions of systems \eqref{System.UV.Limit}-\eqref{Condition.Boun-Init.Limit} and \eqref{System.UV}-\eqref{Condition.Boun-Init.Epsilon}, with initial values (a) on or (b) outside the critical manifold.}
    \label{fig:linf_both}
\end{figure}

By Theorem~\ref{Theo.ConvergenceRate}, it is interesting to consider initial values $(u_{30},v_{30})$ from outside the critical manifold. More precisely, we consider here the case $0\not =\varepsilon_{\mathsf{in}} = O(\varepsilon^\gamma)$ for $0\le \gamma <1$. A different dependency of the $L^\infty(0,T;L^2(\Omega))$ error on $\varepsilon$ is observed. Figure~\ref{fig:linf_inital_manifold} shows that the rates of $u_i^\varepsilon-u_i$, $i=1,2,3$, are of the order $O(\varepsilon)$, but the rate of $v_3^\varepsilon-v_3$  has the order $O(\varepsilon^\gamma)$. Moreover, one can also observe the vanishing of the parabolicity since the smaller $\varepsilon$, the faster rate from $O(\varepsilon)$ for $v_3^\varepsilon-v_3$.

\medskip

Varying the parameter $\gamma$, the effect of the initial layer is given in Table~\ref{Fig:RateOrders}. This is a numerical validation for the analytical proofs, showing as well the high accuracy of the numerical scheme.  
\begin{table}[h!]
    \centering
    \begin{tabular}{c|c|c|c}
   rate of    & $0\le \gamma<1/2$   &  $1/2\le \gamma<1$ & $\gamma\ge 1$   \\[0.5em]
  $u_i^\varepsilon-u_i$, $i=1,2,3$   & $O(\varepsilon^{1/2+\gamma})$   &  $O(\varepsilon)$ & $O(\varepsilon)$    \\
  $v_3^\varepsilon-v_3$   & $O(\varepsilon^{\gamma})$  & $O(\varepsilon^{\gamma})$  & $O(\varepsilon)$  \\
\end{tabular}
    \caption{The effect of the initial layer on the rates as $0\not =\varepsilon_{\mathsf{in}} = O(\varepsilon^\gamma)$ for $0\le \gamma <1$.}
    \label{Fig:RateOrders}
\end{table}

\subsection{Dynamics of the spatially independent system}

Spatial differences of the solution in Figure~\ref{fig:pde1} are smoothed quickly, and the system shows the oscillatory behaviour of the underlying ODE system. Therefore, we investigate further the underlying ODE system and compare the model with two preys and one predator to a competition system with only one prey. 

\medskip

The system under investigation models the dynamics of two competitive preys and one predator population. 
Additionally to the interaction by local reaction, chemotactic movement is included. 
Even in the spatially localised setting, without regarding any diffusive or chemotactic movement, the system dynamics differ from the classical two-population predator-prey model. 

\medskip

To justify from a modelling point of view why we analyse the three population models, we give numerical arguments for the different structures of the ordinary differential equation system 
\begin{align}
\label{System.U.ODE}
\left\{ \begin{array}{llll}
\partial_t u_1    &=&   \alpha_1u_1(1-u_1 - \beta_1  u_2) -  \dfrac{m_1 u_1}{\eta_1+u_1} u_3, \vspace*{0.05cm} \\
\partial_t u_2   &=&  \alpha_2u_2(1-u_2 - \beta_2 u_1) -  \dfrac{m_2 u_2}{\eta_2+u_2} u_3 , \vspace*{0.05cm} \\
\partial_t u_3   &=&  \left( \gamma_1 \dfrac{m_1 u_1}{\eta_1+u_1} + \gamma_2 \dfrac{m_2 u_2}{\eta_2+u_2} - k \right) u_3 - lu_3^2,
\end{array}
\right.
\end{align}
where $\alpha_1,\alpha_2$ are biotic potentials;
 $\beta_1,\beta_2$  are coefficients of inter-specific competition between two prey species;
 $m_1,m_2$ are predation coefficients;
 $\eta_1,\eta_2$ are half-saturation constants;
 $\gamma_1,\gamma_2$ are conversion factors;
 $k$ and $l$ are the natural death rates of the predator and the intra-specific competition among predators, respectively.
We compare the system's behaviour with a  
predator-prey model
\begin{align}
\label{System.pp.ODE}
\left\{ \begin{array}{llll}
\partial_t u_1    &=&   \alpha_1u_1(1-u_1) -  \dfrac{m_1}{\eta_1+u_1}u_1 u_3, \vspace*{0.05cm} \\
\partial_t u_3   &=&  \left( \gamma_1 \dfrac{m_1 u_1}{\eta_1+u_1} - k \right) u_3 - lu_3^2.
\end{array}
\right.
\end{align} 

The bifurcation diagrams show the (in-)stability of stationary states and provide information on oscillatory solutions. The bifurcation diagrams were implemented with the Julia package BifurcationKit.jl, \cite{veltz:hal-02902346}.

\medskip

\begin{figure}[htbp]
\begin{center}
\begin{subfigure}{.32\linewidth}
\centering
\includegraphics[width=\textwidth]{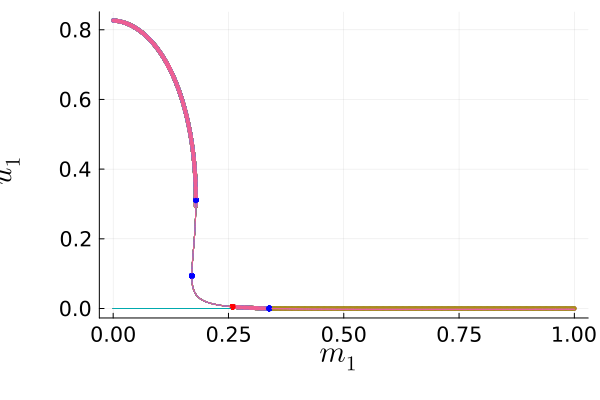}
\caption{$u_1$}
\label{fig:sub31}
\end{subfigure}%
\begin{subfigure}{.32\linewidth}
\centering
\includegraphics[width=\textwidth]{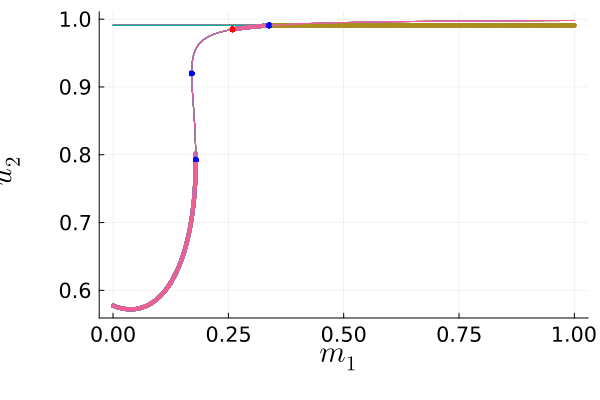}
\caption{$u_2$}
\label{fig:sub32}
\end{subfigure}%
\begin{subfigure}{.32\linewidth}
\centering
\includegraphics[width=\textwidth]{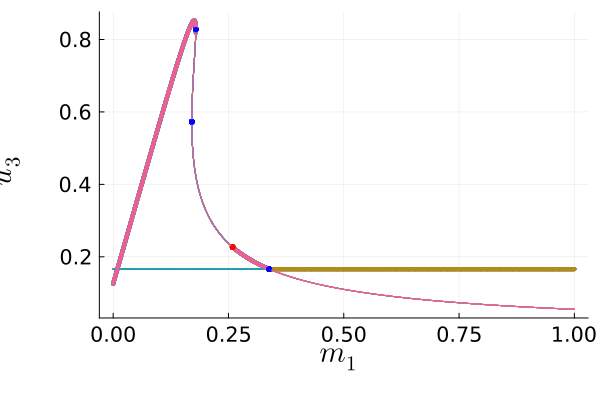}
\caption{$u_3$}
\label{fig:sub33}
\end{subfigure}%
\caption{Bifurcation diagram of the three-population-model \eqref{System.U.ODE}. The bifurcation parameter is $m_1$. Thin lines indicate unstable states, thick lines stable states. Dots indicate bifurcation points.}
\label{fig:1}
\end{center}
\end{figure}

Figure~\ref{fig:1} shows the bifurcation diagram for the full ODE model \eqref{System.U.ODE} depending on $m_1$. All other parameters are fixed.
In the parameter region where none of the stationary states is stable, the system shows oscillations, see Figure~\ref{fig:2}.

\begin{figure}[htbp]
\begin{center}
\begin{subfigure}{.48\linewidth}
\centering
\includegraphics[width=\textwidth]{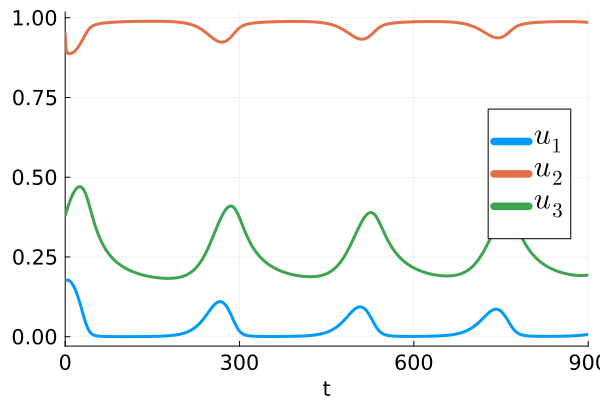}
\caption{short time interval}
\label{fig:sub21}
\end{subfigure}%
\begin{subfigure}{.48\linewidth}
\centering
\includegraphics[width=\textwidth]{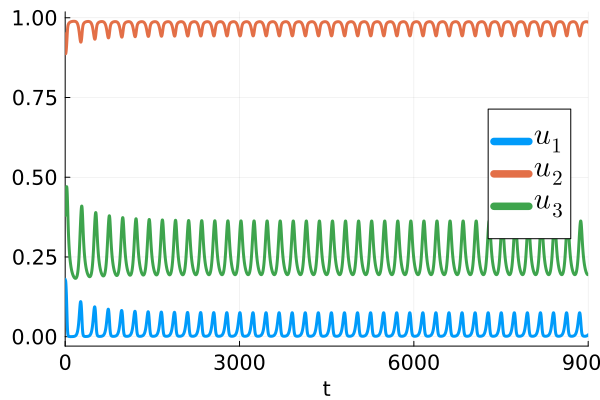}
\caption{large time interval}
\label{fig:sub22}
\end{subfigure}%
\caption{Dynamics of the three-population-model \eqref{System.U.ODE} for $\beta_1$ in the unstable region.}
\label{fig:2}
\end{center}
\end{figure}

\medskip
 
The two bifurcation diagrams are relevant for comparing with the reduced ODE system~\eqref{System.pp.ODE}.
In the one prey - one predator system \eqref{System.pp.ODE}, we see the change of the system behaviour from stable stationary states for small $m_1$ to oscillating solution for larger $m_1$, see Figure~\ref{fig:4}. For very small $m_1$, only the prey population survives, and the predator population becomes extinct.  

\begin{figure}[htbp]
\begin{center}
\begin{subfigure}{.48\linewidth}
\centering
\includegraphics[width=\textwidth]{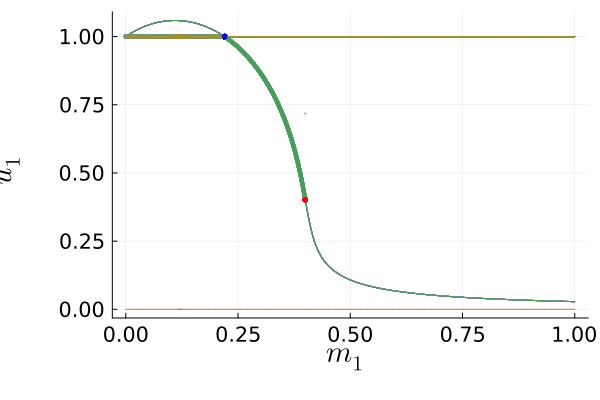}
\caption{prey $u_1$}
\label{fig:sub41}
\end{subfigure}%
\begin{subfigure}{.48\linewidth}
\centering
\includegraphics[width=\textwidth]{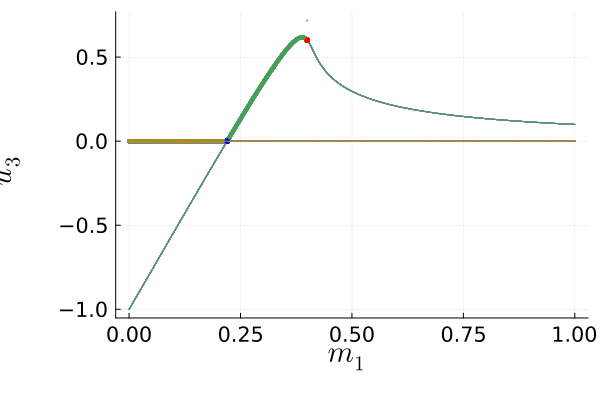}
\caption{predator $u_3$}
\label{fig:sub42}
\end{subfigure}%
\caption{Bifurcation diagram of the one prey - one predator system \eqref{System.pp.ODE} with the bifurcation parameter $m_1$. }
\label{fig:4}
\end{center}
\end{figure}

The oscillatory behaviour of the predator-prey system for parameter values $m_1$ larger (or equal) to the red Hopf bifurcation point is shown in Figure~\ref{fig:5}.
\begin{figure}[htbp]
\begin{center}
\includegraphics[width=0.5\textwidth]{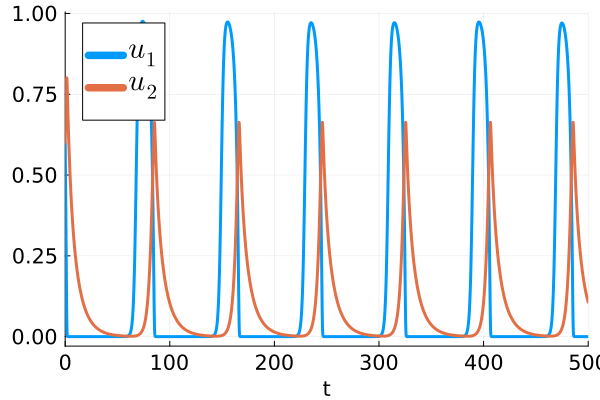}
\caption{Oscillatory dynamics of the one prey - one predator system \eqref{System.pp.ODE}.  }
\label{fig:5}
\end{center}
\end{figure}
The predator-prey model with only one prey population shows that for small parameters, the predator extinction occurs for small $m_1$, then stationary co-existence and the well-known oscillations for larger $m_1$. 

\medskip

The two-prey-one-predator model under investigation in this paper shows depending on the parameter $m_1$ a different behaviour: for small $m_1$, all three populations co-exist, for some medium parameter values $m_1$ the system oscillates, and for large $m_1$ the system becomes a one-prey-one-predator system due to extinction of $u_2$. 
The oscillations show a time-delay of population maxima of the two prey. Therefore, combining the two prey populations into one prey population is not meaningful and reduces the system's complexity.  

\medskip

Consequently, the ordinary differential equation setting of the studied model shows a richer behaviour than the two included two-population models. 
The space-depending system in Figure~\ref{fig:pde1} shows the same behaviour due to a fast levelling of any spatial difference. 
The study of a chemotaxis system with two prey populations and one predator population is therefore an extension of models for one prey population.

\section{Further comments}
\label{Sec.Extention-Outlook} 

Our results are obtained for the system with only parabolic equation for $v_3^\varepsilon$. However, the analysis clearly works with the fully parabolic system, i.e. the system in which equations for all chemical concentrations are parabolic, given as follows 
 \begin{align}
\left\{ \begin{array}{llll}
\hspace*{0.15cm} \partial_t u_1^\varepsilon  - d_1 \Delta u_1^\varepsilon  - \chi_1 \di ( u_1^\varepsilon \nabla v_3^\varepsilon) &\hspace*{-0.2cm}=\hspace*{-0.2cm}& f_1( u_1^\varepsilon, u_2^\varepsilon, u_3^\varepsilon) ,  \\
\hspace*{0.15cm} \partial_t u_2^\varepsilon - d_2\Delta u_2^\varepsilon  - \chi_2 \di ( u_2^\varepsilon \nabla  v_3^\varepsilon) &\hspace*{-0.2cm}=\hspace*{-0.2cm}& f_2( u_1^\varepsilon, u_2^\varepsilon, u_3^\varepsilon)  , \\
\hspace*{0.15cm} \partial_t u_3^\varepsilon -  d_3\Delta u_3^\varepsilon +  \sum_{i=1}^2 \chi_{3i} \di ( u_3^\varepsilon \nabla v_i^\varepsilon)  &\hspace*{-0.2cm}=\hspace*{-0.2cm}&  f_3( u_1^\varepsilon, u_2^\varepsilon, u_3^\varepsilon) ,  \\ 
\varepsilon \partial_t v_i^\varepsilon - \lambda_i \Delta v_i^\varepsilon + \mu_i v_i^\varepsilon = \zeta_i u_i^\varepsilon, && i=1,2,3,  
\end{array}
\right.
\end{align}
equipped with the boundary - initial conditions 
\begin{align}
(\nabla u_i^\varepsilon \cdot \nu, \nabla v_i^\varepsilon \cdot \nu)|_{\Gamma_\infty} = 0, \quad 
(u_i^\varepsilon(0),v_i^\varepsilon(0))|_{\Omega}=(u_{i0},v_{30}), \quad i=1,2,3.
\end{align} 
In this case, we can see from Theorems~\ref{Theo.GlobalClassicalSol}-\ref{Theo.ConvergenceRate} the following slight differences
\begin{itemize}
\item The feedback argument from prey to predator as Lemma~\ref{Lem.Feedback12to3} is not necessary for the global existence; 
\item The components $v_1^\varepsilon,v_2^\varepsilon,v_3^\varepsilon$ are uniformly bounded in $L^\infty(Q_T) \cap L^q(0,T;W^{2,q}(\Omega))$; and 
\item The $L^\infty$-in-time convergence rates $\widehat v^{\,\varepsilon}_1,\widehat v^{\,\varepsilon}_2,\widehat v^{\,\varepsilon}_3$ will be affected by the initial layer corresponding to the critical manifold $$\big\{(u_{i0},v_{i0})\in L^2(\Omega)^3\times H^2(\Omega)^3: \,  \lambda_i \Delta v_i - \mu_i v_i + \zeta_i u_i = 0, i=1,2,3 \big\}.$$
\end{itemize}

Since our analysis is not restricted to only positive chemotaxis coefficients, fast signal diffusion limits and $L^\infty$-in-time convergence rates for the following two-dimensional chemotaxis system can be studied similarly   
\begin{align*}
\left\{ \begin{array}{llll}
\partial_t u_i^\varepsilon  - d_i \Delta u_i^\varepsilon  +   \displaystyle \sum_{j=1}^k \chi_{ij} \di ( u_i^\varepsilon \nabla v_j^\varepsilon) = f_i( u_1^\varepsilon,\dots, u_k^\varepsilon) ,   \\
\varepsilon_i \partial_t v_i^\varepsilon - \lambda_i \Delta v_i^\varepsilon + \mu_i v_i^\varepsilon = \zeta_i u_i^\varepsilon,    
\end{array}
\right.  
\end{align*}
$i=1,\dots,k$,
which is subjected to the homogeneous Neumann boundary conditions and given smooth initial data. Here, $d_i>0$, $\chi_{ij}\in\R$, $\lambda_i,\mu_i,\zeta_i>0$ for $1\le i,j\le k$. The parameters  $\varepsilon_1,\dots,\varepsilon_k\in \{0;\,\varepsilon\}$, with $0<\varepsilon\ll1$, satisfy that there exists at least one different from $0$. The species kinetics are generally of the competitive or logistic types.

\vspace{0.4cm}

\noindent {\Large \bf Acknowledgement}

\medskip

\noindent The authors would like to thank Bao Quoc Tang for his valuable
comments on the paper and Simon-Christian Klein for discussing the numerical schemes. The second author is funded by the FWF project “Quasi-steady-state approximation for PDE”, number I-5213. The third author is partially supported by NSFC Grants No. 12271227 and China Scholarship Council (Contract No. 202206180025).

\vspace{0.3cm}

\noindent {\Large \bf Conflict of Interest}   The authors declare that they have no conflict of interest.

\vspace{0.3cm}

\noindent {\Large \bf Data Availability} 
 Data sharing not applicable.

\appendix

\vspace*{0.5cm}

\noindent {\bf \Large Appendices}

\vspace*{0.5cm}

\noindent This part is to recall or slightly improve well-known results related to heat semigroup, heat regularisation, and maximal regularity, where we assume  that $\Omega$ is a bounded domain in $\mathbb{R}^N$ with sufficiently smooth (such as $C^{2+\alpha}$ for some $\alpha>0$) boundary. 

\section{Neumann heat semigroup}

For $\lambda,\mu>0$ and $1<p<\infty$, the sectorial operator $-\lambda\Delta + \mu I$ defined on $\{v\in W^{2,p}(\Omega): \nabla v \cdot \nu =0 \text{ on } \Gamma \}$ has a countable sequence of eigenvalues with the smallest one is $\mu>0$. Therefore, it generates an analytic semigroup $\{e^{t(\lambda\Delta-\mu I)}\}_{t\ge 0}$ on $L^p(\Omega)$ such that  
\begin{align}
\|(\lambda\Delta-\mu I)^\beta e^{t(\lambda\Delta-\mu I)}f\|_{L^p(\Omega)} \le C e^{-\omega t} t^{-\beta} \|f\|_{L^p(\Omega)} , \quad t>0,  
\label{HeatSemigroup.Contraction}
\end{align}
for some $\omega >0$ and all $\beta\ge 0$, see \cite[Lemma 2.1]{horstmann2005boundedness}. In particular,  if $\beta=0$, we can take $\omega = \mu$ and $C=1$ (i.e. we have a contraction semigroup), see \cite[Theorem 13.4]{amann1984existence}. Note that $e^{t(\lambda\Delta-\mu I)}$ is not commutative with the divergence $\nabla \cdot$. A combination of them is estimated as follows, where, for the purpose of consistency, we will use the same $\omega$ not only in Lemma~\ref{Lem.HeatCombineDiv} but also throughout the paper.

\begin{lemma}[{\cite[Lemma 2.1]{horstmann2005boundedness}}]
\label{Lem.HeatCombineDiv}
Let  $1<p<\infty$. Then, for all $\kappa>0$, 
\begin{align*}
\|(-\lambda\Delta + \mu I)^{\beta}e^{t\lambda\Delta} \nabla \cdot v \|_{L^p(\Omega)} \le C_\kappa  e^{-\omega t} t^{-\beta-\frac{1}{2}-\kappa} \|v\|_{L^p(\Omega)}, \quad t>0,
\end{align*}
for some $\omega>0$ and all $v \in C_0^\infty(\Omega)$, $\beta\ge 0$. The operator $(-\lambda \Delta + \mu I)^{k}e^{t \lambda\Delta } \nabla \cdot$ consequently admits an only extension (with the same notation) to the whole space $L^p(\Omega)$. 
\end{lemma}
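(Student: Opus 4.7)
\textbf{Proof proposal for Lemma \ref{Lem.HeatCombineDiv}.} The strategy is to decompose the semigroup using the semigroup property and split the operator into two factors, each of which admits a sharp smoothing estimate already available in the literature. Concretely, for $t>0$ write
\[
(-\lambda\Delta + \mu I)^{\beta} e^{t\lambda\Delta} \nabla \cdot v \;=\; (-\lambda\Delta + \mu I)^{\beta} e^{(t/2)(\lambda\Delta-\mu I)} \cdot e^{(\mu t/2) I} e^{(t/2)\lambda\Delta} \nabla \cdot v.
\]
The first factor is controlled in $L^p(\Omega)\to L^p(\Omega)$ by \eqref{HeatSemigroup.Contraction} with norm bounded by $C(t/2)^{-\beta} e^{-\omega t/2}$. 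The remaining task is to estimate the operator norm of $e^{(t/2)\lambda\Delta} \nabla \cdot$, which I propose to treat in the second step.

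For the second factor, the plan is to use duality together with the gradient smoothing property of the Neumann heat semigroup. Formally, for smooth test functions $\phi\in C^\infty(\overline{\Omega})$ with $\nabla \phi\cdot \nu=0$ on $\Gamma$, integration by parts gives
\[
\int_\Omega \phi \cdot e^{s\lambda\Delta} (\nabla\cdot v) \;=\; -\int_\Omega \nabla ( e^{s\lambda\Delta} \phi) \cdot v,
\]
so the norm of $e^{s\lambda\Delta} \nabla \cdot$ from $L^p(\Omega)$ to $L^p(\Omega)$ coincides with the norm of $\nabla e^{s\lambda\Delta}$ from $L^{p'}(\Omega)$ to $L^{p'}(\Omega)$. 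The latter is the classical $L^p$-gradient smoothing bound for the Neumann heat semigroup, yielding a bound of order $s^{-1/2}$ on functions of mean zero; on general $L^{p'}$ data a correction in the form $s^{-1/2-\kappa}$ for arbitrary $\kappa>0$ suffices. Combining with the first factor at $s=t/2$ produces the claimed time-singularity $t^{-\beta-1/2-\kappa}$, and the exponential decay $e^{-\omega t}$ is inherited from \eqref{HeatSemigroup.Contraction}.

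Finally, since the estimate is established on the dense subspace of divergences of $C_0^\infty$-vector fields with a bound of the form $C_\kappa e^{-\omega t} t^{-\beta-1/2-\kappa} \|v\|_{L^p(\Omega)}$ in terms of $v$ only (and not of $\nabla\cdot v$), the bounded linear extension theorem immediately yields the unique extension of the operator to all of $L^p(\Omega)$, which is the last assertion of the lemma.

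The main technical obstacle that I anticipate is making the duality argument rigorous given that the Neumann heat semigroup does not directly commute with gradients or divergences due to the boundary condition, and that the operator $e^{s\lambda\Delta}\nabla\cdot$ is only \emph{a priori} defined on $\nabla\cdot C_0^\infty(\Omega)$. Producing the gradient-semigroup bound $\|\nabla e^{s\lambda\Delta}\|_{L^p\to L^p}\lesssim s^{-1/2-\kappa}$ uniformly on non-mean-zero data typically requires either splitting off the kernel of $\Delta$ via the projection onto constants (and using the exponential decay of $e^{-\omega s}$ to reabsorb the constant part) or appealing to Gaussian pointwise bounds on the Neumann heat kernel; the $\kappa$ arises precisely as a cushion to absorb the fractional power needed to pass through the boundary condition. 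Once this gradient estimate is in hand, everything else is essentially bookkeeping via the semigroup property.
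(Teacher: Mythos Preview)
The paper does not supply its own proof of this lemma; it is quoted in the appendix as a known result from Horstmann--Winkler \cite[Lemma~2.1]{horstmann2005boundedness}. Your outline is correct and is essentially one of the two standard routes to this estimate. The argument in \cite{horstmann2005boundedness} is organised slightly differently: rather than splitting the semigroup at $t/2$ and passing to the dual gradient estimate, they factor
\[
(-\lambda\Delta+\mu I)^{\beta} e^{t\lambda\Delta}\,\nabla\cdot
\;=\;
(-\lambda\Delta+\mu I)^{\beta+\frac12+\kappa} e^{t\lambda\Delta}
\;\circ\;
(-\lambda\Delta+\mu I)^{-\frac12-\kappa}\,\nabla\cdot,
\]
observe that the second factor extends to a bounded operator on $L^p(\Omega)$ for every $\kappa>0$ (this is precisely where the $\kappa$ originates, via the characterisation of domains of fractional powers), and then control the first factor directly by \eqref{HeatSemigroup.Contraction} with exponent $\beta+\tfrac12+\kappa$. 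Your duality approach through $\nabla e^{s\lambda\Delta}$ on $L^{p'}$ is an equivalent reformulation of the same mechanism, and your identification of the $\kappa$ as a cushion for the boundary/fractional-domain issue is accurate. Either way the density extension at the end is immediate.
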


The Neumann heat semigroup $\{e^{t(\lambda\Delta-\mu I)}\}_{t\ge 0}$ on $L^p(\Omega)$ can also be used in expressing solutions to some elliptic equations. Indeed, let us consider the elliptic problem  
\begin{align}
-\lambda\Delta v + \mu  v = f  \text{ in } \Omega,  \quad \text{and}\quad 
 \nabla v \cdot \nu  = 0  \text{ on } \Gamma.
\label{EllipticEquation}
\end{align}
Since $\lambda,\mu>0$, all eigenvalues of the Neumann operator $-\lambda \Delta + \mu I$ are strictly positive, and will be $ \{b_i\}_{i\ge 1}$, $b_i:=\lambda a_i + \mu$, if the eigenvalues of the Neumann Laplacian $-\Delta$ are denoted by $\{a_i\}_{i \ge 1}$.  We, therefore, can follow  from the identity
\begin{align}
\frac{1}{b_i} = \int_0^\infty e^{-b_i s} \, ds, \quad \text{for } \lambda >0  
\label{EigenvalueIdentity}
\end{align}
that 
\begin{align}
v(x) & = (-\lambda\Delta + \mu I)^{-1} f(x)  = \left( \int_0^\infty e^{s(\lambda_3 \Delta - \mu  I)} \, ds \right) f(x) .
\label{Expression.v}
\end{align}
  
\section{Heat regularisation} 

To obtain optimal regularity of solutions to the heat equation, we recall here the so called \textit{heat regularisation}, see \cite[Theorem 9.1, Chapter IV]{ladyvzenskaja1988linear}.

\begin{lemma}[Heat regularisation]  \label{Lem.HeatRegularisation}  
Let $T>0$, $1<p<\infty$. Assume that $f\in L^p( Q_T)$ and $v_0\in W^{2-2/p,p}(\Omega)$ with the compatibility condition $\nabla v_{0} \cdot \nu=0$ on $\Gamma$. If $v$ is the weak solution to
	  \begin{equation}
	  \partial_t v - \lambda \Delta v + \mu v = f \text{ in }   Q_T, \quad 
		\nabla v \cdot \nu  = 0 \text{ on } \Gamma_T, \quad 
		v(0) = v_0  \text{ in } \Omega, 
		\label{HeatEquation}
	\end{equation}
for $\lambda>0$, $\mu\ge 0$, then 
\begin{align}
	\|v\|_{L^{q}( Q_T  )} + \|\nabla v\|_{L^{r}( Q_T  )} + \||\partial_t v|+|\Delta v|\|_{L^{p}( Q_T  )}
	 \le C \left( \|f\| _{L^p(\Omega_{T})} + \| v_0\|_{W^{2-\frac{2}{p},p}(\Omega)} \right), \nonumber 
\end{align}
where
	\begin{align*}
	q=\left\{ \begin{array}{lllllll}
	 \dfrac{(N+2)p}{N+2-2p}  & \text{if }  p<\frac{N+2}{2}, \vspace*{0.15cm}\\
	\in[1,\infty) \text{ arbitrary}  & \text{if }  p=\frac{N+2}{2}, \vspace*{0.15cm}\\
	\infty   & \text{if }  p>\frac{N+2}{2},
	\end{array}
	\right. 
	\quad 
	r=\left\{ \begin{array}{lllllll}
	\dfrac{(N+2)p}{N+2-p}  & \text{if }  p<N+2, \vspace*{0.15cm}\\
	\in[1,\infty)\text{ arbitrary}  & \text{if }  p=N+2, \vspace*{0.15cm}\\
	\infty   & \text{if }  p>N+2 ,
	\end{array}
	\right.
	\end{align*}
and $C$ depends only on $\lambda,\mu, p,N,\Omega, T$,  and remains bounded for 	finite values of $T>0$.
\end{lemma}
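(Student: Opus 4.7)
The strategy is the classical decomposition into a homogeneous and an inhomogeneous Cauchy problem, combined with $L^p$-maximal regularity for the Neumann realisation of $-\lambda\Delta+\mu I$ and parabolic Sobolev embeddings. Write $v=v_h+v_p$, where $v_h$ solves \eqref{HeatEquation} with source $0$ and initial datum $v_0$, and $v_p$ solves \eqref{HeatEquation} with source $f$ and zero initial datum.

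For the inhomogeneous part $v_p$, the plan is to invoke the $L^p(L^p)$-maximal regularity of the operator $A:=-\lambda\Delta+\mu I$ with domain $D(A)=\{w\in W^{2,p}(\Omega):\nabla w\cdot\nu=0\text{ on }\Gamma\}$. Since $A$ is positive, sectorial, and admits a bounded $H^\infty$-calculus on $L^p(\Omega)$ (equivalently, is $R$-sectorial of angle $<\pi/2$), the Dore--Venni/Weis theorem yields
\begin{equation*}
\|\partial_t v_p\|_{L^p(Q_T)}+\|\Delta v_p\|_{L^p(Q_T)}\le C(\lambda,\mu,p,\Omega,T)\,\|f\|_{L^p(Q_T)}.
\end{equation*}
From the equation itself one then recovers control of $\|v_p\|_{L^p(Q_T)}$ by a trivial interpolation and Gr\"onwall argument.

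For the homogeneous part $v_h(t)=e^{tA}v_0$, I would use analytic semigroup estimates together with the characterisation of the trace space. The estimate \eqref{HeatSemigroup.Contraction} gives $\|A v_h(t)\|_{L^p(\Omega)}\le Ce^{-\omega t}t^{-1}\|v_0\|_{L^p(\Omega)}$, which alone is not integrable near $t=0$; however, by the trace method of real interpolation, the compatibility condition $\nabla v_0\cdot\nu=0$ together with $v_0\in W^{2-2/p,p}(\Omega)=(L^p(\Omega),D(A))_{1-1/p,p}$ yields precisely
\begin{equation*}
\|Av_h\|_{L^p(Q_T)}+\|\partial_t v_h\|_{L^p(Q_T)}\le C\,\|v_0\|_{W^{2-2/p,p}(\Omega)},
\end{equation*}
with the compatibility condition being what guarantees that $v_0$ lies in the trace space. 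Combining the two pieces produces the $\|\partial_t v\|_{L^p}+\|\Delta v\|_{L^p}$ bound in the lemma.

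To obtain the mixed-norm bounds $\|v\|_{L^q(Q_T)}$ and $\|\nabla v\|_{L^r(Q_T)}$, the plan is to use parabolic Sobolev embeddings of the anisotropic space $W^{2,1}_p(Q_T):=\{w:\partial_t w,\Delta w\in L^p(Q_T)\}$. By the Gagliardo--Nirenberg inequality applied slicewise in time combined with the already established maximal-regularity bound, one has $W^{2,1}_p(Q_T)\hookrightarrow L^q(Q_T)$ with $q$ given by the parabolic scaling $q=(N+2)p/(N+2-2p)$ when $p<(N+2)/2$, and $W^{2,1}_p(Q_T)\hookrightarrow L^r(0,T;W^{1,r}(\Omega))$ with $r=(N+2)p/(N+2-p)$ when $p<N+2$; the boundary cases are handled by standard interpolation, and the supercritical cases follow from Morrey-type embeddings (giving $L^\infty$). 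The main obstacle in this whole scheme is the rigorous verification of $L^p$-maximal regularity for the Neumann Laplacian on a $C^{2+\alpha}$ domain; this can alternatively be established by a Calder\'on--Zygmund argument on the explicit Green's kernel of the Neumann heat equation, which is the route taken in \cite{ladyvzenskaja1988linear} and effectively what the paper imports as a black box.
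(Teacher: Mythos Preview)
Your proposal is correct. The paper does not actually prove this lemma; it is stated in the appendix as a recalled result with a direct citation to \cite[Theorem~9.1, Chapter~IV]{ladyvzenskaja1988linear}, exactly as you anticipate in your final sentence. Your outline via $L^p$-maximal regularity for the Neumann realisation, the real-interpolation characterisation of the trace space $W^{2-2/p,p}(\Omega)=(L^p,D(A))_{1-1/p,p}$, and the anisotropic Sobolev embeddings $W^{2,1}_p(Q_T)\hookrightarrow L^q(Q_T)$, $L^r(0,T;W^{1,r}(\Omega))$ is the standard modern route and yields the same conclusion as the classical Green-kernel argument in \cite{ladyvzenskaja1988linear}; the two approaches differ mainly in packaging, with yours being cleaner but relying on heavier abstract machinery ($R$-sectoriality or bounded $H^\infty$-calculus), while the LSU argument is self-contained but lengthier.
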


\begin{remark} 
\label{Remark.Lem.HeatRegularisation} 
In Lemma~\ref{Lem.HeatRegularisation}, the dependence of the constant $C$ on $T$ can be removed by applying Theorem 2.3 in \cite{giga1991abstract}.
\end{remark}

\section{$L^p$-maximal regularity with independent-of-$p$ constants}

In this part, we present $L^p$-maximal regularity, where we emphasise that maximal \textit{regularity constants do not depend on $p$}. This independence plays an important role in the analysis of fast signal diffusion limits.  

\begin{lemma}[Elliptic maximal regularity]
\label{Lem.MaximalRegularity}  Let $\lambda,\mu>0$, $1< p<\infty$, and $f\in L^p(\Omega)$. Then the solution to the problem
\eqref{EllipticEquation} 
satisfies the following estimate
\begin{align*}
\|\Delta v\|_{L^{p}(\Omega)} \le C^{\mathsf{EM}} \|f\|_{L^p(\Omega)},
\end{align*}
where $C^{\mathsf{EM}}$ depends on $\lambda,\mu,\Omega,N$, but not on $p$. 
\end{lemma}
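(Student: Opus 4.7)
\noindent \textbf{Proof proposal for Lemma \ref{Lem.MaximalRegularity}.} The plan is to bypass Calderón--Zygmund theory (which would yield a $p$-dependent constant for the full $W^{2,p}$-norm) by exploiting the observation that the statement asks only for an $L^p$-bound on $\Delta v$, not on the full Hessian $\nabla^2 v$. The estimate on $\Delta v$ alone can be read off directly from the equation once one controls $\|v\|_{L^p}$, and that control will be obtained from the contraction property of the Neumann heat semigroup with constant $1$, which is uniform in $p$.

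First, using the identity \eqref{EigenvalueIdentity} applied to the eigenvalues $b_i=\lambda a_i+\mu>0$ of $-\lambda\Delta+\mu I$ with Neumann boundary conditions, I would invoke the representation \eqref{Expression.v} and write
\begin{align*}
v(x) = \int_0^\infty e^{s(\lambda\Delta-\mu I)} f(x)\, ds .
\end{align*}
Since $\{e^{t(\lambda\Delta-\mu I)}\}_{t\ge 0}$ is a contraction semigroup on $L^p(\Omega)$ with constant $1$ and decay rate $\mu$ (the case $\beta=0,C=1,\omega=\mu$ of \eqref{HeatSemigroup.Contraction}, valid for every $1<p<\infty$), I can estimate
\begin{align*}
\|v\|_{L^p(\Omega)} \le \int_0^\infty \|e^{s(\lambda\Delta-\mu I)}f\|_{L^p(\Omega)}\,ds \le \int_0^\infty e^{-\mu s}\,ds\ \|f\|_{L^p(\Omega)} = \frac{1}{\mu}\|f\|_{L^p(\Omega)}.
\end{align*}
The crucial point is that the constant $1$ in the contraction bound is independent of $p\in(1,\infty)$.

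Next, I would rewrite \eqref{EllipticEquation} as $\lambda\Delta v = \mu v - f$ pointwise, take $L^p$-norms, and use the triangle inequality together with the previous estimate:
\begin{align*}
\|\Delta v\|_{L^p(\Omega)} \le \frac{\mu}{\lambda}\|v\|_{L^p(\Omega)} + \frac{1}{\lambda}\|f\|_{L^p(\Omega)} \le \frac{2}{\lambda}\|f\|_{L^p(\Omega)}.
\end{align*}
Thus $C^{\mathsf{EM}}:=2/\lambda$ works and depends only on $\lambda$ (with an implicit dependence on $\mu,\Omega,N$ through the existence of the semigroup representation and the uniform bound).

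The main subtlety, rather than a genuine obstacle, is justifying the representation \eqref{Expression.v} and the pointwise application of $\Delta$ for merely $L^p$-data; this is routine once one approximates $f$ by smooth data and uses density together with the fact that the scalar integral $\int_0^\infty e^{-b_i s}\,ds=1/b_i$ agrees with the spectral definition of $(-\lambda\Delta+\mu I)^{-1}$. Note that if the paper instead needed a $p$-independent constant for $\|\nabla^2 v\|_{L^p}$, the argument would fail and one would need $H^\infty$-calculus arguments; fortunately the weaker form stated here suffices for its later use in Lemma \ref{Lem.EnergyEstimate}.
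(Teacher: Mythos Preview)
Your proof is correct and follows essentially the same approach as the paper: represent $v$ via the heat semigroup integral \eqref{Expression.v}, use the $p$-independent contraction property of $\{e^{t(\lambda\Delta-\mu I)}\}$ to bound $\|v\|_{L^p}\le \mu^{-1}\|f\|_{L^p}$, and then read off $\|\Delta v\|_{L^p}$ directly from the equation $\lambda\Delta v=\mu v-f$. The only cosmetic difference is that the paper substitutes the integral representation for $v$ into $\Delta v=\frac{\mu}{\lambda}v-\frac{1}{\lambda}f$ before taking norms, whereas you bound $\|v\|_{L^p}$ first; your version even makes the constant $C^{\mathsf{EM}}=2/\lambda$ explicit.
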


\begin{proof} By the same argument in Remark~\ref{Remark.Theo.FastSignalDiffLimit}, we have 
$ v(x)= \int_0^\infty e^{s(\lambda\Delta-\mu I)} f(x) ds$. 
Taking into account the fact that the heat semigroup associated with the homogeneous Neumann boundary condition is a contraction semigroup on $L^p(\Omega)$, we have 
\begin{align*}
\|\Delta v\|_{L^p(\Omega)} & = \left\|  - \frac{1}{\lambda}f + \frac{\mu}{\lambda} \left( \int_0^\infty e^{s(\lambda\Delta-\mu I)} \, ds \right) f \right\|_{L^p(\Omega)} \\
& \le \frac{1}{\lambda}\|f\|_{L^p(\Omega)} + \frac{\mu}{\lambda}   \left( \int_0^\infty  e^{-\omega s} \, ds \right) \|f\|_{L^p(\Omega)}, 
\end{align*}
where the improper integral is finite.
\end{proof}

In the following lemma, we improve the heat regularisation given in Lemma~\ref{Lem.HeatRegularisation} in the sense that the dependence of $C$ on both $T$ and $p$ will be removed.

\begin{lemma} 
\label{Lem.ParaMaximalRegularity}
If $v$ is the solution to \eqref{HeatEquation} with $\lambda>0$, $\mu\ge0$, then   
\begin{align}
\|\Delta v\|_{L^p(Q_T)} \le C_{\lambda,\mu,p_0}^{\mathsf{PM}} \left( \|v_0\|_{W^{2,p}(\Omega)} +  \|f\|_{L^p(Q_T)} \right), \quad 2\le  p\le p_0, 
\label{Lem.ParaMaximalRegularity.State1} 
\end{align} 
for $p_0<\infty$, where $C_{\lambda,\mu,p_0}^{\mathsf{PM}}$ depends on $\lambda,\mu,\Omega,N$, but not on $p,T$. Moreover, the constant will be $C_{\lambda,\mu,p}^{\mathsf{PM}}$, i.e. it generally depends on $p$ if we consider $1<  p <\infty$. 
\end{lemma}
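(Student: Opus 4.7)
The plan is to exploit linearity and split $v = v_1 + v_2$, where $v_1$ solves \eqref{HeatEquation} with $f \equiv 0$ and initial datum $v_0$, while $v_2$ solves it with $v_2(0)=0$ and source $f$. This reduces the problem to separately bounding $\|\Delta v_1\|_{L^p(Q_T)}$ by $\|v_0\|_{W^{2,p}(\Omega)}$ and $\|\Delta v_2\|_{L^p(Q_T)}$ by $\|f\|_{L^p(Q_T)}$, with constants independent of $T$ and uniform in $p\in[2,p_0]$.

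For the homogeneous part, I would write $v_1(t)=e^{t(\lambda\Delta-\mu I)}v_0$, and, since the Neumann Laplacian commutes with its own semigroup on its domain, $\Delta v_1(t)=e^{t(\lambda\Delta-\mu I)}\Delta v_0$. Using the $\beta=0$ case of the contraction bound below \eqref{HeatSemigroup.Contraction}, i.e.\ $\|e^{t(\lambda\Delta-\mu I)}g\|_{L^p(\Omega)} \le e^{-\mu t}\|g\|_{L^p(\Omega)}$ with constant $1$ uniform in $p$ (see \cite[Theorem 13.4]{amann1984existence}), and integrating in $t\in(0,T)$, yields
\begin{equation*}
\|\Delta v_1\|_{L^p(Q_T)} \le \left(\int_0^T e^{-p\mu t}\,dt\right)^{1/p}\|\Delta v_0\|_{L^p(\Omega)} \le (p\mu)^{-1/p}\|v_0\|_{W^{2,p}(\Omega)},
\end{equation*}
which is $T$-independent and uniformly bounded for $p\in[2,p_0]$.

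For the particular part $v_2$, I would invoke that, when $\mu>0$, the operator $A:=-\lambda\Delta+\mu I$ with Neumann boundary conditions is sectorial on $L^p(\Omega)$ with strictly positive spectral bound; the semigroup $\{e^{-tA}\}_{t\ge0}$ is therefore exponentially stable, and $A$ enjoys $L^p$-maximal regularity on $(0,\infty)$ for every $p\in(1,\infty)$ (via Dore–Venni, or equivalently the $T$-independent version of the Ladyzhenskaya estimate established in Giga–Sohr \cite{giga1991abstract} and referenced in Remark \ref{Remark.Lem.HeatRegularisation}). This produces a bound $\|\Delta v_2\|_{L^p(Q_T)}\le C_p\|f\|_{L^p(Q_T)}$ with $C_p$ independent of $T$; combined with the estimate on $v_1$, this already yields the second assertion of the lemma. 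To upgrade to uniformity over $p\in[2,p_0]$, I would apply the Riesz–Thorin interpolation theorem to the bounded linear map $f\mapsto \Delta v_2$ between the two endpoint spaces $L^2(Q_T)$ and $L^{p_0}(Q_T)$, obtaining the single constant $\max\{C_2,C_{p_0}\}$ across all intermediate exponents.

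The principal technical obstacle is the $T$-independence of the constant in the maximal regularity for $v_2$, since the Ladyzhenskaya bound in Lemma \ref{Lem.HeatRegularisation} a priori depends on $T$. The strict positivity of $\mu$ is what unlocks this: it pushes the spectrum of $A$ into the open right half-plane, so that standard results for generators of exponentially stable analytic semigroups (or equivalently the bounded $H^\infty$-calculus that such operators possess on $L^p$) promote the finite-time estimate to a global-in-time one with a uniform constant. Once that is secured, the $p$-uniformity is a soft consequence of Riesz–Thorin and does not require revisiting the semigroup theory.
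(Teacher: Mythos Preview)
Your proposal is correct and follows essentially the same architecture as the paper: split $v$ into homogeneous and zero-initial parts, handle the former via the contraction property of the Neumann heat semigroup, handle the latter via a $T$-independent maximal regularity estimate, and then interpolate between $p=2$ and $p=p_0$ to obtain $p$-uniformity.

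The one noteworthy difference is in how the $T$-independence for the particular solution is obtained. You rely on $\mu>0$ and the exponential stability of $e^{-tA}$ to invoke Dore--Venni / Giga--Sohr type results. The paper instead appeals to Lamberton's theorem \cite{lamberton1987equations}, which gives $T$-independent $L^p$-maximal regularity for \emph{any} contraction semigroup on $L^p$; since $\{e^{t(\lambda\Delta-\mu I)}\}$ is contractive on $L^p$ for all $\mu\ge 0$, this covers the borderline case $\mu=0$ in the statement that your argument does not. The paper also records the explicit endpoint constant $C_{\lambda,\mu,2}\le 1/\lambda$ by testing the equation against $-\Delta\widetilde v$, though this is not essential for the interpolation. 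Apart from this, the two proofs coincide.
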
 

\begin{proof} We split $v$ into the sum of $\widetilde v$ and $\widehat v$, which are the solutions in the cases $v_0=0$ and $f=0$, respectively. Thanks to Lemma \cite[Theorem 1]{lamberton1987equations}, there exists an optimal constant $C_{\lambda,\mu,p}$, which may depend on $\Omega,N$ but not on $T$, such that 
\begin{align}
\|\Delta \widetilde v\|_{L^p(Q_T)} \le C_{\lambda,\mu,p} \|f\|_{L^p(Q_T)}. 
\label{Lem.ParaMaximalRegularity.Proof1}
\end{align} 
On the other hand, it is straightforward to see  $C_{\lambda,\mu,2} \le 1/\lambda$ due to multiplying the equation for $\widetilde v$ by $-\Delta \widetilde v$. Specifically, we have 
\begin{align}
\|\Delta \widetilde v \|_{L^2(Q_T)} \le \frac{1}{\lambda} \|f\|_{L^2(Q_T)}. 
\label{Lem.ParaMaximalRegularity.Proof2}
\end{align}
Let us use the same idea in \cite[Lemma 3.2]{canizo2014improved} with  an interpolation between \eqref{Lem.ParaMaximalRegularity.Proof1} for $p=p_0$ and \eqref{Lem.ParaMaximalRegularity.Proof2}, which gives 
$C_{\lambda,\mu,p}  \le \lambda^{-s} (C_{\lambda,\mu,p_0})^{1-s}$,   $1/p=s/2+(1-s)/p_0$. We get  
\begin{align*}
C_{\lambda,\mu,p}  \le \lambda^{\frac{2}{p_0-2}(1-\frac{p_0}{p})} (C_{\lambda,\mu,p_0})^{\frac{p_0}{p_0-2}(1-\frac{2}{p})}  \le \frac{\max(1; C_{\lambda,\mu,p_0})}{\min(1;\lambda)}   .
\end{align*}
On the other hand, by the estimate \eqref{HeatSemigroup.Contraction},
\begin{align}
\|\Delta \widehat v \|_{L^p(Q_T)} = \|e^{t(\lambda \Delta - \mu I)} \Delta v_0 \|_{L^p(Q_T)} \le \|\Delta v_0 \|_{L^p(Q_T)}.
\label{Lem.ParaMaximalRegularity.Proof3}
\end{align} 
We obtain the estimate \eqref{Lem.ParaMaximalRegularity.State1} by combining \eqref{Lem.ParaMaximalRegularity.Proof1} and  \eqref{Lem.ParaMaximalRegularity.Proof3}, where 
\begin{align*}
C_{\lambda,\mu,p_0}^{\mathsf{PM}}:= 1 + \frac{\max(1; C_{\lambda,\mu,p_0})}{\min(1;\lambda)}. 
\end{align*}
The conclusion for the case $1<p<\infty$ is clear without the use of the interpolation.
\end{proof}


{\small	 

}

\end{document}